\newtheoremstyle{mythm}
   {10pt}                   
   {10pt}                   
   {}          		    
   {}                      
   {\normalfont\bfseries}  
   {}                      
   {\newline}              
   {\textbf{\thmname{#1} \thmnumber{#2} \thmnote{(#3)}}} 
\newtheoremstyle{mythm2}
   {10pt}                   
   {10pt}                   
   {\itshape}          		    
   {}                      
   {\normalfont\bfseries}  
   {}                      
   {\newline}              
   {\textbf{\thmname{#1} \thmnumber{#2} \thmnote{(#3)}}} 
\newtheoremstyle{myex}
  {10pt}                   
   {10pt}                   
   {}          		    
   {}                      
   {\normalfont\bfseries}  
   {}                      
   {\newline}                
   {\textbf{\thmname{#1} \thmnumber{#2} \thmnote{(#3)}}} 
\theoremstyle{myex}
\newtheorem*{XxmpX}{Example} 
\newenvironment{ex}    
  {%
   \pushQED{\qed}\begin{XxmpX}}
  {\popQED\end{XxmpX}}
\theoremstyle{mythm2}
\newtheorem{thm}{Theorem}[section]
\theoremstyle{mythm}
\newtheorem{lem}[thm]{Lemma}
\newtheorem{dfn}[thm]{Definition}
\newtheorem{cor}[thm]{Corollary}
\newtheorem{Rem}[thm]{Remark} 
\newenvironment{rem}    
  {%
   \pushQED{\qed}\begin{Rem}}
  {\popQED\end{Rem}}
\newcommand{\subalign}[1]{%
  \vcenter{%
    \Let@ \restore@math@cr \default@tag
    \baselineskip\fontdimen10 \scriptfont\tw@
    \advance\baselineskip\fontdimen12 \scriptfont\tw@
    \lineskip\thr@@\fontdimen8 \scriptfont\thr@@
    \lineskiplimit\lineskip
    \ialign{\hfil$\m@th\scriptstyle##$&$\m@th\scriptstyle{}##$\crcr
      #1\crcr
    }%
  }
}
\DeclareMathOperator{\im}{Im}
\DeclareMathOperator{\re}{Re}
\DeclareMathOperator{\argmin}{argmin}
\DeclareMathOperator{\median}{median}
\DeclareMathOperator{\lcm}{lcm}
\newcommand{\eps}{\varepsilon}
\newcommand{\intvl}{\left(-\lceil N/2\rceil, \lfloor N/2\rfloor\right]\cap\mathbb{Z}}
\newcommand{\Intvl}{\left(-\left\lceil \frac{N}{2}\right\rceil, \left\lfloor \frac{N}{2}\right\rfloor\right]\cap\mathbb{Z}}
\newcommand{\intu}{-\left\lceil\frac{N}{2}\right\rceil+1}
\newcommand{\into}{\left\lfloor\frac{N}{2}\right\rfloor}
\newcommand{\mods}{\,\bmod\,}
\newcommand{\CC}{\mathbb{C}}
\newcommand{\NN}{\mathbb{N}}
\newcommand{\ZZ}{\mathbb{Z}}
\newcommand{\RRR}{\mathcal{R}_{L,K}}
\newcommand{\GGG}{\mathcal{G}_{L,K}}
\newcommand{\EEE}{\mathcal{E}_{K}}
\newcommand{\Ms}{\mathcal{M}_{s_1,K}}
\newcommand{\Mu}{\mathcal{M}_{u_1,M}}
\newcommand{\Nt}{\mathcal{N}_{t_1,L}}
\newcommand{\Mt}{\mathcal{M}_{t_1,L}}
\newcommand{\cast}{\circledast}
\newcommand{\x}{\mathbf{x}}
\newcommand{\F}{\mathbf{F}}
\newcommand{\f}{\mathbf{f}}
\newcommand{\cc}{\mathbf{c}}
\newcommand{\opt}{^{\mathrm{opt}}}
\newcommand{\mn}{^{(m,\nu)}}
\newcommand{\A}{\mathbf{A}}
\newcommand{\B}{\mathbf{B}}
\newcommand{\HHH}{\mathcal{H}_{M}}
\newcommand{\bfrho}{\boldsymbol{\rho}}
\newcommand{\rr}{\mathbf{r}}
\newcommand{\BIGOP}[1]{\mathop{\mathchoice%
{\raise-0.22em\hbox{\huge $#1$}}%
{\raise-0.05em\hbox{\Large $#1$}}{\hbox{\large $#1$}}{#1}}}
\begin{document}
\title{A Deterministic Sparse FFT for Functions with Structured Fourier Sparsity}
\author{Sina Bittens\thanks{University of Göttingen, Institute for Numerical and Applied Mathematics, Lotzestr.\ 16-18, 37083 Göttingen, Germany (sina.bittens@mathematik.uni-goettingen.de, +49 551 394515).}, Ruochuan Zhang\thanks{Department of Mathematics, Michigan State University, East Lansing, MI, 48824, USA (zhangr12@msu.edu).}, Mark A. Iwen\thanks{Department of Mathematics, and Department of Computational Mathematics, Science, and Engineering (CMSE), Michigan State University, East Lansing, MI, 48824, USA (markiwen@math.msu.edu).}}
\date{\today}
\maketitle

\includecomment{paper}
\excludecomment{diss}
\begin{abstract}
In this paper a deterministic sparse Fourier transform algorithm is presented which breaks the quadratic-in-sparsity runtime bottleneck for a large class of periodic functions exhibiting structured frequency support.  These functions include, e.g., the oft-considered set of block frequency sparse functions of the form
$$f(x) = \sum^{n}_{j=1} \sum^{B-1}_{k=0} c_{\omega_j + k} e^{i(\omega_j + k)x},~~\{ \omega_1, \dots, \omega_n \} \subset \Intvl$$
as a simple subclass.  Theoretical error bounds in combination with numerical experiments demonstrate that the newly proposed algorithms are both fast and robust to noise.  In particular, they outperform standard sparse Fourier transforms in the rapid recovery of block frequency sparse functions of the type above.
\end{abstract}
\textbf{Keywords.} Sparse Fourier Transform (SFT), Structured Sparsity, Deterministic Constructions, Approximation Algorithms
\textbf{AMS Subject Classification.} 05-04, 42A10, 42A15, 42A16, 42A32, 65T40, 65T50, 68W25, 94A12 
\section{Introduction}\label{ch:intro}
In this paper we consider the problem of deterministically recovering a periodic function $f\colon[0, 2\pi] \rightarrow \mathbb{C}$ as rapidly as absolutely possible via sampling.  In particular, we focus on a specific set of functions $f$ whose dominant Fourier series coefficients are all associated with frequencies contained in a small number, $n$, of unknown structured support sets $S_1, \dots, S_n \subset \left( - \lceil N/2 \rceil, \lfloor N/2 \rfloor \right] \cap \mathbb{Z}$, where $N \in \mathbb{N}$ is very large.  In such cases the function $f$ will have the form
\begin{equation}
f(x) \approx \sum^n_{j=1} \sum_{\omega \in S_j} c_\omega e^{i \omega x},
\label{equ:Example_Problem}
\end{equation}
where each unknown $S_j$ has simplifying structure (e.g., has $|x - y| < B \ll N$ for all $x,y \in S_j$). 

The classical solution for this problem would be to compute the Discrete Fourier Transform (DFT) of $N$ equally spaced samples from $f$ on $[0, 2 \pi]$, $\left( f \left( \frac{2 \pi j}{N} \right) \right)^{N-1}_{j = 0}$, in order to obtain approximations of $c_\omega$ for all $\omega \in \left( - \lceil N/2 \rceil, \lfloor N/2 \rfloor \right] \cap \mathbb{Z}$ in $\mathcal{O}(N \log N)$-time.  Herein, we instead consider faster deterministic Sparse Fourier Transform (SFT) methods which are guaranteed to recover such $f$ using a number of samples and operations that scale at most polynomially in both $\sum^n_{j=1} |S_j|$ and $\log(N)$.  Such algorithms will always be faster than classical $\mathcal{O}(N \log (N))$-time methods whenever the cardinalities of the support sets, $|S_j|$, are sufficiently small in comparison to $N$.  The main contribution of this paper is the development of the fastest known deterministic SFT methods to date for the recovery of a large class of periodic functions of type \eqref{equ:Example_Problem}.  Such functions \eqref{equ:Example_Problem} will be referred to as functions with \textit{structured frequency support} below.

\subsection{Related Work:  Sparse Fourier Transforms}

The vast majority of the work on sparse Fourier transform methods has focused on the unstructured frequency sparse case where, e.g., each set $S_j$ in \eqref{equ:Example_Problem} is just a singleton set.  In this case functions of the form \eqref{equ:Example_Problem} are simply $n$-sparse in the Fourier domain.  The first sub-linear time methods developed for rapidly computing the Fourier series coefficients of such frequency sparse functions were randomized algorithms \cite{Man,GGIMS,AGS,GMS} which fail to output good solutions with some constant (and usually tunable) probability.  In exchange for this slight unreliability in producing accurate output, the fastest of these randomized techniques are able to compute the Fourier series of $n$-sparse $f$ in just $n \log^{\mathcal{O}(1)} N$-time.  The most efficient, numerically stable, and publicly available implementations of these methods are based on random algorithms developed out of MIT \cite{HIKP,HIKP2,IKP}, Michigan \cite{GGIMS,GMS,IGS}, and Michigan State  \cite{segal2013improved,christlieb2016multiscale,Ruochuan}.\footnote{The code for all of these implementations is freely available on the web \cite{sFFT,AAFFT}.}  We point the reader to a recent survey of such algorithms, techniques, and implementations for more details \cite{gilbert2014recent}.  

Herein we are interested in deterministic SFT methods with no probability of failing to recover the dominant Fourier series coefficients of $f$. As with randomized techniques, most methods of this kind (see, e.g., \cite{Iw,Ak,Iw-arxiv,morotti2016explicit,plonka_sparse}) focus on the recovery of functions $f$ that are unstructured and (approximately) $n$-sparse in the Fourier domain.  As one might expect, these techniques are generally slower than their randomized counterparts, and the fastest run in $n^2 \log^{\mathcal{O}(1)} N$ time in the unstructured frequency sparse case. Other deterministic SFT algorithms are based on Prony's method (see, e.g., \cite{heidersparse,plonka2014,potts_tasche_volkmer}) and achieve runtimes of $\mathcal{O}(n^3)$ for $n$-sparse input functions. However, Prony-based methods suffer from numerical instabilities for large bandwidths $N$ and noisy input data and thus are not suitable for all applications.   

Note the quadratic runtime of the not Prony-based deterministic methods in $n$.  It is worth mentioning that reducing the quadratic runtime dependence on $n$ for unstructured frequency sparse signals necessitates a similar reduction in the sampling complexity of these deterministic methods which (even when considered independently of the sub-linear runtimes we demand herein) is known to be notoriously difficult (see, e.g., \cite{bourgain2011explicit,foucart2013mathematical,cheraghchi2016nearly}).  This makes meaningful runtime reductions of these methods for periodic functions with unstructured sparsity quite unlikely to occur anytime soon.  However, runtime reductions for functions with structured frequency sparsity should be more tractable.  In this paper we demonstrate this fact by constructing deterministic algorithms which achieve sub-linear runtimes that scale sub-quadratically in sparsity for a wide class of functions with structured frequency support.

\subsection{A General Class of Functions with Structured Frequency Support}

Existing sparse Fourier transform techniques have been applied to many signal processing problems including, e.g., GPS signal acquisition \cite{HAKI}, analog-to-digital conversion \cite{laska2006random,yenduri2012low}, and wideband communication/spectrum sensing \cite{YG2012,HSAHK}.  In all of these applications the signals under consideration are generally manmade and, therefore, structured in Fourier space.  
Herein we will in particular focus on periodic functions $f$ whose dominant Fourier series coefficients are all associated with integer frequencies belonging to sets $S_1, \dots, S_n \subset \left( - \lceil N/2 \rceil, \lfloor N/2 \rfloor \right] \cap \mathbb{Z}$, each of which is generated by an unknown degree $\leq d$ polynomial $P_j \in \mathbb{Z}[x]$.  More specifically, we will assume that each set $S_j$ is given by
\begin{equation}
S_j := \left \{ P_j(k) ~:~ k=1,\dots,B'_j \in \mathbb{N} \right \},
\label{equ:Poly_Structured_Support}
\end{equation}
where $0 < B'_j \leq B \ll N$ always holds for some support set cardinality upper bound $B \in \mathbb{N}$.  Perhaps the simplest class of structured frequency sparse functions of this type are the \textit{block frequency sparse functions} for which each $P_j(x) = x + a_j$ for some $a_j \in \left( - \lceil N/2 \rceil, \lfloor N/2 \rfloor - B \right] \cap \mathbb{Z}$.

Though our main results will concern the relatively general setting where our frequency support sets are given by \eqref{equ:Poly_Structured_Support}, in what follows we will pay particular attention to the simpler class of block frequency sparse functions.  Related block Fourier sparse structures appear in many signal processing contexts including, e.g., the reconstruction of multiband signals via blind sub-Nyquist sampling \cite{fengbresler1996,eldar2008_eff_sampl_sparsewideband,eldar2009_blind_multiband_signal_rec,eldar2010_theorytopractice,eldar2011_xampling_analog_digital_subnyquist}.  This class of block Fourier sparse functions also appears in related numerical methods for the rapid approximation of functions which exhibit sparsity with respect to other orthonormal basis functions.  For example, one can rapidly approximate functions which are a sparse combination of high-degree Legendre polynomials by computing the DFT of samples from a related periodic function which is always guaranteed to be approximately block frequency sparse \cite{hu2015rapidly}.

The importance of block frequency sparse functions has already led several authors to consider deterministic sub-linear time Fourier transforms for this case.  Examples include several approaches which focus on the recovery of periodic functions whose frequency support is confined to just one block \cite{plonka_smallsupp,bittens2016,plonka_nonneg} or several blocks \cite{CKSZ17}.  Herein we significantly generalize these first block frequency sparse recovery results by developing new deterministic SFT methods which enjoy recovery guarantees for all structured frequency sparse periodic functions $f$ satisfying both \eqref{equ:Example_Problem} and~\eqref{equ:Poly_Structured_Support}.  In particular, the methods proposed herein can rapidly recover block frequency sparse functions whose frequency support contains any given number of blocks.

\subsection{Notation and Setup}
\label{sec:Intro_Notation}

We will always consider continuous $2 \pi$-periodic functions $f\colon[0,2\pi]\rightarrow\CC$ with $f(x) = \sum^n_{j=1} \sum_{\omega \in S_j} c_\omega(f) e^{i \omega x}$ where the unknown support sets $S_1, \dots, S_n$ all satisfy \eqref{equ:Poly_Structured_Support}.  We will denote the Fourier series coefficients of any such $f$ by $\cc(f)=\left(c_\omega(f)\right)_{\omega\in\ZZ}$ with
\[
 c_\omega(f):=\frac{1}{2\pi}\int\limits_0^{2\pi}f(x)e^{-i\omega x}dx.
\]
We will also consider perturbations of $f$ by arbitrary $2\pi$-periodic functions $\eta\in L^2([0,2\pi])$ whose Fourier series coefficients $\cc(\eta) \in \ell^1$ and also satisfy $\|\cc(\eta)\|_\infty\leq\eps$ for some $\eps > 0$. 

We will further denote by $\mathbf{c}(N)\in\CC^N$ the restriction of the sequence $\cc(f+\eta)$ to the frequencies contained in $\intvl$, and by $\cc(N,\ZZ)$ the embedding of $\cc(N)$ into $\CC^\ZZ$:
\[
 \left(\cc(N,\ZZ)\right)_\omega=\begin{cases}
                               c_\omega(f+\eta), &\quad \omega\in\intvl, \\
                               0, &\quad\text{otherwise.}
                              \end{cases}
\]
A Fourier coefficient $c_\omega:= c_\omega(f+\eta) \in\CC$ will be called \emph{significantly large} if $|c_\omega|>\eps$. Similarly, a frequency $\omega\in\ZZ$ is \emph{energetic} if its corresponding Fourier coefficient $c_\omega$ is significantly large.

For any vector $\mathbf{x}\in\CC^{|I|}$ with index set $I$, and subset $R \subseteq I$, we define the vector $\mathbf{x}_R\in\CC^{|I|}$ by
 \[
  (\mathbf{x}_R)_i=\begin{cases}
                    x_i, \quad\text{if } i\in R,\\
                    0, \quad\text{otherwise}
                   \end{cases}
 \]
 for all $i\in I$. Furthermore, we denote by $\mathbf{0}_M\in\CC^M$ the vector consisting of $M$ zeroes and by $\mathbf{1}_M\in\CC^M$ the  vector consisting of $M$ ones.
 
Finally, for any $s < |I|$ we will let the subset $R_s^{\text{opt}}\subset I$ be the, in lexicographical order, first $s$-element subset such that  $|x_j|\geq|x_k|$ for all $j \in R_s^{\text{opt}}$ and $k\in I\backslash R_s^{\text{opt}}$. Thus, $R_s^{\text{opt}}$ contains the indices of $s$ entries of $\x$ with the largest magnitudes. While choosing $s$ entries with the largest magnitude might not be unique, $R_s^{\text{opt}}$ is unique. To simplify notation we set $\x_s^{\text{opt}}:=\x_{R_s^{\text{opt}}}$.  We will also say, e.g., that $\left( \mathbf{c}(N) \right)_s^{\text{opt}} = \cc\opt_{s}(N)$.

Throughout the remainder of this paper we will always consider samples to be taken from $f+\eta$ so that we are recovering a function which is potentially both non-sparse in Fourier domain and unstructured in its dominant frequency support. However, if, for example, the nonzero Fourier coefficients of $f$ all satisfy $|c_\omega(f)|>2\eps$, then the structured frequency sparsity of $f$ guarantees that 
$$ \left\{ \omega ~:~ |c_\omega(f+\eta)| > \epsilon \right\} \cap R_{Bn}\opt(f+\eta) \subseteq S:= \bigcup^n_{j=1} S_j.$$ 
 It is exactly this type of consideration which will allow us to obtain near-optimal best $Bn$-term approximation guarantees for $f + \eta$ via our deterministic SFT methods below.

\subsection{Results}\label{ch:intro_results}

As previously mentioned, we will confine our reconstruction results to the class of periodic functions, $f$, with structured frequency support satisfying both \eqref{equ:Example_Problem} and~\eqref{equ:Poly_Structured_Support} above.  Momentarily ignoring the structure of the support sets, $S_j$, given in \eqref{equ:Poly_Structured_Support} one can see that each such $f$ is approximately $Bn$-sparse.  As a result, it can be recovered in $B^2 n^2 \log^{\mathcal{O}(1)} N$-time using the best deterministic SFTs for unstructured sparsity \cite{Iw,Iw-arxiv}.  Herein we obtain the following improved deterministic recovery result by taking the structure of the support sets \eqref{equ:Poly_Structured_Support} into account.  It is a simplified corollary of Theorem~\ref{thm:main} in \S\ref{sec:GeneralAlgResults}.

\begin{thm}\label{thm:main_intro}
Let $f, \eta \in L^2([0,2\pi])$ be as in \S\ref{sec:Intro_Notation}, where $n$ is the number of polynomials of degree at most $d$ which are evaluated at most $B$ times to obtain the energetic frequencies in $S_1, \dots, S_n$.  In addition, assume for simplicity that $\cc_\omega(f+\eta) = 0$ for all $\omega \notin \intvl$.  In this case Algorithm~\ref{alg:fourierapprox} below is guaranteed to always return a sparse $N$-length vector $\x_R$ of Fourier coefficient estimates that satisfies 
\begin{equation}
 \|\cc(N)- {\x_R}\|_2\leq\left\|\cc(N)-\cc\opt_{Bn}(N)\right\|_2+\sqrt{B}\left(\eps+\frac{3}{d\sqrt{n}}\left\|\cc(N)-\cc\opt_{2Bn}(N)\right\|_1\right)
\label{equ:Main_thm_Error_Bound}
\end{equation}
when given access to 
\[
 \mathcal{O}\left(\frac{Bd^2n^3\log^5N}{\log B\log^2(dn)}\right)
\]
samples from $f+\eta$ on $[0, 2\pi]$.  Furthermore, the runtime of Algorithm~\ref{alg:fourierapprox} is always
\[
 \mathcal{O}\left(\frac{Bd^2n^3\log^5N}{\log^2(dn)}\right).
\]
\end{thm}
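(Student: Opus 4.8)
The plan is to build Algorithm~\ref{alg:fourierapprox} as a two-stage procedure: first identify the unknown polynomials $P_1, \dots, P_n$ (and hence the structured support sets $S_j$) from a carefully designed set of samples, and then estimate the Fourier coefficients $c_\omega$ for $\omega$ in the recovered support. The key structural insight to exploit is that each $S_j = \{P_j(k) : k = 1, \dots, B'_j\}$ is the image of a low-degree polynomial, so the frequencies within a single block are not independent unknowns but are determined by at most $d+1$ coefficients of $P_j$. First I would use the standard SFT device of restricting $f+\eta$ to several arithmetic progressions modulo distinct primes $p_1, \dots, p_L$: sampling $f+\eta$ at the points $\frac{2\pi m}{p_\ell}$ and taking a length-$p_\ell$ DFT produces the aliased coefficients $\sum_{\omega \equiv r \,(\mathrm{mod}\, p_\ell)} c_\omega$, which lets one read off residues $\omega \bmod p_\ell$ of energetic frequencies by thresholding at $\eps$. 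The number of primes needed to uniquely reconstruct all residues via the CRT, together with the degree-$d$ polynomial structure, is what drives the $d^2 n^3 \log^5 N / (\log B \log^2(dn))$ sampling bound.

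Next I would handle the central difficulty, which is collision management across blocks. When two or more of the support sets $S_j$ share a residue modulo some prime $p_\ell$, the corresponding aliased bin sums up several genuine coefficients and corrupts the identification. The standard fix is to choose enough primes $p_\ell$ (each of size roughly $\mathcal{O}(Bn \log N / \log(Bn))$ or so) that every energetic frequency is isolated in its own bin for a strict majority of the primes; a median or voting argument across primes then recovers each frequency's residue correctly despite collisions. I expect the main obstacle to lie precisely here: one must prove that the polynomial structure \eqref{equ:Poly_Structured_Support} lets us identify entire blocks $S_j$ from partial residue information far more cheaply than identifying $Bn$ arbitrary frequencies one at a time — specifically that recovering the $\mathcal{O}(dn)$ polynomial coefficients, rather than $\mathcal{O}(Bn)$ raw frequencies, is what produces the $\log B$ savings in the denominator and the sub-quadratic-in-$B$ scaling. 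Establishing this requires showing that a small number of sample evaluations per block suffices to fit each $P_j$ (e.g.\ by interpolation through $d+1$ recovered points) and that these fits are robust to the $\eps$-level perturbation from $\eta$.

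For the error bound \eqref{equ:Main_thm_Error_Bound}, I would argue that once the correct support $R \subseteq S$ of size at most $Bn$ is identified, the coefficient estimates $x_\omega$ are computed as (medians of) the aliased bin values, each of which equals the true $c_\omega$ plus a contribution from the tail $\cc(N) - \cc\opt_{Bn}(N)$ and the noise floor $\eps$. The term $\|\cc(N) - \cc\opt_{Bn}(N)\|_2$ arises as the unavoidable best-$Bn$-term approximation error for the entries we miss or mis-estimate, while the $\sqrt{B}$ factor multiplying the parenthetical comes from summing the per-frequency errors over the up-to-$B$ frequencies in a block and applying a Cauchy--Schwarz / $\ell^1$-to-$\ell^2$ conversion. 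The $\frac{3}{d\sqrt{n}}\|\cc(N) - \cc\opt_{2Bn}(N)\|_1$ term should emerge from the averaging effect of the median estimator over the many primes, with the $2Bn$ rather than $Bn$ reflecting that we must control energy outside the isolation set. I would close the runtime claim by tallying the cost as $L$ length-$p_\ell$ FFTs plus the CRT reconstruction and polynomial fitting, each of which is polylogarithmic per energetic frequency, so that dropping the $\log B$ factor (present in sampling but absorbed into the FFT cost) yields the stated $\mathcal{O}\!\left(Bd^2n^3\log^5 N / \log^2(dn)\right)$ runtime.
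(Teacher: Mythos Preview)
Your proposal misses the paper's central mechanism and substitutes a strategy that, as stated, would not achieve the claimed bounds.

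You propose to \emph{recover the polynomials $P_j$ themselves} by first isolating individual energetic frequencies via aliasing modulo primes $p_\ell$ of size roughly $Bn\log N/\log(Bn)$, then interpolating each $P_j$ through $d+1$ of its recovered values. There are two concrete problems. First, the isolation step you describe is exactly the standard deterministic SFT for unstructured $Bn$-sparsity: requiring a majority of primes to isolate each of $Bn$ frequencies forces primes of size $\Theta(Bn\log N/\log(Bn))$ and $\Theta(Bn\log N/\log(Bn))$ of them, which already gives the $B^2n^2\log^{\mathcal O(1)}N$ runtime you are trying to beat. Nothing in the interpolation stage can undo this cost. Second, even granting cheap frequency recovery, you offer no mechanism for \emph{assigning} recovered frequencies to polynomials before interpolating---you would need to know which $d+1$ of the $Bn$ recovered points lie on the same unknown $P_j$, a clustering problem you do not address.

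The paper's idea is entirely different and never touches the polynomials. It chooses auxiliary primes $u_1,\dots,u_M$ all exceeding $B$, and uses the polynomial structure only through Lemma~\ref{lem:wellhashed}: if $u_m$ does not divide the leading coefficient of $P_j$, then at most $d$ elements of $S_j$ fall into any fixed residue class modulo $u_m$. Hence the restriction of $\cc(N)$ to $\{\omega\equiv\nu\bmod u_m\}$ is at most $dn$-sparse (not $Bn$-sparse), and one applies the existing deterministic SFT with sparsity parameter $dn$ to each of the $u_m$ residue classes. The total cost is then roughly $u_m\cdot(dn)^2\cdot\mathrm{polylog}\,N\approx B\,d^2n^2\,\mathrm{polylog}\,N$ per hashing prime, times $M=\mathcal O(n\log N/\log B)$ hashing primes (needed because we cannot tell in advance which $u_m$ avoid all leading coefficients; a median over $m$ fixes this). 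The $\log B$ in the denominator of the sample count comes from $M\sim n\log_{u_1}N$ with $u_1>B$, not from any savings in identifying polynomial coefficients. Your explanation of the $\sqrt{B}$ in the error bound is also off: it arises as $\sqrt{Bn}$ from summing squared per-coefficient errors over the $|R|\le Bn$ output frequencies (Theorem~\ref{thm:main}), with the $\sqrt{n}$ then absorbed into the $3/(d\sqrt{n})$ factor via Corollary~\ref{cor:bandlimited_general}.
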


Note that algorithm mentioned in Theorem~\ref{thm:main_intro} will outperform existing deterministic SFTs with respect to runtime on functions with structured frequency support whenever $B \gg d^2 n \log N$.\footnote{Note that, as we are disregarding constant factors and all lower order terms in each of the variables $B$, $d$, $n$ and $N$, the big-$\mathcal{O}$ notation is true for $B,d,n,N\rightarrow\infty$.} Most importantly, it does so while still maintaining a (slightly weakened) $\ell^2/\ell^1$ error guarantee \eqref{equ:Main_thm_Error_Bound} of the same type as the error guarantees of many compressive sensing methods \cite{foucart2013mathematical}.  

Of course nothing comes for free.  The error guarantee \eqref{equ:Main_thm_Error_Bound} is only really meaningful in the setting where the function with structured frequency support, $f$, dominates the arbitrary noise $\eta$ in $f+\eta$.  If, for example, $|c_\omega(f)|<2\eps$ for all $\omega\in S = \bigcup^n_{j=1} S_j$, then $f+\eta$ might not be approximated well by a function with structured frequency sparsity anymore.  In such cases the runtime of Algorithm~\ref{alg:fourierapprox} in Theorem~\ref{thm:main_intro} will still be fast, but at the expense of the right hand side of \eqref{equ:Main_thm_Error_Bound} being relatively large (due to the $\eps$-term).  As a consequence, one can see that Theorem~\ref{thm:main_intro} only provides a meaningful computational improvement over standard deterministic SFTs in the case where, e.g., both $B \gg d^2 n \log N$ and 
$$\eps := \| \cc(\eta) \|_{\infty} \lesssim \frac{1}{d\sqrt{n}} \left\|\cc(N)-\cc\opt_{Bn}(N)\right\|_1 \leq \frac{1}{d\sqrt{n}} \sum_{\omega \notin S} |c_\omega(\eta)|$$ are true.

If one focuses on the more restrictive case of block frequency sparse functions, where all support sets $S_j$ in \eqref{equ:Poly_Structured_Support} are generated by evaluating $n$ linear, monic polynomials at $B$ consecutive points, the methods developed herein also provide the following simplified result.  It is a corollary of Theorem~\ref{thm:side} in \S\ref{sec:Alg_for_Special_Props}.

\begin{thm}\label{thm:main_intro_blockSparse}
Let $f, \eta \in L^2([0,2\pi])$ be as in \S\ref{sec:Intro_Notation} and let further $f$ be block frequency sparse.  In addition, assume for simplicity that $\cc_\omega(f+\eta) = 0$ for all $\omega \notin \intvl$.  In this case the variant of Algorithm~\ref{alg:fourierapprox} presented in \S\ref{sec:Alg_for_Special_Props} is guaranteed to always return a sparse $N$-length vector ${\x_R}$ of Fourier coefficient estimates that satisfies 
\[
 \|\cc(N)- {\x_R}\|_1\leq 4\left\|\cc(N)-\cc\opt_{Bn}(N)\right\|_1+2Bn\eps
\]
when given access to 
\[
 \mathcal{O}\left(\frac{Bn^2\log^4N}{\log^2n}\right)
\]
samples from $f+\eta$ on $[0, 2\pi]$.  Furthermore, the runtime of the algorithm is always
\[
 \mathcal{O}\left(\frac{Bn^2\log B\log^4N}{\log^2n}\right).
\]
\end{thm}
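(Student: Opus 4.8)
The plan is to derive this statement as a direct specialization of Theorem~\ref{thm:side} to the block frequency sparse case. First I would observe that block frequency sparsity is precisely the subcase of~\eqref{equ:Poly_Structured_Support} in which every generating polynomial is linear and monic, i.e.\ $P_j(x) = x + a_j$ evaluated at the $B$ consecutive points $k = 1, \dots, B$, so that the degree parameter is $d = 1$ and each support set $S_j$ is a block of $B$ consecutive integers. This consecutivity is the ``special property'' that the variant of Algorithm~\ref{alg:fourierapprox} in~\S\ref{sec:Alg_for_Special_Props} is built to exploit: whenever a prime modulus $p$ exceeds $B$, the residues $S_j \bmod p$ form a single contiguous run of length $B$, so an entire block can be detected and located from one short residue interval rather than from $B$ independent reconstructions. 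I would therefore begin by invoking Theorem~\ref{thm:side} with $d = 1$ and checking that its hypotheses collapse to the block sparse assumptions stated here.

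Next I would track how the sampling and runtime bounds of Theorem~\ref{thm:side} simplify under $d = 1$. The deterministic guarantee rests on choosing enough prime moduli $p_1, \dots, p_K$ so that the at most $n$ distinct block anchors $a_1, \dots, a_n \in \Intvl$ are uniquely recoverable via the Chinese Remainder Theorem; this forces the product of the chosen moduli to exceed $N$ and hence requires $\mathcal{O}(\log N)$ moduli whose sizes are polynomially bounded in $n$ and $\log N$. The block structure collapses the effective per-block work from the $B^2$ needed to disentangle $B$ arbitrary frequencies down to the $B$ needed to read off one contiguous run, which is exactly what removes one factor of $B$ and replaces the $d^2 n^3$ dependence of Theorem~\ref{thm:main_intro} by the $n^2$ dependence appearing here. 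Substituting $d = 1$ and simplifying the resulting products of primes and logarithmic factors should yield the stated $\mathcal{O}(Bn^2 \log^4 N / \log^2 n)$ sample count and $\mathcal{O}(Bn^2 \log B \log^4 N / \log^2 n)$ runtime, with the extra $\log B$ factor in the runtime coming from the per-run cost of locating the energetic frequencies inside each detected length-$B$ block.

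For the error guarantee I would follow the accounting sketched at the end of~\S\ref{sec:Intro_Notation}. Because $f$ is block sparse and $\eta$ is an arbitrary $\ell^1$-bounded perturbation with $\|\cc(\eta)\|_\infty \leq \eps$, every frequency the algorithm fails to capture either lies outside the union $S = \bigcup^n_{j=1} S_j$ of the true blocks, in which case it is charged to the best $Bn$-term tail $\|\cc(N) - \cc\opt_{Bn}(N)\|_1$, or is a genuinely energetic frequency whose coefficient the algorithm's median-type estimator recovers to within an additive $\eps$-error. Summing the at most $Bn$ pointwise estimation errors contributes the $2Bn\eps$ term, while the robustness of the identification step against the tail accounts for the constant factor $4$ multiplying $\|\cc(N) - \cc\opt_{Bn}(N)\|_1$. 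Combining these two contributions yields the stated $\ell^1$ bound.

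The main obstacle I anticipate is not the arithmetic of the substitution but establishing the correctness of block identification in the noisy, non-sparse regime: proving that, for the chosen primes, the energetic contiguous runs $S_j \bmod p_k$ can always be separated both from one another and from the spurious contributions of the tail and of $\eta$, so that the Chinese Remainder Theorem recovers each anchor $a_j$ unambiguously. This separation/identifiability property is exactly what Theorem~\ref{thm:side} should provide, so the crux of the corollary is verifying that the block hypothesis satisfies that theorem's conditions with $d = 1$, after which the stated sample, runtime, and error bounds follow by direct simplification.
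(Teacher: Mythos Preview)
Your high-level strategy---specialize Theorem~\ref{thm:side} to $d=1$ and simplify---is indeed what the paper does. However, your description of the underlying mechanism is inverted, and this would derail the detailed argument if you tried to carry it out.

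You write that for $p > B$ the residues $S_j \bmod p$ ``form a single contiguous run of length $B$, so an entire block can be detected and located from one short residue interval.'' The algorithm does the opposite: because $u > B$, the $B$ consecutive integers in $S_j$ land in $B$ \emph{distinct} residue classes modulo $u$. Hence each residue class $\{\omega : \omega \equiv \nu \bmod u\}$ contains at most one element from each block, so the restriction $\cc(N,u,\nu)$ is at most $n$-sparse. The saving comes from running the $n$-sparse SFT of \cite{Iw-arxiv} on each of the $u = \mathcal{O}(B)$ residue classes separately, giving total cost $u \cdot \mathcal{O}(n^2 \log^{\mathcal{O}(1)} N)$ instead of the $\mathcal{O}((Bn)^2 \log^{\mathcal{O}(1)} N)$ needed for a direct $Bn$-sparse recovery. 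There is no ``run detection'' step; every frequency is reconstructed individually via the CRT inside its own residue class. The extra $\log B$ factor in the runtime is simply the $\log u$ term from the length-$s_k t_l u$ FFTs, not a per-block search cost.

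For the error bound the paper (Corollary~\ref{cor:bandlimited_block}) does not use the global accounting you sketch but a residue-by-residue $\ell^1$ decomposition: write $\|\cc(N) - \x_R\|_1 = \sum_{\nu=0}^{u-1} \sum_{\omega \equiv \nu} |c_\omega - x_\omega|$, apply the per-residue estimates $|c_\omega - x_\omega| \leq \sqrt{2}\,\delta^{(1,\nu)}$ for $\omega \in R^{(1,\nu)}$ and $|c_\omega| \leq \eps + 2\sqrt{2}\,\delta^{(1,\nu)}$ for frequencies in $R^{(1,\nu),\mathrm{opt}}_n \setminus R^{(1,\nu)}$, and then sum the $\delta^{(1,\nu)}$ over $\nu$ using that the $2n$-term tails of the individual residue classes add up to at most the $2Bn$-term tail of $\cc(N)$. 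The constant $4$ and the $2Bn\eps$ term fall out of this summation; they do not come from a direct count of $Bn$ pointwise $\eps$-errors as you suggest.
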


Inspecting Theorem~\ref{thm:main_intro_blockSparse} above one can see that it always provides a theoretical runtime improvement over existing $B^2 n^2 \log^{\mathcal{O}(1)} N$-time methods for unstructured sparsity when applied to block frequency sparse functions.  Moreover, a (slightly weakened) $\ell^1/\ell^1$ sparse approximation error guarantee is obtained.  As above, we note that this result represents a significant improvement over existing techniques for this class of periodic functions as long as $\eps$ is sufficiently small (i.e., as long as $f + \eta$ is sufficiently well approximated by a block frequency sparse function).

\subsection{Techniques and Overview}

The deterministic SFT algorithms introduced in \cite{Iw} implicitly construct compressive sensing matrices $\mathcal{M} \in \{ 0,1\}^{m \times N}$ with $m \ll N$ which have several useful properties, including $(i)$ the restricted isometry property, $(ii)$ they are the adjacency matrices of highly unbalanced expander graphs, and $(iii)$ they are $d$-disjunct group testing matrices.\footnote{See \cite{bailey2012design} for additional details about these matrices and all of their remarkable properties.}  In addition to these properties, the matrices $\mathcal{M}$ also interact well with the Fourier basis in the following sense.  Let $\F \in \mathbb{C}^{N \times N}$ be a discrete Fourier transform matrix. Then, the matrix product $\mathcal{M} \F$ is guaranteed to be highly sparse, with fewer than $m$ columns containing nonzero entries.   

It is precisely this collection of properties of $\mathcal{M}$ which ultimately allows for the development of the improved deterministic SFT algorithms presented in \cite{Iw-arxiv}.  To get some intuition for how this works, one can consider the recovery of the approximately sparse Fourier coefficients $\cc(N) \in \CC^N$ using only the measurements $\mathcal{M} \left( \cc(N) \right) \in \CC^m$.  The properties of $\mathcal{M}$ make it clear that such recovery is possible via, e.g., standard compressive sensing methods \cite{foucart2013mathematical}.  In fact, with more work one can show that the special properties of $\mathcal{M}$ allow for the recovery of $\cc(N)$ in just $m \log^{\mathcal{O}(1)}m$-time, and without compromising the error guarantees one generally expects from compressive sensing algorithms such as Basis Pursuit.  In addition, only a small number of samples from $f + \eta$ are required, since $\mathcal{M}\F$ is highly sparse, and 
$$\mathcal{M} \cc(N) = \left(\mathcal{M} \F \right) \left( \F^{-1} \cc(N) \right) = \left(\mathcal{M} \F \right) {\x}$$
where ${\x}$ has $x_j= (f + \eta) \left( \frac{2 \pi j}{N} \right)$.  Thus, just a few entries of ${\x}$ have to be observed in order to obtain the necessary measurements $\mathcal{M} \left( \cc(N) \right)$.

In this paper we build on \cite{Iw,Iw-arxiv} by augmenting the number theoretic constructions of the matrices $\mathcal{M}$ above in a way which allows us to benefit from structured frequency support while simultaneously preserving all the of properties of $\mathcal{M}$ needed in order to maintain extremely fast (i.e., sub-linear) runtimes.  Intuitively, this is accomplished by augmenting a well chosen measurement matrix $\mathcal{M}$ from \cite{Iw-arxiv} with a set of several additional vectors $( {\mathbf u}_j )^{M}_{j=1} \subset \{ 0, 1\}^N$ as follows.  Let $\circ$ denote the Hadamard product and $\cast$ the row-wise Hadamard product, where the first $\kappa$ rows of $A\cast B$ are given as the Hadamard product of all rows of $A$ with the first row of $B$, the second $\kappa$ rows as the Hadamard product of all rows of $A$ with the second row of $B$ and so forth. Below we will utilize a set of new measurement matrices $\mathcal{M} \cast {\mathbf u}_1, \dots, \mathcal{M} \cast {\mathbf u}_M \in \{ 0,1 \}^{m \times N}$.  Collectively, these new matrices are then shown to still allow all of the measurements $\left( \mathcal{M} \cast {\mathbf u}_1 \right) \left( \cc(N) \right), \dots, \left( \mathcal{M} \cast {\mathbf u}_M \right) \left( \cc(N) \right) \in \CC^m$ to be computed using just a few samples from $f + \eta$.  Furthermore, when $f + \eta$ is structured frequency sparse, it is shown that ${\mathbf u}_j \circ \cc(N)$ will be guaranteed to be significantly more sparse than $\cc(N)$ for most of the values of $j = 1, \dots, M$.  Hence, the deterministic SFT methods from \cite{Iw,Iw-arxiv} will allow 
each such ${\mathbf u}_j \circ \cc(N)$ to be recovered using the measurements 
$$\left( \mathcal{M} \cast {\mathbf u}_j \right) \left( \cc(N) \right) = \mathcal{M} \left( {\mathbf u}_j \circ \cc(N) \right)$$
much faster than one can deterministically recover $\cc(N)$ all at once using the same techniques.  

Finally, the structure of the vectors $( {\mathbf u}_j )^{M}_{j=1} \subset \{ 0, 1\}^N$ is then used to rapidly and accurately reconstruct $\cc(N)$ from the set of its partial reconstructions of $\left ( {\mathbf u}_j \circ \cc(N) \right )^M_{j=1}$.  Here it becomes crucial to deal with the fact that the partial reconstructions of ${\mathbf u}_j \circ \cc(N)$ are incorrect for some values of $j$.  Thankfully, median arguments adapted from earlier SFT algorithms \cite{GGIMS,GMS} allow this to be handled easily by simply using enough vectors $( {\mathbf u}_j )^{M}_{j=1}$ in order to guarantee that the majority of the values of $j$ provide good results.  It then just remains to modify the reconstruction procedure from \cite{Iw-arxiv} in order to rapidly recover $\cc(N)$ from the partial reconstructions of $\left ( {\mathbf u}_j \circ \cc(N) \right )^M_{j=1}$.

The remainder of the paper is organized as follows:  In \S\ref{sec:Preliminaries} the vectors $( {\mathbf u}_j )^{M}_{j=1}$ discussed above are constructed, and it is proven that ${\mathbf u}_j \circ \cc(N)$ will be approximately sparse for the majority of the ${\mathbf u}_j$ whenever $f + \eta$ exhibits sufficiently structured frequency support.  Next, a deterministic reconstruction algorithm is developed for functions with structured frequency support in \S\ref{sec:GeneralAlgResults},  and Theorem~\ref{thm:main_intro} is proven.  These results are then improved for the simpler class of block frequency sparse signals in \S\ref{sec:Alg_for_Special_Props}, and Theorem~\ref{thm:main_intro_blockSparse} is proven. Finally, the methods developed for block frequency sparse functions are empirically evaluated in \S\ref{sec:Numerical_Eval}.  The paper then concludes with a short discussion of future work in \S\ref{sec:Conclusion}.

\section{Preliminaries}  
\label{sec:Preliminaries}    
\begin{diss}
We will always consider a continuous periodic function $f\colon[0,2\pi]\rightarrow\CC$ which is bandlimited, i.e. for its finite Fourier transform 
$\cc(f)=\left(c_\omega\right)_{\omega\in\ZZ}$ with
\[
 c_\omega:=c_\omega(f):=\frac{1}{2\pi}\int\limits_0^{2\pi}f(x)e^{-i\omega x}dx
\]
we have that $c_\omega=0$ for all $\omega\notin\intvl$ for some large $N\in\NN$. Note that $N$ does not have to be a power of 2. Further, we consider a $2\pi$-periodic noise function $\eta$ that satisfies $\cc(\eta)\in\ell^1$ and $\|\cc(\eta)\|_\infty\leq\eps$ for some $\eps>0$ and may have nonzero Fourier coefficients outside of $\intvl$. 

We denote by $\mathbf{c}(N)\in\CC^N$ the restriction of the vector $\cc(f+\eta)$ to the frequencies contained in $\intvl$ and by $\cc(N,\ZZ)$ the embedding of $\cc(N)$ into $\CC^\ZZ$:
\[
 \left(\cc(N,\ZZ)\right)_\omega=\begin{cases}
                               c_\omega, &\quad \omega\in\intvl, \\
                               0, &\quad\text{otherwise.}
                              \end{cases}
\]
With the help of the Fourier transform $\widehat{f}$ of $f$, evaluated at a frequency $\omega$, we obtain:
\begin{align*}
 \widehat{f}(\omega)=&\frac{1}{2\pi}\int_0^{2\pi}\sum_{\nu\in\ZZ}c_\nu e^{i\nu x}\cdot e^{-i\omega x}dx \\
 =&\sum_{\nu\neq\omega}\frac{1}{2\pi}\int_0^{2\pi}c_\nu e^{i\nu x}\cdot e^{-i\omega x}dx+\frac{1}{2\pi}\int_0^{2\pi}c_\omega=c_\omega.
\end{align*}
\begin{dfn}
 Let $\eps>0$ be a given accuracy level. Let $f\colon[0,2\pi]\rightarrow\CC$ be defined as
 \[
  f(x)=\sum_{\omega\in\ZZ}c_we^{i\omega x}.
 \]
 Then a Fourier coefficient $c_\omega\in\CC$ is \emph{significantly large} if $|c_\omega|>\eps$. A frequency $\omega\in\ZZ$ is \emph{energetic} if its corresponding Fourier coefficient $c_\omega$ is significantly large.
\end{dfn}
\begin{dfn}[Notation]\label{dfn:notation1}
 Here, $\NN$ will always denote the positive natural numbers, i.e. $\NN=\{1,2,3,\dotsc\}$. Let $M\in\NN$, $I=\{0,\dotsc,M-1\}$ and $R\subset I$. For a vector $\mathbf{x}\in\CC^{|I|}$ we define the vector $\mathbf{x}_R\in\CC^{|I|}$ by
 \[
  (\mathbf{x}_R)_i=\begin{cases}
                    x_i, \quad\text{if } i\in R,\\
                    0, \quad\text{otherwise}
                   \end{cases}
 \]
 for all $i\in I$. Furthermore we denote by $\mathbf{0}_M\in\CC^M$ the vector consisting of $M$ zeroes and by $\mathbf{1}_M\in\CC^M$ the  vector consisting of $M$ ones.
 
 Define for a $D<|I|$ the subset $R_D^{\text{opt}}\subset I$ to be the, in lexicographical order, first $D$ element subset such that  $|x_j|\geq|x_k|$ for all $j\in R_D^{\text{opt}}$ and $k\in I\backslash R_D^{\text{opt}}$. Then $R_D^{\text{opt}}$ contains the indices of  the $D$ entries of $\x$ that have the largest magnitudes. While choosing $D$ entries with the largest magnitude might not be unique,  $R_D^{\text{opt}}$ is unique. For reasons of clarity we set $\x_D^{\text{opt}}:=\x_{R_D^{\text{opt}}}$.
\end{dfn}
\end{diss}
First, we formally define the notion of polynomially structured sparsity that was already mentioned in (\ref{equ:Poly_Structured_Support}) in \S\ref{ch:intro}.
\begin{dfn}[$P(n,d,B)$-structured Sparsity]
 Let $B,d,n,N\in\NN$ such that $d<B<N$ and let $P_1,\dotsc,P_n\in\ZZ[x]$ be non-constant polynomials of degree at most $d$ with
 \[
  P_j(x)=\sum_{k=0}^da_{jk}x^k,
 \]
 where $a_{jk}\in\intvl$ such that for all $j\in\{1,\dotsc,n\}$ and $x\in\{1,\dotsc,B\}$ we have $P_j(x)\in\intvl$. Define the $n$ \emph{support sets}
 \[
  S_j:=\{P_j(x):x\in\{1,\dotsc,B\}\}
 \]
 and let $S:=\bigcup_{j=1}^nS_j$. A $2\pi$-periodic function $f\colon[0,2\pi]\rightarrow\CC$ is \emph{P(n,d,B)-structured sparse} if it is of the form
 \[
  f(x)=\sum_{\omega\in S}c_\omega(f) e^{i\omega x}
 \]
 for some vector of Fourier coefficients $(c_\omega(f))_{\omega\in S}\in\CC^{Bn}$.
\end{dfn}
This means that the at most $Bn$ energetic frequencies of the function $f$ are generated by evaluating $n$ polynomials of degree at most $d$ with integer coefficients at $B$ points.

Our aim in this paper is to develop a sublinear-time Fourier algorithm for $P(n,d,B)$-structured sparse input functions, based on ideas introduced in \cite{Iw} and \cite{Iw-arxiv}. One important concept for our method is that of a \emph{good hashing prime}; a prime modulo which not all frequencies in a support set $S_j$ are hashed to the same residue.
\begin{dfn}\label{dfn:wellhashed}
 Let $f$ be a $P(n,d,B)$-structured sparse function with support set $S=\bigcup_{j=1}^nS_j$ generated by some polynomials $P_1,\dotsc,P_n$. Then a prime $p>B$ \emph{hashes} a support set $S_j$ \emph{well} if
 \[
  |\{\omega\mods p:\omega\in S_j\}|>1.
 \]
\end{dfn}
\begin{lem}\label{lem:wellhashed_iff}
  Let $f$ be a $P(n,d,B)$-structured sparse function with support set $S=\bigcup_{j=1}^nS_j$ defined by some polynomials $P_1,\dotsc,P_n$. Then a prime $p>B$ hashes a support set $S_j$ with generating polynomial 
 \[
  P_j(x)=\sum_{k=0}^da_{jk}x^k
 \]
 well if and only if there exists a non-constant coefficient $a_{jk}$, $k\neq0$, with $p\nmid a_{jk}$.
\end{lem}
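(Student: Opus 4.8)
The plan is to prove both implications via their contrapositives, reducing everything to the classical fact that a nonzero polynomial of degree at most $d$ over the field $\ZZ/p\ZZ$ has at most $d$ roots. By Definition~\ref{dfn:wellhashed}, $p$ fails to hash $S_j$ well precisely when all of the values $P_j(1),\dotsc,P_j(B)$ are congruent to a single residue modulo $p$, so it suffices to show that this happens if and only if $p\mid a_{jk}$ for every $k\in\{1,\dotsc,d\}$.

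For the easy direction I would assume $p\mid a_{jk}$ for all $k\neq0$. Reducing $P_j$ modulo $p$ then collapses it to the constant polynomial $a_{j0}$, so $P_j(x)\equiv a_{j0}\pmod p$ for every integer $x$; in particular every element of $S_j$ shares the residue $a_{j0}\mods p$, and hence $p$ does not hash $S_j$ well.

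For the converse I would suppose $p$ does not hash $S_j$ well, so there is a residue $r$ with $P_j(x)\equiv r\pmod p$ for all $x\in\{1,\dotsc,B\}$. Consider the polynomial $Q(x):=P_j(x)-r$ viewed in $(\ZZ/p\ZZ)[x]$; its coefficient of $x^k$ for each $k\geq1$ is exactly $a_{jk}\mods p$, while by construction $Q$ vanishes at $x=1,2,\dotsc,B$ modulo $p$. The key point is that, because $p>B$, the arguments $1,\dotsc,B$ are pairwise distinct in $\ZZ/p\ZZ$, so $Q$ has $B$ distinct roots over a field. Since $\deg Q\leq d<B$, the root-counting bound forces $Q$ to be the zero polynomial in $(\ZZ/p\ZZ)[x]$; thus every coefficient of $Q$, and in particular every $a_{jk}$ with $k\neq0$, is divisible by $p$. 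This is the contrapositive of the remaining implication and completes the argument.

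The only real obstacle is making sure the reduction to root-counting is valid: one must pass from the evaluation condition on $S_j$ to a statement about a single polynomial over the field $\ZZ/p\ZZ$, and this requires both that $p$ be prime (so that $\ZZ/p\ZZ$ is a field and the degree-$\leq d$ polynomial $Q$ has at most $d$ roots) and that $p>B$ (so that the $B$ sample points remain distinct modulo $p$ rather than collapsing and yielding fewer than $B$ genuine roots). Once these two hypotheses are invoked, the inequality $d<B$ supplied by the definition of $P(n,d,B)$-structured sparsity does the rest.
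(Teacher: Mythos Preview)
Your proof is correct and follows essentially the same approach as the paper's: both directions are argued via contrapositive, and the nontrivial implication reduces to the root-counting bound for polynomials over $\ZZ/p\ZZ$, using that $Q(x)=P_j(x)-r$ (the paper writes $r=P_j(y)$ for a fixed $y$, which is the same thing) has $B>d$ distinct zeros modulo $p$ and hence vanishes identically. Your version is arguably a touch more explicit in justifying why the $B$ sample points stay distinct modulo $p$ (namely $p>B$) and in citing the hypothesis $d<B$, but the underlying argument is identical.
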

\begin{proof}
 Assume $p|a_{jk}$ for all $k\in\{1,\dotsc,d\}$. Then we have for all $x\in\{1,\dotsc,B\}$ that
 \[
  P_j(x)=\sum_{k=0}^da_{jk}x^k\equiv a_{j0}\mods p \quad \Rightarrow \quad |\{\omega\mods p:\omega\in S_j\}|=1,
 \]
 so $p$ does not hash $S_j$ well. If, on the other hand, $p$ does not hash $S_j$ well, then 
 \[
  |\{\omega\mods p:\omega\in S_j\}|=1 \quad\Rightarrow\quad P_j(y)\equiv P_j(z)\mods p \quad \forall y,z\in\{1,\dotsc,B\}.
 \]
 This means that for fixed $y\in\{1,\dotsc,B\}$ the polynomial 
 \[
  Q(x):=P_j(x)-P_j(y)=\sum_{k=0}^da_{jk}x^k-P_j(y)
 \]
 of degree $d$ has $B>d$ zeroes modulo $p$. Thus $Q$ is the zero polynomial modulo $p$, and 
 \[
  p|\left(a_{j0}-P_j(y)\right) \quad\text{and}\quad p|a_{jk} \quad\forall j\in\{1,\dotsc,d\}. 
 \]
\end{proof}
For a good hashing prime and a $P(n,d,B)$-structured sparse function we can bound the number of frequencies that are hashed to the same residue.
\begin{lem}\label{lem:wellhashed}
 Let $f$ be a $P(n,d,B)$-structured sparse function with support set $S=\bigcup_{j=1}^nS_j$ defined by some polynomials $P_1,\dotsc,P_n$. If  a support set $S_j$ is hashed well by a prime $p>B$, then
 \begin{enumerate}[label=(\roman*)]
  \item $P_j$ is not constant modulo $p$ and
  \item $|\{\omega\equiv \nu\mods p:\omega\in S_j\}|\leq d$ for all residues $\nu\in\{0,\dotsc,p-1\}$.
 \end{enumerate}
\end{lem}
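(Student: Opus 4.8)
The plan is to obtain part (i) immediately from Lemma~\ref{lem:wellhashed_iff} and then to establish part (ii) by a root-counting argument over the finite field $\mathbb{F}_p$. For (i), since $p$ hashes $S_j$ well, Lemma~\ref{lem:wellhashed_iff} supplies a coefficient $a_{jk}$ with $k\neq0$ and $p\nmid a_{jk}$. Hence the reduction of $P_j$ modulo $p$ retains a nonzero coefficient in some degree $k\geq1$, so $P_j$ is not constant modulo $p$, which is exactly (i). This non-constancy is then the input I would feed into the argument for (ii).

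For (ii), I would fix an arbitrary residue $\nu\in\{0,\dotsc,p-1\}$ and consider $R(x):=P_j(x)-\nu$ viewed as a polynomial over $\mathbb{F}_p$. By part (i) this is a nonzero polynomial, and its degree $d'$ satisfies $1\leq d'\leq d$: the degree may drop below $d$ if $p\mid a_{jd}$, but it remains positive precisely because $P_j$ is non-constant modulo $p$. Since a nonzero polynomial of degree $d'$ over a field has at most $d'$ roots, there are at most $d'$ residues $r\in\{0,\dotsc,p-1\}$ with $P_j(r)\equiv\nu\mods p$.

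It then remains to convert this bound on root-residues into a bound on energetic frequencies. Every $\omega\in S_j$ with $\omega\equiv\nu\mods p$ equals $P_j(x)$ for some $x\in\{1,\dotsc,B\}$, whose residue modulo $p$ is therefore a root of $R$ over $\mathbb{F}_p$. Here I would invoke the hypothesis $p>B$: the integers $1,\dotsc,B$ lie in an interval of length strictly less than $p$, so they are pairwise distinct modulo $p$, and hence at most $d'$ of them can reduce to one of the $\leq d'$ root-residues. Consequently at most $d'$ values of $x$ satisfy $P_j(x)\equiv\nu\mods p$, and since $x\mapsto P_j(x)$ maps this set onto $\{\omega\in S_j:\omega\equiv\nu\mods p\}$, the latter has cardinality at most $d'\leq d$, as claimed.

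The only genuinely delicate point is this final bookkeeping step: one must count distinct frequencies $\omega$ rather than their preimages $x$, and it is exactly the hypothesis $p>B$ that forbids two distinct $x\in\{1,\dotsc,B\}$ from sharing a residue and thereby inflating the count. Everything else reduces to the standard fact that a degree-$d'$ polynomial over $\mathbb{F}_p$ has at most $d'$ roots, together with the positivity of $d'$ already secured in part (i).
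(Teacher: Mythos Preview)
Your proof is correct and follows essentially the same root-counting argument over $\mathbb{F}_p$ as the paper. The paper argues (i) directly from the definition of ``hashes well'' (if $P_j$ were constant modulo $p$ then all residues would coincide), whereas you route through Lemma~\ref{lem:wellhashed_iff}; and for (ii) the paper phrases the same root bound as a proof by contradiction, while you argue directly and are more explicit about why $p>B$ is needed to make $1,\dotsc,B$ distinct modulo $p$. These are stylistic variations on the same idea.
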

\begin{proof}
 It is clear that $P_j$ is not constant modulo $p$ if $|\{\omega\mods p:\omega\in S_j\}|>1$. Assume now that  $|\{\omega\equiv \nu\mods p:\omega\in S_j\}|> d$ for some $\nu\in\{0,\dotsc,p-1\}$. Since all elements of $S_j$ are generated by 
\begin{diss}
 the polynomial 
 \[
  P_j(x)=\sum_{k=0}^da_{jk}x^k,
 \]
\end{diss}
\begin{paper}
 evaluating $P_j$ at $B$ points,
\end{paper}
 we find for a $y\in\{1,\dotsc,B\}$ with $P_j(y)\equiv \nu\mods p$ that
 \[
  P_j(y)\equiv P_j(z) \mods p
 \]
 for $d$ distinct choices of $z\in\{1,\dotsc,B\}\backslash\{y\}$. Then the polynomial
\begin{diss}
 \[
  Q(x):=P_j(x)-P_j(y)=\sum_{k=1}^{d}a_{jk}x^k+a_{j0}-P_j(y)
 \]
\end{diss}
\begin{paper}
 $Q(x):=P_j(x)-P_j(y)$
\end{paper}
 has at least $d+1$ zeroes modulo $p$,
\begin{diss}
 . As $Q$ is a polynomial of degree at most $d$ modulo $p$, this is a contradiction, so (ii) holds.
\end{diss}
\begin{paper}
which is a contradiction, so (ii) holds.
\end{paper}
\end{proof}
Let us now assume that there exists a prime $p>B$ that hashes all support sets $S_1,\dotsc,S_n$ of a $P(n,d,B)$-structured sparse function well. Then the restriction of any $S_j$ to the frequencies congruent to $\nu$ modulo $p$ is at most $d$-sparse for all residues $\nu\in\{0,\dotsc,p-1\}$. Consequently, the restriction of $S$ to these frequencies is at most $dn$-sparse. 

 In our setting of $P(n,d,B)$-structured sparse input functions we want to apply the SFT algorithm in \cite{Iw-arxiv} (Algorithm 3) to the restrictions of the function to frequencies congruent to $\nu$ modulo $u$ for all residues $\nu$, where $u$ is a prime that hashes all support sets well, since these restrictions are at most $dn$-sparse.

In general, finding a single well-hashing prime $u$ for all support sets is not possible without further information on the generating polynomials. However, we can use the observations presented in the remainder of \S\ref{sec:Preliminaries} to find $M$ primes such that the majority of them hashes all support sets well. As these methods, as well as Algorithm 3 in \cite{Iw-arxiv}, rely heavily on the Chinese Remainder Theorem, we state it here as a reminder (see \cite{lang_algebra}).
\begin{thm}[Chinese Remainder Theorem (CRT)]\label{thm:crt}
 Let $n_1,\dotsc,n_m$ be pairwise relatively prime integers and $N\leq\prod_{j=1}^{m}n_j$. Then the system of simultaneous congruencies $x\equiv a_1 \mods n_1,\dotsc,x\equiv a_m\mods n_m$ has a unique solution modulo $N$. 
\end{thm}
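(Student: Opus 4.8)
The plan is to prove the two standard halves of the statement---existence and uniqueness of the solution---and then reconcile them with the quantitative condition involving $N$. Throughout I write $P:=\prod_{j=1}^{m}n_j$ for the full product, so that the hypothesis reads $N\leq P$.

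For existence I would exhibit a solution explicitly. For each index $j$ set $P_j:=P/n_j=\prod_{i\neq j}n_i$. The pairwise coprimality of the $n_i$ guarantees $\gcd(P_j,n_j)=1$, so by Bézout's identity there is an integer $Q_j$ with $P_jQ_j\equiv1\pmod{n_j}$. I would then verify that
\[
 x:=\sum_{j=1}^{m}a_jP_jQ_j
\]
solves the system: reducing modulo a fixed $n_i$, every summand with $j\neq i$ is divisible by $n_i$ (since $n_i\mid P_j$) and hence vanishes, while the surviving term satisfies $a_iP_iQ_i\equiv a_i\pmod{n_i}$. This produces at least one solution.

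For uniqueness, suppose $x$ and $x'$ both satisfy all $m$ congruences. Then $n_j\mid(x-x')$ for every $j$, and the key step is to upgrade these $m$ separate divisibilities into $P\mid(x-x')$. This is exactly where pairwise coprimality is essential: I would argue by induction on $m$, using that $n_1\mid k$ and $n_2\mid k$ with $\gcd(n_1,n_2)=1$ force $n_1n_2\mid k$, or equivalently invoke $\lcm(n_1,\dotsc,n_m)=P$ for pairwise coprime moduli. Hence any two solutions agree modulo $P$, which is the genuine content of uniqueness.

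Finally I would translate this into the stated form. Since the solution is determined uniquely modulo $P$ and $N\leq P$, no two distinct integers lying in a common interval of length $N$ can both solve the system; in the intended application a frequency known a priori to lie in $\intvl$ is therefore pinned down unambiguously by its residues. I expect the only genuinely load-bearing step to be the coprime-to-product implication in the uniqueness argument---the existence half is a direct computation once the inverses $Q_j$ are produced by Bézout, and the passage from ``unique modulo $P$'' to the $N$-recovery statement is immediate from $N\leq P$.
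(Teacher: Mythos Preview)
Your proof is correct and entirely standard. The paper itself does not give a proof of this statement at all: it simply states the theorem and refers the reader to Lang's \emph{Algebra}. So there is no ``paper's own proof'' to compare against, and what you have written is exactly the classical argument one would expect.

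One small remark: the theorem as phrased in the paper (``unique solution modulo $N$'' with $N\leq P$) is a little loose---literally read, uniqueness modulo $N$ is weaker than what is true, and existence of a solution in any given length-$N$ window is not guaranteed by the congruences alone. You handled this well by proving the genuine statement (uniqueness modulo $P=\prod_j n_j$) and then explaining how the condition $N\leq P$ is used in the application: a frequency already known to lie in $\intvl$ is pinned down by its residues because any two solutions differ by a multiple of $P\geq N$. That is precisely how the paper uses the result downstream, so your reconciliation is on target.
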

\begin{dfn}[Enumeration of the Natural Primes]
 For $j\in\NN$ denote by $p_j$ the $j$-th natural prime number. Additionally, let $p_0=1$. Then,
 \[
  p_0=1,\quad p_1=2,\quad p_2=3,\quad p_3=5,\quad p_4=7,\quad \dotsc\quad .
 \]
\end{dfn}
In order to find primes for which the restriction of the frequencies to any residue is sparse, we first need to define the notion of separation.
\begin{dfn}[Separation]
 Let $k\in\NN$ and $\omega_1,\dotsc,\omega_k\in\ZZ$. An integer $n\in\NN$ is said to \emph{separate}  
 $\omega_1,\dotsc,\omega_k$ if
 \[
  \omega_j\mods n\neq\omega_l\mods n \quad\forall j,l\in\{1,\dotsc,k\},\, j\neq l.
 \]
\end{dfn}
The following result about separating primes has been shown in \cite{Iw-arxiv}.
\begin{lem}\label{lem:u_m}
 Let $E\in\NN$ and $u_1:=p_r$ for some $r\in\NN$. Set $M=2\cdot E\cdot\lfloor\log_{u_1}N\rfloor+1$. Choose $M-1$ further primes with $u_1<\dotsb<u_M$ and let $T\subset\intvl$ with $|T|\leq E$. Then more than $\frac{M}{2}$ of the $u_m$ 
 separate every $x\in\intvl$ from all $t\in T\backslash\{x\}$.
\end{lem}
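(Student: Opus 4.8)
The plan is to read the statement as: for every fixed $x\in\intvl$, more than $M/2$ of the primes $u_1<\dots<u_M$ separate $x$ from all $t\in T\setminus\{x\}$. So I would fix an arbitrary $x\in\intvl$ and count, among the $M$ primes, how many \emph{fail} to do this, aiming to show that this count is at most $EL$ where $L:=\lfloor\log_{u_1}N\rfloor$. Call a prime $u_m$ \emph{bad for $x$} if there is some $t\in T\setminus\{x\}$ with $x\equiv t\mods u_m$, i.e.\ $u_m\mid(x-t)$, so that $u_m$ does not separate $x$ from that $t$; otherwise call it \emph{good for $x$}. A good prime separates $x$ from all of $T\setminus\{x\}$ by definition, so it suffices to bound the number of bad primes and then let $x$ range over $\intvl$ at the end.

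The main step is a per-$t$ divisor count: for each fixed $t\in T\setminus\{x\}$, at most $L=\lfloor\log_{u_1}N\rfloor$ of the primes $u_1,\dots,u_M$ divide $x-t$. Since $x,t\in\intvl$ and $t\neq x$, the difference satisfies $0<|x-t|\leq N-1<N$. If $j$ of the (distinct) primes $u_m$ divide the nonzero integer $x-t$, then their product divides $x-t$ as well, and as each of these primes is at least $u_1$ we get $u_1^{\,j}\leq|x-t|<N$; taking logarithms base $u_1$ gives $j<\log_{u_1}N$, hence $j\leq L$ because $j$ is a nonnegative integer.

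It then remains to assemble these bounds. A union bound over the at most $E$ elements of $T\setminus\{x\}$ shows that the number of primes that are bad for $x$ is at most $|T\setminus\{x\}|\cdot L\leq EL$. Recalling $M=2EL+1$, the number of primes good for $x$ is therefore at least $M-EL=EL+1>EL+\tfrac12=\tfrac{M}{2}$. Since $x\in\intvl$ was arbitrary, more than $M/2$ of the $u_m$ separate every $x$ from all $t\in T\setminus\{x\}$, which is exactly the claim.

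The only genuinely load-bearing ingredient is the per-$t$ count, and the one thing to be careful about is that it relies on the $u_m$ being distinct primes all at least $u_1$, so that the product of any $j$ of them is bounded below by $u_1^{\,j}$; this is precisely where the hypotheses $u_1=p_r$ and the strict ordering $u_1<\dots<u_M$ are used. Everything else — the union bound over $T\setminus\{x\}$ and the arithmetic $M-EL=EL+1>M/2$ — is routine bookkeeping, so I do not expect a serious obstacle here.
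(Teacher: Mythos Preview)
Your argument is correct and is precisely the standard proof of this lemma: fix $x$, observe that for each $t\in T\setminus\{x\}$ at most $\lfloor\log_{u_1}N\rfloor$ of the $u_m$ can divide the nonzero integer $x-t$ (since any $j$ of them contribute a factor of at least $u_1^{\,j}$ to a number of absolute value at most $N-1$), take a union bound over the at most $E$ elements of $T\setminus\{x\}$, and compare with $M=2E\lfloor\log_{u_1}N\rfloor+1$. The paper itself does not prove this lemma but simply cites it from \cite{Iw-arxiv}; your write-up supplies exactly the argument that reference contains, so there is nothing to compare against beyond noting agreement.
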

In the next lemma we prove that, for a suitable $M$, it suffices to find $M$ primes such that more than half of them separate the leading coefficients of the frequency generating polynomials from 0 at the same time, in order to guarantee that more than half of them hash all support sets well.
\begin{lem}\label{lem:umwellhashed}
 Let $f$ be $P(n,d,B)$-structured sparse with support set $S=\bigcup_{j=1}^nS_j$ defined by some polynomials $P_1,\dotsc,P_n$, and set $E=n+1$. Let $M$ primes $B<u_1<\dotsb<u_M$ be given as in Lemma \ref{lem:u_m}. Then more than $\frac{M}{2}$ of the $u_m$ hash all $n$ support sets $S_1,\dotsc,S_n$ well.
\end{lem}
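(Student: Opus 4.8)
The plan is to connect the algebraic characterization of well-hashing primes in Lemma~\ref{lem:wellhashed_iff} with the simultaneous separation guarantee of Lemma~\ref{lem:u_m}. Since each $P_j$ is non-constant, it has at least one non-constant coefficient $a_{jk}$ with $k\neq 0$ and $a_{jk}\neq 0$; for concreteness I would fix the leading coefficient $\ell_j:=a_{j,\deg P_j}$ for every $j$, which is nonzero because $\deg P_j\geq 1$. By Lemma~\ref{lem:wellhashed_iff}, any prime $u_m>B$ with $u_m\nmid\ell_j$ automatically hashes $S_j$ well. Hence it suffices to exhibit, among $u_1,\dotsc,u_M$, more than $\frac{M}{2}$ primes that simultaneously fail to divide every one of $\ell_1,\dotsc,\ell_n$.

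To produce such primes I would apply Lemma~\ref{lem:u_m} to the set $T:=\{0\}\cup\{\ell_1,\dotsc,\ell_n\}$. Since every coefficient $a_{jk}$ lies in $\intvl$ and $0\in\intvl$, we have $T\subset\intvl$, and since $T$ consists of at most $n$ leading coefficients together with $0$ we have $|T|\leq n+1=E$, which is exactly the cardinality bound required by that lemma. With $M=2E\lfloor\log_{u_1}N\rfloor+1$ chosen as in Lemma~\ref{lem:u_m}, the lemma then guarantees that more than $\frac{M}{2}$ of the $u_m$ separate every $x\in\intvl$ from all $t\in T\backslash\{x\}$.

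Specializing the separated element to $x=0\in T$, each of these more than $\frac{M}{2}$ primes $u_m$ satisfies $0\not\equiv\ell_j\mods u_m$ for every $j$, that is, $u_m\nmid\ell_j$ for all $j\in\{1,\dotsc,n\}$. Combining this with the first paragraph, each such $u_m$ — which is moreover $>B$ by hypothesis — hashes every support set $S_1,\dotsc,S_n$ well, which is precisely the claim.

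The only real subtlety is the bookkeeping that reduces the statement \emph{hashes all support sets well} to a single separation statement: one must separate $0$ from the leading coefficients (rather than separating the coefficients from each other), and one must check that restricting attention to the single coefficient $\ell_j$ loses nothing, since Lemma~\ref{lem:wellhashed_iff} only requires one non-constant coefficient to survive reduction modulo $u_m$. Matching $|T|\leq E=n+1$ to the hypothesis of Lemma~\ref{lem:u_m} is exactly what dictates the choice $E=n+1$ in the statement.
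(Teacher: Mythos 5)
Your proposal is correct and follows essentially the same route as the paper: both apply Lemma~\ref{lem:u_m} with $E=n+1$ to the set consisting of $0$ together with the leading coefficients, and conclude that any prime separating $0$ from all leading coefficients hashes every $S_j$ well. The only cosmetic difference is that you invoke Lemma~\ref{lem:wellhashed_iff} to pass from ``$u_m\nmid a_{j,\deg P_j}$'' to ``$u_m$ hashes $S_j$ well,'' whereas the paper re-derives that implication inline via the root-counting argument for $Q(x)=P_j(x)-\nu$ modulo $p$.
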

\begin{proof}
 Let $T$ be the set consisting of the distinct leading polynomial coefficients,
 \[
  T:=\left\{a_{j,\deg(P_j)}: P_j(x)=\sum_{k=0}^{\deg(P_j)}a_{jk}x^k,\,j\in\{1,\dotsc,n\}\right\}.
 \]
 Then $a_{j,\deg(P_j)}\neq 0$ for all polynomials and, since $|T\cup\{0\}|\leq E$, by Lemma \ref{lem:u_m} more than $\frac{M}{2}$ of the $u_m$ separate every element of $\intvl$ from all other elements of $T\cup\{0\}$, i.e., from all distinct leading polynomial coefficients  and from 0. 
 
 Let $p$ be such a prime. Assume that there exists a support set $S_j$ that is not well hashed by $p$, so that we have
 \[
  \{\omega\mods p:\omega\in S_j\}=\{\nu\}
 \]
 for some residue $\nu\in\{0,\dotsc,p-1\}$. Then the polynomial $P_j$ that generates $S_j$ satisfies 
 \[
  P_j(x)-\nu\equiv0\mods p \quad \forall x\in\{1,\dotsc,B\}.
 \]
 Consider now the polynomial $Q(x):=P_j(x)-\nu$ modulo $p$. It is a polynomial of degree at most $d$ with $B>d$ zeroes, so it has to be the zero polynomial modulo $p$, meaning that
 \[
  p|a_{jk}\quad \forall k\in\{1,\dotsc,d\} \quad\text{and}\quad p|(a_{j0}-\nu).
 \]
 Since $p$ separates $a_{j,\deg(P_j)}$ from 0 and the other leading coefficients, we find that 
 \[
  a_{j,\deg(P_j)}\equiv 0 \mods p \quad\text{and}\quad a_{j,\deg(P_j)}\not\equiv t\mods p \quad 
  \forall t\in (T\cup\{0\})\backslash\{a_{j,\deg(P_j)}\}.
 \]
 This is only possible if $a_{j,\deg(P_j)}=0$, which is a contradiction. Thus we obtain that
 \[
  |\{\omega\mods p:\omega\in S_j\}|>1,
 \]
 so $p$ hashes all $S_j$ well. Consequently, all of the more than $\frac{M}{2}$ primes $u_1,\dotsc,u_M$ that separate the leading coefficients from one another and from $0$ hash all support sets well.
\end{proof}
These lemmas imply that applying Algorithm 3 in \cite{Iw-arxiv} to the restrictions of the input function to frequencies congruent to $\nu$ modulo $M$ primes as defined in Lemma \ref{lem:u_m} yields the correct frequencies and Fourier coefficients in the majority of the cases, since the algorithm works well as long as the input function is sparse enough.

\section{Algorithm for Polynomially Structured Sparse Functions} 
\label{sec:GeneralAlgResults}
Before we can begin to develop our algorithm for polynomially structured sparse functions, we need to develop the notation necessary in order to apply Algorithm 3 in \cite{Iw-arxiv} to the frequency restrictions.
\subsection{Measurement Matrices}
\begin{dfn}[Notation]\label{dfn:notation2}
Let $f\colon[0,2\pi]\rightarrow\CC$ be $P(n,d,B)$-structured sparse with bandwidth $N$. Let $B<u_1<\dotsb<u_M$ be prime and $s_1<\dotsb<s_K$ pairwise relatively prime natural numbers such that there exist $L$ natural numbers $t_1<\dotsb<t_L<s_1$ which satisfy that the set
 \[
  \{t_1,\dotsc,t_L,s_1,\dotsc,s_K,u_1,\dotsc,u_M\}
 \]
 is pairwise relatively prime and that
 \[
  \prod_{l=1}^Lt_l\geq\frac{N}{s_1u_1}.
 \]
 We set $\kappa:=\sum_{k=1}^K s_k$, $\lambda:=1+\sum_{l=1}^Lt_l$ and $\mu:=\sum_{m=1}^M u_m$. Further, we define
 \[
  q:=\lcm(N,s_1,\dotsc,s_K,t_1,\dotsc,t_L,u_1,\dotsc,u_M).
 \]
\end{dfn}
From now on we always assume that occurring natural numbers $q,s_1,\dotsc,s_K,t_1,\dotsc,t_L,$ $u_1,\dotsc,u_M$ comply with Definition \ref{dfn:notation2}. In the algorithm we will develop in this section the numbers $K$, $L$ and $M$ will depend on the number $n$ of evaluated polynomials, their maximal degree $d$, the number $B$ of evaluations and the bandwidth $N$. We show in Remark \ref{rem:estimates} how $K$, $L$ and $M$ can be chosen. 
\begin{dfn}
 For a given $2\pi$-periodic function $f$ and $m\in\NN$ define the sample vector $\A_m\in\CC^m$ by
 \[
  \A_m(j):=f\left(\frac{2\pi j}{m}\right) \quad \forall j\in\{0,\dotsc,m-1\}.
 \]
\end{dfn}
\begin{dfn}[Discrete Fourier Transform]
For $m\in\NN$ we denote the discrete Fourier transform of a vector $\x\in\CC^m$ by
\[
 \widehat{\x}=\F_m\x,
\]
where $\F_m:=\frac{1}{m}\left(\omega_m^{jk}\right)_{j,k=0}^{m-1}\in\CC^{m\times m}$ is the $m$-th Fourier matrix and $\omega_m:=e^{\frac{-2\pi i}{m}}$ is the $m$-th primitive root of unity.
\end{dfn}
In order to apply Algorithm 3 in \cite{Iw-arxiv} to the restrictions to frequencies that are congruent to $\nu$ modulo $u_m$ for all residues $\nu\in\{0,\dotsc,u_m-1\}$ and all $m\in\{1,\dotsc,M\}$, we need to transform the vector $\widehat{\A_q}$ into a matrix with sparse columns whose entries correspond to the frequencies that are congruent to $\nu$ modulo $u_m$. For this purpose and also for later use we recall the definition of the row-wise Hadamard tensor product.
\begin{dfn}[Row-wise Hadamard Product]
Let $\A\in\CC^{\kappa\times m}$, $\B\in\CC^{\lambda\times m}$. Then the row-wise Hadamard product $\A\circledast \B\in\CC^{(\kappa\cdot\lambda)\times m}$ is given by
\[
 (\A\circledast \B)_{jl}=\A_{j\mods\kappa,l}\cdot \B_{\frac{j-(j\mods \kappa)}{\kappa},l}, \quad j\in\{0,\dotsc,\kappa\lambda-1\}, \, l\in\{0,\dotsc,m-1\},
\]
i.e., the first $\kappa$ rows are given as the Hadamard product of all rows of $\A$ with the first row of $\B$, the second $\kappa$ rows as the Hadamard product of all rows of $\A$ with the second row of $\B$ and so forth.
\end{dfn}
\begin{diss}
\begin{ex}
 For $\kappa=2$, $\lambda=3$ and $m=3$ consider the row tensor product of the matrices
 \[
  A=\begin{pmatrix}
     a_{00} & a_{01} & a_{02} \\
     a_{10} & a_{11} & a_{12}
    \end{pmatrix}
\quad\text{and}\quad B=\begin{pmatrix}
                        b_{00} & b_{01} & b_{02} \\
                        b_{10} & b_{11} & b_{12} \\
                        b_{20} & b_{21} & b_{22}
                       \end{pmatrix}.
\]
Then we obtain
\[
 A\circledast B=\begin{pmatrix}
                 a_{00}b_{00} & a_{01}b_{01} & a_{02}b_{02} \\
                 a_{10}b_{00} & a_{11}b_{01} & a_{12}b_{02} \\
                 a_{00}b_{10} & a_{01}b_{11} & a_{02}b_{12} \\
                 a_{10}b_{10} & a_{11}b_{11} & a_{12}b_{12} \\
                 a_{00}b_{20} & a_{01}b_{21} & a_{02}b_{22} \\
                 a_{10}b_{20} & a_{11}b_{21} & a_{12}b_{22} \\                 
                \end{pmatrix}.
\]
\end{ex}
\end{diss}
\begin{lem}\label{lem:tensorproduct}
 Let $\A\in\CC^{\kappa\times m}$, $\B\in\CC^{\lambda\times m}$. Then every row of $\A\circledast \B$ is given as the row tensor product of a row of $\A$ with a row of $\B$.
\end{lem}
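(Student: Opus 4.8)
The plan is to prove the claim by directly unwinding the definition of the row-wise Hadamard product. The key observation is that a single row of a matrix may be regarded as a $1\times m$ matrix, so the ``row tensor product of a row of $\A$ with a row of $\B$'' is precisely the special case $\kappa=\lambda=1$ of the definition of $\circledast$, which reduces to the ordinary entrywise (Hadamard) product of the two rows. Thus the statement amounts to checking that each row of $\A\circledast\B$ agrees entrywise with such a product for a suitably chosen pair of rows.

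Concretely, I would fix an arbitrary row index $j\in\{0,\dotsc,\kappa\lambda-1\}$ and set $r:=j\mods\kappa$ together with $s:=\frac{j-(j\mods\kappa)}{\kappa}=\lfloor j/\kappa\rfloor$, the latter equality holding because $j-(j\mods\kappa)$ is divisible by $\kappa$. Writing $\mathbf{a}$ for the $r$-th row of $\A$ and $\mathbf{b}$ for the $s$-th row of $\B$, the defining formula for $\circledast$ shows that the $l$-th entry of the $j$-th row of $\A\circledast\B$ equals $\A_{r,l}\cdot\B_{s,l}$ for every $l\in\{0,\dotsc,m-1\}$. On the other hand, applying the same definition to the two $1\times m$ matrices $\mathbf{a}$ and $\mathbf{b}$ gives $(\mathbf{a}\circledast\mathbf{b})_{0,l}=\mathbf{a}_{0,l}\cdot\mathbf{b}_{0,l}=\A_{r,l}\cdot\B_{s,l}$. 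Since these coincide for all $l$, the $j$-th row of $\A\circledast\B$ is exactly the row tensor product of the $r$-th row of $\A$ with the $s$-th row of $\B$, which is the assertion.

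The only genuine bookkeeping is verifying that $r$ and $s$ are legitimate row indices, i.e.\ that $r\in\{0,\dotsc,\kappa-1\}$ and $s\in\{0,\dotsc,\lambda-1\}$; this follows immediately since $j\mods\kappa<\kappa$ and $\lfloor j/\kappa\rfloor\leq\lfloor(\kappa\lambda-1)/\kappa\rfloor=\lambda-1$. I expect no real obstacle here: the entrywise identity is immediate from the definition, and the map $j\mapsto(r,s)=(j\mods\kappa,\lfloor j/\kappa\rfloor)$ is just the standard base-$\kappa$ correspondence between $\{0,\dotsc,\kappa\lambda-1\}$ and $\{0,\dotsc,\kappa-1\}\times\{0,\dotsc,\lambda-1\}$. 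If desired, one could additionally remark that this correspondence is a bijection, so that in fact every one of the $\kappa\lambda$ possible pairings of a row of $\A$ with a row of $\B$ occurs exactly once among the rows of $\A\circledast\B$.
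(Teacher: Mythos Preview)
Your proof is correct and follows essentially the same approach as the paper's own argument: fix a row index $j$, set $r=j\mods\kappa$ and $s=\lfloor j/\kappa\rfloor$, and observe directly from the definition that the $j$-th row of $\A\circledast\B$ is the entrywise product of the $r$-th row of $\A$ with the $s$-th row of $\B$. The paper's version is more terse and omits the explicit index-range verification and the bijectivity remark, but the core computation is identical.
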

\begin{diss}
\begin{proof}
 Let $i\in\{0,\dotsc,\kappa\lambda-1\}$. Then we find for the $i$-th row of $A\circledast B$:
 \begin{align*}
  (A\circledast B)_{i,\cdot}&=\left((A\circledast B)_{ij}\right)_{j=0}^{n-1}
  =\left(A_{i\mods\kappa,j}\cdot B_{\frac{i-(i\mods \kappa)}{\kappa},j}\right)_{j=0}^{m-1} \\
  &=\left(A_{i\mods\kappa,\cdot}\right)\circledast\left(B_{\frac{i-(i\mods\kappa)}{\kappa},\cdot}\right).
 \end{align*}
\end{proof}
\end{diss}
\begin{dfn}[Measurement Matrices I]\label{dfn:matrix_I}
 For $t_1,\dotsc,t_L,s_1,\dotsc,s_K,u_1,\dotsc,u_M$ from Definition \ref{dfn:notation2} we construct a special $\mu\times N$ measurement matrix $\Mu$, analogously to the measurement matrix concept used in \cite{Iw-arxiv}. The matrix consists of rows of ones and zeroes, where an entry of  a row is one if and only if its column index is congruent to a certain residue modulo $u_m$. Let $m\in\{1,\dotsc,M\}$ and $\nu\in\{0,\dotsc,u_m-1\}$ be a fixed residue modulo $u_m$. Then we define the row $\rr_{u_m,\nu}$ by
 \begin{equation}\label{eq:r}
  (\rr_{u_m,\nu})_j:=\delta\left((j-\nu)\mods u_m\right)=\begin{cases}
                                                 1, \quad\text{if } j\equiv \nu \mods u_m, \\
                                                 0, \quad\text{otherwise,}
                                                \end{cases}
 \end{equation}
 and set
 \[
  \mathcal{M}_{u_1,M}:=\begin{pmatrix*}[l]
                       \rr_{u_1,0} \\
                       \vdots \\
                       \rr_{u_1,u_1-1} \\
                       \rr_{u_2,0} \\
                       \vdots \\
                       \rr_{u_M,u_M-1}
                      \end{pmatrix*}.
 \]
We define the extension $\HHH$ of $\Mu$ to a $\mu\times q$ matrix by extending all rows $\rr_{u_m,\nu}$ to columns indexed by $j\in\{0,\dotsc,q-1\}$, as given by (\ref{eq:r}). Then, since $u_1,\dotsc,u_M$ divide $q$, we have that
\begin{equation*}
\HHH=\left(
\begin{array}{ccccccccccc}
\cline{1-6} \cline{11-11}
\multicolumn{1}{|c}{I_{u_1}} & \multicolumn{1}{|c}{I_{u_1}} & \multicolumn{1}{|c}{I_{u_1}} & \multicolumn{1}{|c}{I_{u_1}} & \multicolumn{1}{|c}{I_{u_1}} & \multicolumn{1}{|c|}{I_{u_1}} & \dots & \dots & \dots & \dots & \multicolumn{1}{|c|}{I_{u_1}} \\ \cline{1-6} \cline{10-11}
\multicolumn{2}{|c}{\multirow{2}{*}{$I_{u_2}$}} & \multicolumn{2}{|c}{\multirow{2}{*}{$I_{u_2}$}} & \multicolumn{2}{|c|}{\multirow{2}{*}{$I_{u_2}$}} & \multirow{2}{*}{\dots} & \multirow{2}{*}{\dots} & \multirow{2}{*}{\dots} & \multicolumn{2}{|c|}{\multirow{2}{*}{$I_{u_2}$}}  \\
\multicolumn{2}{|c}{} & \multicolumn{2}{|c}{} & \multicolumn{2}{|c|}{} & & & & \multicolumn{2}{|c|}{} \\ \cline{1-6} \cline{9-11}
\multicolumn{3}{|c}{\multirow{3}{*}{$I_{u_3}$}} & \multicolumn{3}{|c|}{\multirow{3}{*}{$I_{u_3}$}} & \multirow{3}{*}{\dots} & \multirow{3}{*}{\dots} & \multicolumn{3}{|c|}{\multirow{3}{*}{$I_{u_3}$}} \\ 
\multicolumn{3}{|c}{} & \multicolumn{3}{|c|}{} & & & \multicolumn{3}{|c|}{} \\
\multicolumn{3}{|c}{} & \multicolumn{3}{|c|}{} & & & \multicolumn{3}{|c|}{} \\ \cline{1-6} \cline{9-11}
\multicolumn{3}{c}{\vdots} & & &&&& \multicolumn{3}{c}{\vdots} \\ \cline{1-4} \cline{8-11}
\multicolumn{4}{|c|}{\multirow{4}{*}{$I_{u_M}$}} & \multirow{4}{*}{\dots} & \multirow{4}{*}{\dots} & \multirow{4}{*}{\dots} & \multicolumn{4}{|c|}{\multirow{4}{*}{$I_{u_M}$}} \\ 
\multicolumn{4}{|c|}{} & & & & \multicolumn{4}{|c|}{} \\
\multicolumn{4}{|c|}{} & & & & \multicolumn{4}{|c|}{} \\
\multicolumn{4}{|c|}{} & & & & \multicolumn{4}{|c|}{} \\ \cline{1-4} \cline{8-11}
\end{array} 
\right),
\end{equation*}
where $I_{u_m}$ denotes the $u_m\times u_m$ identity matrix.

\begin{paper}
 Further, as in \cite{Iw-arxiv}, we define the $\kappa\times N$ matrix $\Ms$ and the $(\lambda-1)\times N$ matrix $\Mt$, consisting of the rows corresponding to the possible residues modulo all the $s_k$ and $t_l$, respectively,
\[
 \Ms:=\begin{pmatrix}
       \rr_{s_1,0} \\
       \vdots \\
       \rr_{s_K,s_K-1}
      \end{pmatrix} 
 \quad\text{and}\quad
  \Mt:=\begin{pmatrix}
       \rr_{t_1,0} \\
       \vdots \\
       \rr_{t_L,t_L-1}
      \end{pmatrix}.
\]
We set $\Nt$ to be the $\lambda\times N$ matrix whose first row contains only ones and whose other rows are given by $\Mt$,
\[
 \Nt:=\begin{pmatrix}
       \boldsymbol{1}_N \\
       \Mt
      \end{pmatrix},
\]
and define the $(\kappa\cdot\lambda)\times N$ row-wise Hadamard product $\RRR:=\Ms\cast\Nt$ of $\Ms$ and $\Nt$ and its extension $\GGG$ to a $(\kappa\cdot\lambda)\times q$ matrix. Because all $s_k$, $t_l$ and $u_m$ thus have to divide $q$, we set $q=\lcm(N,s_1,\dotsc,s_K,t_1,\dotsc,t_L)$ as the length of the sample vector.
\end{paper}
\end{dfn}
\begin{diss}
\begin{ex}
We now give an example for this type of measurement matrix for $u_1=2$, $u_2=3$, $u_3=5$ and $N=30$. For $\mathcal{M}_{2,3}$ we obtain:
\[
 \bordermatrix{\boldsymbol{j\in\{0,\dotsc,29\}} & \boldsymbol{0} & \boldsymbol{1} & \boldsymbol{2} & \boldsymbol{3} & \boldsymbol{4} & \boldsymbol{5} & \boldsymbol{6} & \boldsymbol{7} & \boldsymbol{8} & \boldsymbol{9} & \boldsymbol{\hdots} & \boldsymbol{29} \cr
      \boldsymbol{j\equiv0\mods2} & 1 & 0 & 1 & 0 & 1 & 0 & 1 & 0 & 1 & 0 & \hdots & 0 \cr
      \boldsymbol{j\equiv1\mods2} & 0 & 1 & 0 & 1 & 0 & 1 & 0 & 1 & 0 & 1 & \hdots & 1 \cr
      \boldsymbol{j\equiv0\mods3} & 1 & 0 & 0 & 1 & 0 & 0 & 1 & 0 & 0 & 1 & \hdots & 0 \cr
      \boldsymbol{j\equiv1\mods3} & 0 & 1 & 0 & 0 & 1 & 0 & 0 & 1 & 0 & 0 & \hdots & 0 \cr
      \boldsymbol{j\equiv2\mods3} & 0 & 0 & 1 & 0 & 0 & 1 & 0 & 0 & 1 & 0 & \hdots & 1 \cr
      \boldsymbol{j\equiv0\mods5} & 1 & 0 & 0 & 0 & 0 & 1 & 0 & 0 & 0 & 0 & \hdots & 0 \cr
      \boldsymbol{j\equiv1\mods5} & 0 & 1 & 0 & 0 & 0 & 0 & 1 & 0 & 0 & 0 & \hdots & 0 \cr
      \vdots			  &   &   &   &   &   &   &   &   &   &   & \vdots & \cr
      \boldsymbol{j\equiv4\mods5} & 0 & 0 & 0 & 0 & 1 & 0 & 0 & 0 & 0 & 1 & \hdots & 1
      }.
\]
\end{ex}
\end{diss}
\begin{rem}[Restriction of $\widehat{\A_q}$]
If we compute the row-wise Hadamard product of $\HHH$ with $(\widehat{\A_q})^T\in\CC^{1\times q}$, every row of $\HHH\cast(\widehat{\A_q})^T$ is, by Lemma \ref{lem:tensorproduct}, given as the row-wise Hadamard product of a row of $\HHH$ and  $(\widehat{\A_q})^T$. Thus every column $\bfrho^T_{u_m,\nu}$ of $(\HHH\cast(\widehat{\A_q})^T)^T\in\CC^{q\times\mu}$ corresponds to a residue $\nu$ modulo a prime $u_m$. This column only contains nonzero entries at frequencies that are congruent to $\nu$ modulo $u_m$,
 \begin{align*}
    &\bfrho^T_{u_m,\nu}(j):=\left(\rr_{u_m,\nu}\cast(\widehat{\A_q})^T\right)^T_j=(\rr_{u_m,\nu})^T_j\cdot(\widehat{\A_q})_j \\
   =&\delta\left((j-\nu)\mods u_m\right)\cdot\widehat{\A_q}(j) 
   =\begin{cases}
     \widehat{\A_q}(j), \quad &j\equiv\nu\mods u_m, \\
     0, \quad &\text{otherwise.}
    \end{cases}
 \end{align*}
  This means that the column $\bfrho_{u_m,\nu}^T$ of $(\HHH\cast(\widehat{\A_q})^T)^T$ is the restriction of $\widehat{\A_q}$ to frequencies congruent to $\nu$ modulo $u_m$, which is at most $dn$-sparse for a good hashing prime $u_m$. Thus for more than $M/2$ of the $u_m$ we can apply Algorithm 3 in \cite{Iw-arxiv} with sparsity $dn$ column by column.
\end{rem}
\subsection{Required Technical Background}
\begin{paper}
 In the following we give a short description of Algorithm 3 in \cite{Iw-arxiv} and summarize some of the results proven therein. Said SFT algorithm reconstructs the energetic frequencies and the corresponding Fourier coefficients of a sparse input function $f\colon [0,2\pi]\rightarrow\CC$ with bandwidth $N$ from the Fourier transforms of vectors consisting of $s_kt_l\ll N$ equispaced samples of $f$, where $s_k$ and $t_l$ are small primes depending on the bandwidth and sparsity of the function. The energetic frequencies are reconstructed from their residues modulo $s_k$ and $t_1,\dotsc,t_L$ with the help of the CRT, which is why the primes have to satisfy 
 \[
  \prod_{l=1}^{L-1}t_l<\frac{N}{s_1}\leq\prod_{l=1}^Lt_l.
 \]
 For a general $d$-sparse input function the method introduced in \cite{Iw,Iw-arxiv} is not guaranteed to work for any prime $s_k$. However, setting $K=8d\lfloor\log_{s_1}N\rfloor+1$ and choosing $s_1,\dotsc,s_K$ as the $K$ smallest primes greater than $d$ and $t_L$, for more than $K/2$ of them all energetic frequencies are correctly reconstructed from their residues. These can be found by comparing the entries of $\widehat{\A_{s_k}}$ and $\widehat{\A_{s_kt_l}}$ that correspond to the same frequency for all $l$. The coefficient estimates are then obtained by taking the medians over the $K$ coefficient estimates found for the $s_k$.
\end{paper}
\begin{diss}
In the following we give a short repetition of the SFT algorithm presented in \cite{Iw-arxiv}. Assume that the input function $f$ with bandwidth $N$ is at most $d$ sparse and that there exist pairwise relatively prime numbers $t_1<\dotsb<t_L<s_1<\dotsb<s_K$ with
\[
 \prod_{l=1}^Lt_l\geq\frac{N}{s_1}.
\]
Analogously to $\Mu$ we define the $\kappa\times N$ matrix $\Ms$ and the $(\lambda-1)\times N$ matrix $\Mt$ consisting of the rows corresponding to the possible residues modulo all the $s_k$ and $t_l$, respectively,
\[
 \Ms:=\begin{pmatrix}
       \rr_{s_1,0} \\
       \vdots \\
       \rr_{s_K,s_K-1}
      \end{pmatrix} 
 \quad\text{and}\quad
  \Mt:=\begin{pmatrix}
       \rr_{t_1,0} \\
       \vdots \\
       \rr_{t_L,t_L-1}
      \end{pmatrix}.
\]
We set $\Nt$ to be the $\lambda\times N$ matrix whose first row contains only ones and whose other rows are given by $\Mt$,
\[
 \Nt:=\begin{pmatrix}
       \boldsymbol{1}_N \\
       \Mt
      \end{pmatrix}.
\]
Additionally, we introduce the $(\kappa\cdot\lambda)\times N$ row tensor product $\RRR:=\Ms\cast\Nt$ of $\Ms$ and $\Nt$ and its extension $\GGG$ to a $q\times N$ matrix, where $q:=\lcm(N,s_1,\dotsc,s_K,t_1,\dotsc,t_L)$.
\todo[inline]{Add a short description of what the algorithm does. Maybe the one from the talk?}
\begin{algorithm}
\label{alg:sparsefourierapprox}
\caption{Sparse Fourier Approximation}
\begin{algorithmic}[1]
\renewcommand{\algorithmicrequire}{\textbf{Input:}}
\renewcommand{\algorithmicensure}{\textbf{Output:}}
\Require Function $f+\eta$, $d,N\in\NN$, $K=4\cdot 2d\lfloor\log_{s_1}N\rfloor+1$ and pairwise relatively prime $s_1<\dotsb<s_K,t_1<\dotsb<t_L$ with $t_L<s_1$ and $\prod_{l=1}^Lt_l\geq\frac{N}{s_1}$
\Ensure 
\State Initialize $R=\emptyset$, $\x_R=\mathbf{0}_N$, $q=\lcm(N,s_1,\dotsc,s_K,t_1,\dotsc,t_L)$.
\State $\GGG\cdot\widehat{\A_q}\gets(\widehat{\A_{s_1}}^T,\widehat{\A_{s_1t_1}}^T,\dotsc,\widehat{\A_{s_1t_L}}^T,\widehat{\A_{s_2}}^T,\widehat{\A_{s_2t_1}}^T,\dotsc,\widehat{\A_{s_Kt_L}}^T)^T$ \label{line:dft_sparse}
\State $\EEE\cdot\widehat{\A_q}\gets(\widehat{\A_{s_1}}^T,\widehat{\A_{s_2}}^T,\dotsc,\widehat{\A_{s_K}}^T)^T\text{ (the } \kappa \text{ entries of } \GGG\cdot\widehat{\A_q}$ that approximate $\Ms\cdot\cc(N)$)
\Statex \begin{center} \textsc{Identification of the Energetic Frequencies} \end{center}
\For{$k$ from 1 to $K$} \label{line:identification_start_sparse}
\For{$h$ from 0 to $s_k-1$} \label{line:fixed_res}
\For{$l$ from 1 to $L$}
\State $b_{\text{min}}\gets\underset{b\in\{0,\dotsc,{t}_l-1\}}{\argmin}\left|\left(\EEE\cdot\widehat{\A_q}\right)_{r_{s_k,h}}-\left(\GGG\cdot\widehat{\A_q}\right)_{r_{s_kt_l,h+bs_k}}\right|$
\State $a_{l,k,h}\gets(h+b_{\text{min}}s_k)\mods t_l$ \label{line:argmin_sparse}
\EndFor
\State Reconstruct $\omega_{k,h}$ from $\omega_{k,h}\equiv h\mods s_k,\omega_{k,h}\equiv a_{1,k,h}\mods t_l, \;l\in\{1,\dotsc,L\}$\label{line:reconstruction_sparse}
\EndFor
\EndFor \label{line:identification_end_sparse}
\Statex \begin{center} \textsc{Fourier Coefficient Estimation} \end{center}
\For{\textbf{each} $\omega$ reconstructed more than $\frac{K}{2}$ times}
\State $\re\left(x_{\omega}\right)\gets\underset{\substack{k\in\{1,\dotsc,K\} \\ l\in\{0,\dotsc,L\}}}{\median}\left\{\re\left(\left(\GGG\cdot\widehat{\A_q}\right)_{r_{s_kt_l,\omega\mods s_kt_l}}\right)\right\}$ \label{line:re_sparse}
\State $\im\left(x_{\omega}\right)\gets\underset{\substack{k\in\{1,\dotsc,K\} \\ l\in\{0,\dotsc,L\}}}{\median}\left\{\im\left(\left(\GGG\cdot\widehat{\A_q}\right)_{r_{s_kt_l,\omega\mods s_kt_l}}\right)\right\}$ \label{line:im_sparse}
\EndFor \label{line:freq_end_sparse}
\State Sort the nonzero entries by magnitude s.t. $\left|x_{\omega_1}\right|\geq\left|x_{\omega_2}\right|\geq\dotsb$.
\State $R\gets\{\omega_1,\omega_2,\dotsc,\omega_{2d}\}$ \label{line:R_m_sparse}

\State Output: $(R,\x_R)$
\end{algorithmic}
\end{algorithm}
\end{diss}
The following remark summarizes the main results for Algorithm 3 in \cite{Iw-arxiv} that are relevant for this paper.
\begin{rem}\label{rem:sparsesummary}
 Let $f\colon[0,2\pi]\to\CC$ and $d<N\in\NN$.
 \begin{enumerate}
  \item (Lemma 5 in \cite{Iw-arxiv}) By the CRT, every row of $\GGG$ is of the form 
 \[
  \left(\rr_{s_kt_l,h}\right)_j=\left(\rr_{s_k,h\mods s_k}\cast \rr_{t_l,h\mods t_l}\right)_j=\begin{cases}
                                                                                         1, & \text{if } j\equiv h\mods s_kt_l \\
                                                                                         0, & \text{otherwise}
                                                                                        \end{cases},
 \]
 for $h\in\{0,\dotsc,s_kt_l-1\}$, $k\in\{1,\dotsc,K\}$ and $l\in\{0,\dotsc,L\}$, where $t_0:=1$ in order to have the same notation also for rows of the form  $\rr_{s_k,h\mods s_k}\cast \boldsymbol{1}_N$.
  \item (Lemma 6 in \cite{Iw-arxiv}) If $\omega\in\intvl$ is such that 
  \[
   |c_\omega|>4\cdot\left(\frac{1}{2d}\left\|\cc(N)-\cc_{2d}^{\text{opt}}(N)\right\|_1+\left\|\cc(f)-\cc(N,\ZZ)\right\|_1\right),
  \]
  then $\omega$ will be reconstructed more than $\frac{K}{2}$ times. \label{item:sparsesumm_addfreq}
  \item (Proof of Theorem 7 in \cite{Iw-arxiv}) If $\omega$ 
  is reconstructed more than $\frac{K}{2}$ times, then
  \[
   |x_\omega-c_\omega|
   \leq\sqrt{2}\left(\frac{1}{2d}\left\|\cc(N)-\cc_{2d}^{\text{opt}}(N)\right\|_1+\left\|\cc(f)-\cc(N,\ZZ)\right\|_1\right).
  \] \label{item:sparsesum_approx}

  \item (Theorem 7 in \cite{Iw-arxiv}) Algorithm 3 in \cite{Iw-arxiv} will output an $\x_R\in\CC^N$ satisfying
  \begin{align*}
   &\|\cc(N)-\x_R\|_2 \\
   \leq &\left\|\cc(N)-\cc_d^{\text{opt}}(N)\right\|_2+\frac{11}{\sqrt{d}}\left\|\cc(N)-\cc_{2d}^{\text{opt}}(N)\right\|_1
   +22\sqrt{d}\left\|\cc(f)-\cc(N,\ZZ)\right\|_1
  \end{align*}
  in a runtime of 
  \[
   \mathcal{O}\left(\frac{d^2\log^2N\log(d\log N)\log^2\frac{N}{d}}{\log^2d\log\log\frac{N}{d}}\right).
  \]
\begin{diss}
  Lines \ref{line:identification_start_sparse} to \ref{line:freq_end_sparse} have a runtime of 
  \[
   \mathcal{O}\left(\sum_{k=1}^Ks_k\log s_k+Kdn\sum_{l=0}^Lt_l+Kdn\log(Kdn)+Kdn\log K+dn\log dn\right).
  \]
\end{diss}
 \end{enumerate}
\end{rem}
\subsection{Application to Polynomially Structured Sparse Functions} \label{ch:poly_sparse}
We can now apply Algorithm 3 in \cite{Iw-arxiv} with sparsity $dn$ to the columns $\bfrho_{u_m,\nu}^T$ of $(\HHH\cast(\widehat{\A_q})^T)^T$. Recall that $\bfrho_{u_m,\nu}^T$ is only guaranteed to be at most $dn$-sparse if $u_m$ hashes all support sets $S_1,\dotsc,S_n$ well. This means that only for columns corresponding to those primes the algorithm will return all energetic frequencies and good estimates for their Fourier coefficients. Hence we have to apply the algorithm to every single column of $(\HHH\cast(\widehat{\A_q})^T)^T$ and choose those frequencies and coefficient estimates that appear for the more than $\frac{M}{2}$ well-hashing $u_m$. 
\begin{diss}
If we want to apply Algorithm \ref{alg:sparsefourierapprox} to the columns of $\HHH\cast(\widehat{\A_q})^T$, we need to define $\GGG$ using the pairwise relatively prime numbers $t_1<\dotsb<t_L<s_1<\dotsb<s_K$ and our possible hashing primes $B<u_1<\dotsc<u_M$ with 
\[
 \prod_{l=1}^Lt_l\geq\frac{N}{s_1u_1} \quad\text{and}\quad q=\lcm(N,s_1,\dotsc,s_K,t_1,\dotsc,t_L,u_1,\dotsc,u_M)
\]
from Definition \ref{dfn:notation2}. In this case, if we apply the algorithm to the column corresponding to frequencies with the residue $\nu$ modulo $u_m$, we also want to use that $\omega_{k,h}\equiv\nu\mods u_m$ for the reconstruction in line \ref{line:reconstruction_sparse}.
\end{diss}

In Algorithm 3 in \cite{Iw-arxiv}, estimates for the Fourier coefficients $c_\omega$ of the input function $f$ are calculated from certain entries of $\GGG\cdot\widehat{\A_q}$. These entries can be obtained in a fast way from $f$ by computing DFTs of the vectors $\A_{s_kt_l}$.
\begin{rem}
For polynomially structured sparse input functions we now have to show that the entries of $\GGG\cdot(\HHH\cast(\widehat{\A_q})^T)^T$ can also be calculated fast. As we want to use the residues modulo the $u_m$ for the reconstruction as well, an idea similar to the one from \cite{Iw-arxiv} leads to DFTs of the $s_kt_lu_m$-length sample vectors $\A_{s_kt_lu_m}$ from Definition \ref{dfn:notation2}.
\begin{diss}
 ,
 \[
  \A_{s_kt_lu_m}=\left(f\left(\frac{2\pi j}{s_kt_lu_m}\right)\right)_{j=0}^{s_kt_lu_m-1},
 \] 
 where $t_0:=1$ in order to deal with rows all rows of $\GGG$ using the same notation.
\end{diss}
Consider an entry of $\GGG\cdot(\HHH\cast(\widehat{\A_q})^T)^T\in\CC^{\kappa\lambda\times\mu}$ that is given as the product of a row of $\GGG$, by Remark \ref{rem:sparsesummary} of the form $\rr_{s_kt_l,h}$ for a residue $h$ modulo $s_kt_l$, with a column $\bfrho^T_{u_m,\nu}$ of $(\HHH\cast(\widehat{\A_q})^T)^T$ for a residue $\nu$ modulo $u_m$,
 \begin{align}
  \rr_{s_kt_l,h}\cdot\bfrho^T_{u_m,\nu} 
  =&\sum_{j=0}^{q-1} \rr_{s_kt_l,h}(j)\bfrho^T_{u_m,\nu}(j) \notag \\
  =&\sum_{j=0}^{q-1}\delta\left((j-h)\mods s_kt_l\right)\delta\left((j-\nu)\mods u_m\right)\cdot\widehat{\A_q}(j). \label{eq:modulo}
 \end{align}
  As there can only be nonzero summands in (\ref{eq:modulo}) if $j\equiv h\mods s_kt_l$ and $j\equiv \nu\mods u_m$, we find with the CRT that $j$ has to be of the form 
 \[
  j=\tau+j's_kt_lu_m, \quad j'\in\left\{0,\dotsc,\frac{q}{s_kt_lu_m}-1\right\}.
 \]
 Then,
 \begin{align*}
  \rr_{s_kt_l,h}\cdot\bfrho^T_{u_m,\nu}
  =&\sum_{j'=0}^{\frac{q}{s_kt_lu_m}-1}\widehat{\A_q}(\tau+j'\cdot s_kt_lu_m) \notag \\
  =&\sum_{j'=0}^{\frac{q}{s_kt_lu_m}-1}\frac{1}{q}\sum_{b=0}^{q-1}\A_q(b)e^{\frac{-2\pi ib(\tau+j's_kt_lu_m)}{q}} \notag \\
  =&\sum_{b=0}^{q-1}\frac{1}{q}\A_q(b)e^{\frac{-2\pi ib\tau}{q}}\sum_{j'=0}^{\frac{q}{s_kt_lu_m}-1}e^{\frac{-2\pi ibj's_kt_lu_m}{q}} \notag \\
  =&\sum_{b=0}^{q-1}\frac{1}{s_kt_lu_m}f\left(\frac{2\pi b}{q}\right)e^{\frac{-2\pi ib\tau}{q}}\cdot\delta\left(b\mods\frac{q}{s_kt_lu_m}\right) \notag \\ 
  =&\sum_{b'=0}^{s_kt_lu_m-1}\frac{1}{s_kt_lu_m}f\left(\frac{2\pi b'\frac{q}{s_kt_lu_m}}{q}\right)e^{\frac{-2\pi i\tau b'\frac{q}{s_kt_lu_m}}{q}} \notag \\
  =&\widehat{\A_{s_kt_lu_m}}(\tau). \notag
 \end{align*}
 By Bézout's identity, $1=\gcd(s_kt_l,u_m)=v\cdot s_kt_l+w\cdot u_m$ for some $v,w\in\ZZ$, and we obtain that $\tau$ satisfies
 \begin{equation}\label{eq:tau}
  \tau=\left((h-\nu)w\mods s_kt_l\right)\cdot u_m+\nu\in\{0,\dotsc,s_kt_lu_m-1\}.
 \end{equation}
 Thus we find that, in the column for the residue $\nu$ modulo $u_m$, for fixed $s_kt_l$ only the $s_kt_l$ different values $\widehat{\A_{s_kt_lu_m}}(\tau)$ with $\tau$ depending on $h$ as in (\ref{eq:tau}) are contained. Hence the column of $\GGG\cdot(\HHH\cast(\widehat{\A_q})^T)^T$ corresponding to $\nu$ modulo $u_m$ is of the form 
 \[
  \left(\widehat{\B^{m,\nu}_{s_1}}^T,\widehat{\B^{m,\nu}_{s_1t_1}}^T,\dotsc,\widehat{\B^{m,\nu}_{s_1t_L}}^T, 
\widehat{\B^{m,\nu}_{s_2}}^T,\dotsc,\widehat{\B^{m,\nu}_{s_Kt_L}}^T\right)^T,
 \]
 where 
 \begin{equation}\label{eq:bhat}
  \widehat{\B^{m,\nu}_{s_kt_l}}(j):=\widehat{\A_{s_kt_lu_m}}\left((j-\nu)w\mods s_kt_l)\cdot u_m+\nu\right) 
  \quad \forall j\in\{0,\dotsc,s_kt_l-1\}.
 \end{equation}
\begin{diss}
 Recall that 
 \[
  A_n(j)=f\left(\frac{2\pi j}{n}\right);
 \]
 so all that is required to compute the entries of $\GGG\cdot(\HHH\cast(\widehat{\A_q})^T)^T$ is the evaluation of the input function $f$ at points of the form $\frac{2\pi j}{s_kt_lu_m}$ for all $j$, $k$, $l$ and $m$.

 The entries of $\GGG\cdot(\HHH\cast(\widehat{\A_q})^T)^T$ can be calculated in a fast way, using $KLM$ DFTs of length $s_k t_l u_m$ with runtime $\mathcal{O}(s_kt_lu_m\cdot\log(s_kt_lu_m))$ for all $k,l,m$. 

 We have to bear in mind that, due to the fact that not all of the $u_m$ hash all frequency sets well, we have to apply Algorithm \ref{alg:sparsefourierapprox} to all columns $\rho_{u_m,\nu}^T$ and choose the frequencies that occur for more than $\frac{M}{2}$ of the possible hashing primes. 
\end{diss}
\begin{paper}
The entries of $\GGG\cdot(\HHH\cast(\widehat{\A_q})^T)^T$ can be calculated in a fast way, using $KLM$ DFTs of vectors of $s_kt_l u_m$ equispaced samples with runtime $\mathcal{O}(s_kt_lu_m\cdot\log(s_kt_lu_m))$ for all $k,l,m$. 
\end{paper}

 How exactly do we apply Algorithm 3 in \cite{Iw-arxiv} to the columns of $\GGG\cdot(\HHH\cast(\widehat{\A_q})^T)^T$? Until now we considered a fixed residue $h$ modulo $s_kt_l$ and a fixed residue $\nu$ modulo $u_m$. However, in line 7 of that algorithm we fix the residue $h'$ modulo $s_k$ of a frequency and find its residues modulo the $s_kt_l$ in line 9. In the setting of polynomially structured sparse functions this means that for a frequency $\omega$ with residue $\nu$ modulo $u_m$ and residue $h'$ modulo $s_k$ we have to find the corresponding residue modulo $s_kt_lu_m$ for every $l$. Then
 \[
  \tau':=\omega\mods s_ku_m=((h'-\nu)w\mods s_k)\cdot u_m+\nu 
 \]
 holds for the residue of $\omega$ modulo $s_ku_m$, where Bézout's identity implies that
 \[
  1=\gcd(s_k,u_m)=v'\cdot s_k+w'\cdot u_m
 \]
 for some $v',w'\in\ZZ$. Then the residue of $\omega$ modulo $s_kt_lu_m$ is of the form
 \[
  \omega\mods s_kt_lu_m= \tau'+b_{\text{min}}\cdot s_ku_m 
 \]
 for a $b_{\text{min}}\in\{0,\dotsc,t_l-1\}$, which is given as
 \begin{equation}\label{eq:bhatstuff}
  b_{\text{min}}:=\underset{b\in\{0,\dotsc,t_l-1\}}{\argmin}\left|\widehat{\A_{s_ku_m}}(\tau')-\widehat{\A_{s_kt_lu_m}}(\tau'+b\cdot s_ku_m)\right|.
 \end{equation}
 Finally, the residue of $\omega$ modulo $t_l$ is 
 \[
  a_{l}:=\omega\mods t_l=(\tau'+b_{\text{min}}\cdot s_ku_m)\mods t_l,
 \]
 and $\omega$ can be reconstructed from its residues $\omega\equiv \tau'\mods s_ku_m$, $\omega\equiv a_{1}\mods t_1$,$\dotsc$,$\omega\equiv a_{L}\mods t_L$. Recall the notion of the $\widehat{\B_{s_kt_l}\mn}$ introduced in (\ref{eq:bhat}). These vectors are defined such that if $\omega\equiv\nu\mods u_m$ and $\omega\equiv h\mods s_kt_l$, we have
 \[
  \widehat{\B_{s_kt_l}^{m,\nu}}(\omega\mods s_kt_l)=\widehat{\A_{s_kt_lu_m}}(\omega\mods s_kt_lu_m).
 \]
 To use this notation, we take the residues modulo $s_kt_lu_m$ again modulo $s_kt_l$, and obtain the following,
 \begin{align*}
  b_{\text{min}}=&\underset{b\in\{0,\dotsc,t_l-1\}}{\argmin}\left|\widehat{\A_{s_ku_m}}(\tau')-\widehat{\A_{s_kt_lu_m}}(\tau'+b\cdot s_ku_m)\right| \\
  =&\underset{b\in\{0,\dotsc,t_l-1\}}{\argmin}\left|\widehat{\B_{s_k}^{m,\nu}}(h')
  -\widehat{\B_{s_kt_l}^{m,\nu}}((\tau'+b\cdot s_ku_m)\mods s_kt_l)\right| \\
  =&\underset{b\in\{0,\dotsc,t_l-1\}}{\argmin}\left|\left(\EEE\cdot\widehat{\A_q}\right)_{\rr_{s_k,h'},\bfrho_{m,\nu}^T}
  -\left(\GGG\cdot\widehat{\A_q}\right)_{\rr_{s_kt_l,(\tau'+bs_ku_m)\mods s_kt_l},\bfrho_{m,\nu}^T}\right|.
 \end{align*}
\end{rem}
 Algorithm \ref{alg:fourierapprox} presents itself as a summary of the preceding considerations.
 \begin{algorithm}
\caption{Fourier Approximation}
\label{alg:fourierapprox}
\begin{algorithmic}[1]
\renewcommand{\algorithmicrequire}{\textbf{Input:}}
\renewcommand{\algorithmicensure}{\textbf{Output:}}
\Require Function $f+\eta$, $K=8dn\lfloor\log_{s_1}\frac{N}{u_1}\rfloor+1$, 
$M=2(n+1)\cdot\lfloor\log_{u_1}N\rfloor+1$ and pairwise relatively prime $s_1<\dotsb<s_K,t_1<\dotsb<t_L,B<u_1<\dotsb<u_M$ with $t_L<s_1$, $u_m$ prime and $\prod_{l=1}^Lt_l\geq\frac{N}{s_1u_1}$.
\Ensure $R,\x_R$, where $R$ contains the $nB$ frequencies $\omega$ with greatest magnitude coefficient estimates $x_R(\omega)$.
\State Initialize $R=\emptyset$, $\x_R=\mathbf{0}_N$, $q=\lcm(N,s_1,\dotsc,s_K,t_1,\dotsc,t_L,u_1,\dotsc,u_M)$ \label{line:initialization}
\State $\GGG\cdot(\HHH\cast(\widehat{\A_q})^T)^T\gets\biggl(\Bigl(\widehat{\B_{s_1}^{m,\nu}}^T,\widehat{\B_{s_1t_1}^{m,\nu}}^T,\dotsc,\widehat{\B_{s_Kt_L}^{m,\nu}}^T\Bigr)^T\biggr)_{m=1,\nu=0}^{M,u_m-1}$ \label{line:dft}
\State $\EEE\cdot(\HHH\cast(\widehat{\A_q})^T)^T\gets\biggl(\Bigl(\widehat{\B_{s_1}^{m,\nu}}^T,\dotsc,\widehat{\B_{s_K}^{m,\nu}}^T\Bigr)\biggr)_{m=1,\nu=0}^{M,u_m-1}$  
\For{$m$ from 1 to $M$} \label{line:freq_start}
\For{$\nu$ from 0 to $u_m-1$} \label{line:identification_start}
\State $(R^{(m,\nu)},\x^{(m,\nu)})\gets$ $2dn$ frequencies with largest magnitude coefficient estimates returned by Algorithm 3 in \cite{Iw-arxiv} applied to $\GGG\cdot(\HHH\cast(\widehat{\A_q})^T)^T_{\bfrho_{m,\nu}^T}$ and $\EEE\cdot(\HHH\cast(\widehat{\A_q})^T)^T_{\bfrho_{m,\nu}^T}$ with sparsity $dn$. \label{line:apply_sft}
\EndFor 
\EndFor \label{line:freq_end}
\For{\textbf{each} $\omega\in\bigcup_{m=1}^M\bigcup_{\nu=0}^{u_m-1}R^{(m,\nu)}$ found more than $\frac{M}{2}$ times} \label{line:approx_start}
\State $\re(x_\omega)=\underset{\subalign{ \nu&=0,\dotsc,u_m-1 \\ m&=1,\dotsc,M}}{\median} \left\{\re\left(x_{\tilde\omega}^{(m,\nu)}\right):\tilde\omega=\omega,\tilde\omega\in R\mn\right\}$ \label{line:coeffapprox_re}
\State $\im(x_\omega)=\underset{\subalign{ \nu&=0,\dotsc,u_m-1 \\ m&=1,\dotsc,M}}{\median}\left\{\im\left(x_{\tilde\omega}^{(m,\nu)}\right)\colon \tilde\omega=\omega,\tilde\omega\in R\mn\right\}$ \label{line:coeffapprox_im}
\EndFor \label{line:approx_end}
\State Sort the coefficients by magnitude s.t. $|x_{\omega_1}|\geq|x_{\omega_2}|\geq\dotsb$ \label{line:sort_est}
\State Output: $R=\{\omega_1,\dotsc,\omega_{nB}\},\x_R$ \label{line:addfreq2}
\end{algorithmic}
\end{algorithm}
\subsection{Results for Polynomially Structured Sparse Functions}\label{ch:general_results}
In order to obtain bounds on the accuracy and runtime of our algorithm, we can utilize some of the results developed in \cite{Iw-arxiv}, at least for the more than $\frac{M}{2}$ primes $u_m$ that hash all support sets $S_1,\dotsc,S_n$ well, i.e., the primes where the columns $\bfrho_{u_m,\nu}^T$ of $(\HHH\cast(\widehat{\A_q})^T)^T$ are guaranteed to be at most $dn$-sparse. 

Analogously to our previous notation we denote by $\cc(N,u_m,\nu)$, $\cc(N,\ZZ,u_m,\nu)$ and $\cc(u_m,\nu)$ the restrictions of $\cc(N)$, $\cc(N,\ZZ)$ and $\cc(f+\eta)$, respectively, to the frequencies congruent to $\nu$ modulo $u_m$. Further, recall that $\cc^{\text{opt}}_{2dn}(N,u_m,\nu)$ is the optimal $2dn$-term representation of $\cc(N,u_m,\nu)$.

The following lemma guarantees that all significantly enough frequencies will be found and their Fourier coefficients estimated well.
\begin{lem}\label{lem:approx}
 Let $N\in\NN$ and $f\colon[0,2\pi]\rightarrow\CC$ be $P(n,d,B)$-structured sparse with noise $\eta$ such that $\cc(\eta)\in\ell^1$ and $\|\cc(\eta)\|_\infty\leq\eps$. Let $u_1$ be a prime and $s_1$, $t_1$ natural numbers such that with $K=8dn\lfloor\log_{s_1}\frac{N}{u_1}\rfloor+1$ and $M=2(n+1)\lfloor\log_{u_1}N\rfloor+1$ we have that $B<u_1<\dotsb<u_M$ are primes, $t_1<\dotsb<t_L<s_1<\dotsb<s_K$, and $u_1,\dotsc,u_M,s_1,\dotsc,s_K,t_1,\dotsc,t_L$ are pairwise relatively prime with $\prod_{l=1}^Lt_l\geq\frac{N}{s_1u_1}$. Set 
 \begin{align*}
  \delta=&\max_{\substack{\nu=0,\dotsc,u_m-1 \\ u_m\,\text{hashes}\,\text{well}}} \left\{\delta^{(m,\nu)}\right\} \\
  =&\max_{\substack{\nu=0,\dotsc,u_m-1 \\u_m\,\text{hashes}\,\text{well}}} \left\{\frac{1}{2dn}\left\|\cc(N,u_m,\nu)
  -\cc^{\text{opt}}_{2dn}(N,u_m,\nu)\right\|_1
  +\left\|\cc(N,\ZZ,u_m,\nu)-\cc(u_m,\nu)\right\|_1\right\}.
 \end{align*}
 Then each $\omega\in\intvl$ with $|c_\omega|>\eps+4\delta$ is added to the output $R$ of Algorithm \ref{alg:fourierapprox} in line \ref{line:addfreq2}, and its coefficient estimate from lines \ref{line:coeffapprox_re} and \ref{line:coeffapprox_im} satisfies
 \[
  |x_R(\omega)-c_\omega|\leq2\delta.
 \]
\end{lem}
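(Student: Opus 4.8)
The plan is to reduce the statement to the single-column recovery guarantees of Algorithm 3 in \cite{Iw-arxiv} collected in Remark~\ref{rem:sparsesummary}, and then to stitch the columns together with a median argument. First I would invoke Lemma~\ref{lem:umwellhashed} (with $E=n+1$) to fix the set of the more than $M/2$ primes $u_m$ that hash all support sets $S_1,\dots,S_n$ well. For each such good prime and each residue $\nu$, Lemma~\ref{lem:wellhashed} guarantees that at most $d$ frequencies of each $S_j$ lie in the class $\nu \bmod u_m$, so the restriction $\bfrho_{u_m,\nu}^T$ carries at most $dn$ energetic frequencies; this is exactly the sparsity level at which line~\ref{line:apply_sft} calls Algorithm 3. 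Thus for every good prime the per-column hypotheses of Remark~\ref{rem:sparsesummary} are met with sparsity $dn$ and with the localized tail/noise parameter $\delta\mn$.

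Next I would treat identification and coefficient accuracy for a fixed target frequency $\omega$ with $|c_\omega| > \eps + 4\delta$. Since $\delta \ge \delta\mn$ by definition, we have $|c_\omega| > 4\delta\mn$, so Remark~\ref{rem:sparsesummary}, part~\ref{item:sparsesumm_addfreq} (applied in the column $\nu = \omega \bmod u_m$ of each good prime) shows that $\omega$ is reconstructed more than $K/2$ times there and hence is returned in $R\mn$. As there are more than $M/2$ good primes and $\omega$ falls in exactly one residue class per prime, $\omega$ is found in strictly more than $M/2$ of the sets $R\mn$ and therefore qualifies for the median step in line~\ref{line:approx_start}. For each good prime, part~\ref{item:sparsesum_approx} gives $|x_{\omega}\mn - c_\omega| \le \sqrt{2}\,\delta\mn \le \sqrt{2}\,\delta$, so in particular $|\re(x_\omega\mn) - \re(c_\omega)| \le \sqrt{2}\,\delta$ and likewise for the imaginary parts. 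Because the good primes form a majority of $\omega$'s appearances, a standard median argument (as in \cite{GGIMS,GMS}) forces the medians computed in lines~\ref{line:coeffapprox_re}--\ref{line:coeffapprox_im} to satisfy $|\re(x_R(\omega)) - \re(c_\omega)| \le \sqrt{2}\,\delta$ and $|\im(x_R(\omega)) - \im(c_\omega)| \le \sqrt{2}\,\delta$; combining the two coordinates yields $|x_R(\omega) - c_\omega| \le 2\delta$, which is the second claim.

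It then remains to show that $\omega$ survives the final truncation to the $nB$ largest coefficient estimates in line~\ref{line:addfreq2}, and this is the step I expect to be the main obstacle. The key structural fact is that, because $f$ is $P(n,d,B)$-structured sparse and $\|\cc(\eta)\|_\infty \le \eps$, any frequency outside $S = \bigcup_{j=1}^n S_j$ has $|c_\omega| = |c_\omega(\eta)| \le \eps$; hence $\{\omega : |c_\omega| > \eps\} \subseteq S$ and there are at most $Bn$ energetic frequencies in total. Our target satisfies $|x_R(\omega)| \ge |c_\omega| - 2\delta > \eps + 2\delta$. Any competing frequency whose appearances are also dominated by good primes inherits the same $2\delta$-accuracy, so an estimate exceeding $\eps + 2\delta$ forces it into $S$; together with $|S| \le Bn$ this shows the genuinely large estimates occupy at most $Bn$ slots, so $\omega$ is not displaced. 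The delicate point is to rule out spurious frequencies that appear in more than $M/2$ columns but with a majority of appearances coming from non-well-hashing primes, whose medians are uncontrolled by Remark~\ref{rem:sparsesummary}; here one must use the majority structure of the good primes from Lemma~\ref{lem:umwellhashed} to argue that such frequencies cannot accumulate enough large estimates to push $\omega$ past rank $nB$. Verifying this separation carefully -- both in the per-column selection of $2dn$ candidates and in the final selection of $nB$ frequencies -- is where the real work of the proof lies.
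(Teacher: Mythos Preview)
Your overall architecture matches the paper's: invoke Lemma~\ref{lem:umwellhashed} to get more than $M/2$ good primes, apply Remark~\ref{rem:sparsesummary} column by column, and assemble with a median.  But you have inverted where the work sits.

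The step you dispatch with ``and hence is returned in $R\mn$'' is in fact the core of the paper's argument, and as written your ``hence'' is not justified.  Remark~\ref{rem:sparsesummary}\ref{item:sparsesumm_addfreq} only says that $\omega$ is reconstructed more than $K/2$ times in the column $(m,\nu)$ and therefore receives an estimate $x_\omega\mn$; it does \emph{not} say that $\omega$ survives the selection of the top $2dn$ estimates performed in line~\ref{line:apply_sft}.  The paper proves membership by contradiction: if $\omega\in R^{(m,\nu),\mathrm{opt}}_{dn}\setminus R\mn$, then $dn+1$ competitors $\tilde\omega\in R\mn\setminus R^{(m,\nu),\mathrm{opt}}_{dn}$ satisfy $|x_{\tilde\omega}\mn|\ge|x_\omega\mn|$.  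Because $u_m$ hashes well, the column has at most $dn$ energetic frequencies, so each such $\tilde\omega$ has $|c_{\tilde\omega}|\le|c_{\omega_{dn+1}\mn}|\le\eps$ (this is where the hypothesis $\|\cc(\eta)\|_\infty\le\eps$ enters, and your derivation never uses the $\eps$).  Every competitor was also reconstructed more than $K/2$ times, so Remark~\ref{rem:sparsesummary}\ref{item:sparsesum_approx} gives the two-sided bound $|c_{\tilde\omega}|+\sqrt{2}\,\delta\mn\ge|x_{\tilde\omega}\mn|\ge|x_\omega\mn|\ge|c_\omega|-\sqrt{2}\,\delta\mn$, forcing $|c_\omega|\le\eps+2\sqrt{2}\,\delta\mn\le\eps+4\delta$ --- the desired contradiction.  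This is exactly the mechanism that turns the threshold $\eps+4\delta$ (not merely $4\delta$) into $\omega\in R\mn$ for every good prime, and only then does the majority-based median bound you describe go through.

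By contrast, the final truncation you flag as the ``main obstacle'' is handled in the paper in a few lines by replaying the same pattern one level up: from $|x_R(\omega)-c_\omega|\le2\delta$ one gets $|x_\omega|>\eps+2\delta$; if $\omega$ were displaced by $Bn$ competitors $\tilde\omega$, at least one of them is non-energetic ($|c_{\tilde\omega}|\le\eps$), and the analogue of the two-sided bound at the median level gives $|x_{\tilde\omega}|\le|c_{\tilde\omega}|+2\delta\le\eps+2\delta<|x_\omega|$, a contradiction.  The paper does not separately engage your worry about spurious $\tilde\omega$ whose appearances are majority-bad; it simply asserts the $2\delta$ median bound for any $\tilde\omega$ that passes line~\ref{line:approx_start}.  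So the ``delicate point'' you anticipate there is not treated as one in the paper, whereas the per-column inclusion you skipped is.
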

\begin{proof}
 Let $u_m$ be a good hashing prime and assume that $\omega\in\intvl$ is contained in $R^{(m,\nu),\text{opt}}_{dn}\backslash R^{(m,\nu)}$, i.e., that it is one of the $dn$ largest magnitude Fourier coefficient frequencies that are congruent to $\nu$ modulo $u_m$, but not contained in the set of frequencies returned by Algorithm 3 in \cite{Iw-arxiv} applied to $\GGG\cdot(\HHH\cast(\widehat{\A_q})^T)^T_{\bfrho_{m,\nu}^T}$ for sparsity $dn$. 
 
 If $|c_\omega|\leq\eps+4\delta$, then $|c_\omega|$ is small enough that not including it in the reconstruction $R$ does not yield large errors, since $\delta$ is defined as the maximum over the 
 \[
 \delta\mn=\frac{1}{2dn}\left\|\cc(N,u_m,\nu)-\cc^{\text{opt}}_{2dn}(N,u_m,\nu)\right\|_1+\left\|\cc(N,\ZZ,u_m,\nu)-\cc(u_m,\nu)\right\|_1
 \]
 for all good hashing primes $u_m$. 
\begin{diss}
If $u_m$ hashes all support sets $S_1,\dotsc,S_n$ well, the restriction of $\cc(N)$ to the frequencies congruent to $\nu$ modulo $u_m$ and the optimal $dn$-term representation of the same vector cannot deviate much, since there are at most $dn$ energetic frequencies with any given residue, so 
 \[
 \frac{1}{2dn}\left\|\cc(N,u_m,\nu)-\cc^{\text{opt}}_{2dn}(N,u_m,\nu)\right\|_1
 \]
 is small. Further, the deviation between the embedding of the restriction of the embedding of $\cc(N)$ into $\ZZ$ to the frequencies congruent to $\nu$ modulo $u_m$ and the restriction of $\cc$ to these frequencies is really small, as we suppose that no significantly large frequencies of $f$ lie outside of $\intvl$. Thus not including a frequency $\omega$ with $|c_\omega|\leq4\delta$ does not have a large impact on the reconstruction accuracy.
\end{diss}
 
 If $|c_\omega|>\eps+4\cdot\delta\geq 4\cdot\delta^{(m,\nu)}$ for all good hashing primes $u_m$, we know by Remark \ref{rem:sparsesummary} \ref{item:sparsesumm_addfreq} that $\omega$ will be reconstructed more than $\frac{K}{2}$ times by Algorithm 3 in \cite{Iw-arxiv}. Then $\omega$ can only not be contained in $R^{(m,\nu)}$ if there are $dn+1$ frequencies $\tilde\omega$ in $R^{(m,\nu)}\backslash R^{(m,\nu),\text{opt}}_{dn}$ that satisfy $\left|x_{\tilde\omega}^{(m,\nu)}\right|\geq\left|x_{\omega}^{(m,\nu)}\right|$. Recall that $u_m$ hashes all support sets $S_1,\dotsc,S_n$ well, so there are at most $dn$ energetic frequencies congruent to $\nu$ modulo $u_m$. Suppose that all frequencies with this residue are ordered by magnitude of their Fourier coefficient, i.e.,
 \[
  \left|c_{\omega_1\mn}\right|\geq\left|c_{\omega_2\mn}\right|\geq\dotsb\geq\left|c_{\omega_{dn}\mn}\right|
  \geq\underbrace{\left|c_{\omega_{dn+1}\mn}\right|}_{\leq\eps}\geq\dotsb\quad.
 \]
 Then $|c_{\tilde\omega}|\leq\left|c_{\omega_{dn+1}\mn}\right|\leq|c_\omega|$ for all $\tilde\omega$. By Remark \ref{rem:sparsesummary} \ref{item:sparsesum_approx} we have for all $\bar\omega$ that were reconstructed more than $\frac{K}{2}$ times by Algorithm 3 in \cite{Iw-arxiv} for $\nu$ modulo $u_m$ that
 \begin{equation}\label{eq:someapprox}
  \left|x_{\bar\omega}^{(m,\nu)}-c_{\bar\omega}\right|\leq\sqrt{2}\delta^{(m,\nu)},
 \end{equation}
 and further 
\begin{diss}
 \begin{align}
  \left|-x_{\bar\omega}^{(m,\nu)}\right|-\left|c_{\bar\omega}\right|
  \leq\left|c_{\bar\omega}-x_{\bar\omega}^{(m,\nu)}\right|\leq\sqrt{2}\delta\mn
  \quad\Leftrightarrow\quad &\left|x_{\bar\omega}^{(m,\nu)}\right|\leq\left|c_{\bar\omega}\right|+\sqrt{2}\delta\mn, \label{eq:ineq1} \\
  \left|c_{\bar\omega}\right|-\left|-x_{\bar\omega}^{(m,\nu)}\right|
  \leq\left|c_{\bar\omega}-x_{\bar\omega}^{(m,\nu)}\right|\leq\sqrt{2}\delta\mn
  \quad\Leftrightarrow\quad &\left|x_{\bar\omega}^{(m,\nu)}\right|\geq\left|c_{\bar\omega}\right|-\sqrt{2}\delta\mn. \label{eq:ineq2}
 \end{align}
\end{diss}
\begin{paper}
 \begin{align}
  &\left|x_{\bar\omega}^{(m,\nu)}\right|\leq\left|c_{\bar\omega}\right|+\sqrt{2}\delta\mn \label{eq:ineq1} \\
  &\left|x_{\bar\omega}^{(m,\nu)}\right|\geq\left|c_{\bar\omega}\right|-\sqrt{2}\delta\mn. \label{eq:ineq2}
 \end{align}
\end{paper}
 (\ref{eq:someapprox}) - (\ref{eq:ineq2}) hold for the $\omega$ and $\tilde\omega$ from above, and we find for all $\omega\in R^{(m,\nu),\text{opt}}_{dn}\backslash R\mn$ that
 \begin{align*}
  &\left|c_{\omega\mn_{dn+1}}\right|+\sqrt{2}\delta\mn\geq\left|c_{\tilde\omega}\right|+\sqrt{2}\delta\mn
  \geq\left|x_{\tilde\omega}\mn\right| \\ 
  \geq& |x_\omega| \geq\left|c_{\omega}\right|-\sqrt{2}\delta\mn\geq\left|c_{\omega_{dn+1}\mn}\right|-\sqrt{2}\delta\mn.
 \end{align*}
 Then
 \[
  \left|c_{\omega}\right|\leq\left|c_{\omega_{dn+1}\mn}\right|+2\sqrt{2}\delta\mn\leq\eps+2\sqrt{2}\delta\mn,
 \]
 which contradicts $|c_\omega|>\eps+4\delta$, so we know that $\omega\in R\mn$. Since this holds for all more than $\frac{M}{2}$ good hashing primes, $\omega$ will be considered from line \ref{line:approx_start} onward. We now prove the accuracy of the coefficient estimate. From (\ref{eq:someapprox}) it follows that 
\[
 \left|\re\left(x^{(m,\nu)}_\omega\right)-\re(c_\omega)\right|
 \leq \left|x_{\omega}^{(m,\nu)}-c_{\omega}\right|\leq\sqrt{2}\delta^{(m,\nu)} \leq \sqrt{2}\delta,
\]
and analogously for the imaginary parts. As the estimates hold for more than $\frac{M}{2}$ of the hashing primes $u_m$, they also hold for the medians in lines \ref{line:coeffapprox_re} and \ref{line:coeffapprox_im}. These are taken over the at most $M$ coefficient estimates $x_{\tilde\omega}\mn$ for $\omega$, since for each $u_m$ an $\tilde\omega=\omega$ can appear in at most one set $R\mn$. Hence, we obtain 
\[
 \left|\re\left(x_\omega\right)-\re(c_\omega)\right| \leq \sqrt{2}\delta \quad\text{and}\quad
 \left|\im\left(x_\omega\right)-\im(c_\omega)\right| \leq \sqrt{2}\delta,
\]
and finally
\[
  \left|x_\omega-c_\omega\right|=\sqrt{\left(\re\left(x_\omega-c_\omega\right)\right)^2
 +\left(\im\left(x_\omega-c_\omega\right)\right)^2} \notag 
 \leq\sqrt{\left(\sqrt{2}\delta\right)^2+\left(\sqrt{2}\delta\right)^2}=2\delta.
\]
All that remains to be shown is that $\omega$ will actually be added to $R$ in line \ref{line:addfreq2}. Similarly to (\ref{eq:ineq2}) we find that $|x_\omega|\geq|c_\omega|-2\delta$. Together with $|c_\omega|>\eps+4\delta$ this implies that $|x_\omega|>\eps+2\delta$. Then $\omega$ is only not included in the output if $x_\omega$ is not among the $Bn$ largest magnitude coefficient estimates, i.e., if there exist $Bn$ other frequencies $\tilde\omega$ that satisfy $|x_{\tilde\omega}|\geq|x_\omega|$. We know that $\omega$ is energetic, which means that at least one of these $\tilde\omega$ must have a Fourier coefficient with $|c_{\tilde\omega}|\leq\eps$. Then an analogue to (\ref{eq:ineq1}) yields
 \[
  |x_\omega|\leq|x_{\tilde\omega}|\leq|c_{\tilde\omega}|+2\delta\leq\eps+2\delta,
 \]
 which contradicts $|x_\omega|>\eps+2\delta$. Hence, $\omega$
\begin{diss} 
  is really among the $nB$ frequencies with the largest magnitude coefficient estimates and 
\end{diss}
 will be added to $R$ in line \ref{line:addfreq2}.
\end{proof}
\begin{diss}
This lemma tells us that if $\omega\in R^{(m,\nu),\text{opt}}_{dn}\backslash R\mn$, then $|c_\omega|\leq\eps+4\delta$. If $|c_\omega|>\eps+4\delta$, it will always be contained in $R$, so if $\omega$ is energetic enough, it will be reconstructed and its Fourier coefficient will be estimated up to a $2\delta$-accuracy.
\end{diss}
\begin{rem}\label{rem:estimates}
 One way to ensure that the requirements of Algorithm \ref{alg:fourierapprox} are met is to define $t_1,\dotsc,t_L$ as the $L$ smallest primes satisfying
 \[
  \prod_{l=1}^{L-1}t_l<\frac{N}{Bdn}\leq\prod_{l=1}^Lt_l.
 \]
 Set $s_1$ as the smallest prime that is greater than both $dn$ and $t_L$,
 \[
  s_1:=p_x>\max\{dn,t_L\}\geq p_{x-1}.
 \]
 Instead of taking the minimal $K$, we can increase it slightly by using that $u_1>B$, i.e.,
 \[
  K=8dn\left\lfloor\log_{s_1}\frac{N}{B}\right\rfloor+1\geq8dn\left\lfloor\log_{s_1}\frac{N}{u_1}\right\rfloor+1.
 \]
 Hence, we can now choose the remaining $s_k$ independently from the $u_m$ to be $s_k:=p_{x-1+k}$ for $k\in\{1,\dotsc,K\}$. The hashing primes $u_m$ can then be found by setting 
 \[
  u_1:=p_y>\max\{B,s_K\}\geq p_{y-1},
 \]
 $M=2(n+1)\lfloor\log_{u_1} N\rfloor+1$ and $u_m:=p_{y-1+m}$ for $m\in\{1,\dotsc,M\}$. 
 With these definitions $t_1,\dotsc,t_L,s_1,\dotsc,s_K,u_1,\dotsc,u_M$ are pairwise relatively prime and satisfy that 
 \[
  s_ku_m\cdot\prod_{l=1}^Lt_l\geq N ,
 \]
 as well as $s_k>dn$ and $u_m>B$ for all $k$ and $m$.
\end{rem}
Using the $s_k$, $t_l$ and $u_m$ from Remark \ref{rem:estimates}, the following main theorem gives us the runtime and error bounds of Algorithm \ref{alg:fourierapprox}.
\begin{thm}\label{thm:main}
 Let $N\in\NN$ and $f\colon[0,2\pi]\rightarrow\CC$ be $P(n,d,B)$-structured sparse with noise $\eta$ such that $\cc(\eta)\in\ell^1$ and $\|\cc(\eta)\|_\infty\leq\eps$. Let $t_1,\dotsc,t_L$ be the smallest primes with $\prod_{l=1}^Lt_l\geq\frac{N}{Bdn}$. Set $s_1$ as the smallest prime greater than $\max\{dn,t_L\}$, $K=8dn\lfloor\log_{s_1}\frac{N}{B}\rfloor+1$ and $s_2,\dotsc,s_K$ as the first $K-1$ primes greater than $s_1$. Let $u_1$ be the smallest prime greater than $\max\{B,s_K\}$, $M=2(n+1)\lfloor\log_{u_1}N\rfloor+1$ and $u_2,\dotsc,u_M$ the first $M-1$ primes greater than $u_1$. Let further $\delta$ be defined as 
 \begin{align*}
  \delta 
  :=&\max_{\substack{\nu=0,\dotsc,u_m-1 \\u_m\,\mathrm{hashes}\,\mathrm{well}}} \left\{\frac{1}{2dn}\|\cc(N,u_m,\nu)
  -\cc\opt_{2dn}(N,u_m,\nu)\|_1
  +\left\|\cc(N,\ZZ,u_m,\nu)-\cc(u_m,\nu)\right\|_1\right\}.
 \end{align*} 
 Then the output $(R,\x_R)$ of Algorithm \ref{alg:fourierapprox} satisfies
 \[
  \|\cc(N)-\x_R\|_2\leq\left\|\cc(N)-\cc\opt_{Bn}(N)\right\|_2+\sqrt{Bn}\cdot(\eps+6\delta).
 \]
 If $B>s_K$, the output can be computed in a runtime of
 \[
  \mathcal{O}\left(\frac{d^2n^3(B+n\log N)\cdot\log^2\frac{N}{Bdn}\log^2\frac{N}{B}\log N\log\left(dn\log \frac{N}{B}\right)
  \log^2\left(\frac{B+n\log N}{\log B}\right)}{\log^2B\log^2(dn)\log\log\frac{N}{Bdn}}\right)
 \]
 and the algorithm has a sampling complexity of
 \[
  \mathcal{O}\left(\frac{d^2n^3(B+n\log N)\cdot\log^2\frac{N}{Bdn}\log^2\frac{N}{B}\log N\log\left(dn\log \frac{N}{B}\right)
  \log\left(\frac{B+n\log N}{\log B}\right)}{\log^2B\log^2(dn)\log\log\frac{N}{Bdn}}\right).
 \]
 \begin{diss}
 If $B\leq s_K$, the algorithm has a runtime of
 \[
  \mathcal{O}\left(\frac{n^4d^3\log^2\frac{N}{Bdn}\log^2\frac{N}{B}\log^2N\log\left(dn\log\frac{N}{B}\right)\log^2(dn\log N)}
  {\log^3(dn)\log B\log\log \frac{N}{Bdn}}\right)
 \]
 with a sampling complexity of
 \[
  \mathcal{O}\left(\frac{n^4d^3\log^2\frac{N}{Bdn}\log^2\frac{N}{B}\log^2N\log\left(dn\log\frac{N}{B}\right)\log(dn\log N)}
  {\log^3(dn)\log B\log\log \frac{N}{Bdn}}\right).
 \]
\end{diss}
\end{thm}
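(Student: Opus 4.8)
The plan is to combine the per-frequency recovery guarantee of Lemma~\ref{lem:approx} with a best $Bn$-term approximation argument for the error estimate, and to assemble the runtime and sampling bounds from the cost of the DFTs together with the $\mu=\sum_m u_m$ invocations of Algorithm~3 of \cite{Iw-arxiv}. I would first fix the parameters exactly as in Remark~\ref{rem:estimates}, so that $t_1,\dotsc,t_L,s_1,\dotsc,s_K,u_1,\dotsc,u_M$ are pairwise relatively prime, satisfy $\prod_{l=1}^Lt_l\geq\frac{N}{s_1u_1}$, and obey $s_k>dn$ and $u_m>B$. By Lemma~\ref{lem:umwellhashed} more than $\frac{M}{2}$ of the $u_m$ hash all support sets well, so for each such prime every column $\bfrho_{u_m,\nu}^T$ of $(\HHH\cast(\widehat{\A_q})^T)^T$ is at most $dn$-sparse; this is exactly the regime in which Algorithm~3 behaves as summarized in Remark~\ref{rem:sparsesummary}, and it legitimizes applying it column by column with sparsity $dn$ and taking medians across the $u_m$.

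For the error bound I set $\theta:=\eps+4\delta$ and $L:=\{\omega\in\intvl:|c_\omega|>\theta\}$. Since $f$ is $P(n,d,B)$-structured sparse, any $\omega\notin S=\bigcup_jS_j$ has $|c_\omega|=|c_\omega(\eta)|\leq\eps<\theta$, so $L\subseteq S$ and hence $|L|\leq Bn$; because $|L|\leq Bn$ this also forces $L\subseteq R^{\mathrm{opt}}_{Bn}$. By Lemma~\ref{lem:approx} every $\omega\in L$ lies in the output $R$ with $|x_\omega-c_\omega|\leq2\delta$, and $|R|=Bn$. Splitting by the triangle inequality,
\[
 \|\cc(N)-\x_R\|_2\leq\left\|\left(\cc(N)\right)_{R^c}\right\|_2+\left\|\left(\cc(N)\right)_R-\x_R\right\|_2,
\]
I bound the two pieces separately. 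Since $R^{\mathrm{opt}}_{Bn}\setminus R\subseteq R^{\mathrm{opt}}_{Bn}\setminus L$ contains at most $Bn$ frequencies, each of magnitude at most $\theta$, the set identity for the tails gives $\left\|\left(\cc(N)\right)_{R^c}\right\|_2^2\leq\left\|\cc(N)-\cc\opt_{Bn}(N)\right\|_2^2+Bn\,\theta^2$, hence $\left\|\left(\cc(N)\right)_{R^c}\right\|_2\leq\left\|\cc(N)-\cc\opt_{Bn}(N)\right\|_2+\sqrt{Bn}\,\theta$ by $\sqrt{a^2+b^2}\leq a+b$. The $2\delta$ accuracy on the at most $Bn$ entries of $R$ yields $\left\|\left(\cc(N)\right)_R-\x_R\right\|_2\leq\sqrt{Bn}\,2\delta$. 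Adding the two and using $\theta+2\delta=\eps+6\delta$ produces the claimed estimate
\[
 \|\cc(N)-\x_R\|_2\leq\left\|\cc(N)-\cc\opt_{Bn}(N)\right\|_2+\sqrt{Bn}\,(\eps+6\delta).
\]

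For the complexity I would add the cost of forming $\GGG\cdot(\HHH\cast(\widehat{\A_q})^T)^T$, namely the $KLM$ DFTs of lengths $s_kt_lu_m$ at cost $\mathcal{O}(s_kt_lu_m\log(s_kt_lu_m))$ each, to the cost of the $\mu=\sum_mu_m$ calls to Algorithm~3 run with sparsity $dn$ on a problem of bandwidth $\frac{N}{u_1}$, each of which costs the amount recorded in the final item of Remark~\ref{rem:sparsesummary} with $d\mapsto dn$ and $N\mapsto\frac{N}{u_1}$, plus the $\mathcal{O}(Bn\log(Bn))$ selection of the $Bn$ largest estimates. Substituting the prime magnitudes implied by Remark~\ref{rem:estimates} and the prime number theorem, so that $L\approx\log_{t_L}\frac{N}{Bdn}$, $K\approx dn\log_{s_1}\frac{N}{B}$, $M\approx n\log_{u_1}N$ and $s_K,u_M$ acquire their explicit sizes, and using the hypothesis $B>s_K$ to control the factors $s_kt_lu_m$, then simplifying, gives the stated runtime; the sampling bound is identical except for dropping the single logarithmic factor that the FFTs contribute beyond the number of samples they read.

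The main obstacle is twofold. The delicate conceptual point is to guarantee that the $2\delta$ coefficient accuracy really holds for every frequency retained in $R$, not only for the genuinely energetic ones with $|c_\omega|>\theta$: a retained frequency with $|c_\omega|\leq\theta$ must still be estimated to within $2\delta$, which requires that among the more than $\frac{M}{2}$ primes for which it is output the good hashing primes---where Remark~\ref{rem:sparsesummary}\,\ref{item:sparsesum_approx} forces $|x_\omega\mn-c_\omega|\leq\sqrt2\delta$---form a majority, so that the medians in lines~\ref{line:coeffapprox_re}--\ref{line:coeffapprox_im} remain controlled; this is the part of Lemma~\ref{lem:approx} that must be leaned on most carefully. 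The second, purely technical, obstacle is the runtime bookkeeping: the nested sum $\sum_{k,l,m}s_kt_lu_m\log(s_kt_lu_m)$ and the $\mu$ copies of the Algorithm~3 cost have to be bounded by sharp prime estimates, and carrying every logarithmic factor correctly through the final simplification is where essentially all of the labor lies.
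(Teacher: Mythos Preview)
Your proposal follows essentially the same route as the paper: the error bound decomposes $\|\cc(N)-\x_R\|_2$ over $R^c$ and $R$, invokes Lemma~\ref{lem:approx} to bound each $|c_\omega|$ on $R^{\mathrm{opt}}_{Bn}\setminus R$ by $\eps+4\delta$, and uses the $2\delta$ per-coefficient accuracy on $R$, while the complexity bounds come from showing that the DFT cost $\sum_{k,l,m}s_kt_lu_m\log(s_kt_lu_m)$ dominates every other step. The one point worth sharpening concerns the obstacle you flag: the paper does not lean on Lemma~\ref{lem:approx} for the $2\delta$ bound on \emph{all} of $R$, but argues directly that any $\omega\in R$ lies in some $R\mn$ for each of the more than $M/2$ good hashing primes, so that Remark~\ref{rem:sparsesummary}\,\ref{item:sparsesum_approx} applies there and the medians in lines~\ref{line:coeffapprox_re}--\ref{line:coeffapprox_im} are governed by a majority of $\sqrt2\delta$-accurate values.
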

\begin{proof}
 For the vector $\cc_R(N)$, whose entries are the Fourier coefficients $c_\omega$ for the frequencies contained in $R$ and zero otherwise, it always holds that
 \begin{equation}\label{eq:first_est}
  \|\cc(N)-\x_R\|_2\leq\|\cc(N)-\cc_{R}(N)\|_2+\|\cc_R(N)-\x_R\|_2.
 \end{equation}
  The square of first summand in (\ref{eq:first_est}) can be written as
 \begin{align*}
  &\|\cc(N)-\cc_R(N)\|_2^2=\sum_{\omega=\intu}^{\into}|c_\omega-c_R(\omega)|^2 \\
  =&\sum_{\omega\notin R}|c_\omega|^2+\sum_{\omega\in R}\underbrace{|c_\omega-c_R(\omega)|^2}_{=0}
  =\sum_{\omega\notin R^{\text{opt}}_{Bn}}|c_\omega|^2+\sum_{\omega\in R_{Bn}^{\text{opt}}\backslash R}|c_\omega|^2
  -\sum_{\omega\in R\backslash R_{Bn}^{\text{opt}}}|c_\omega|^2 \\
  =&\left\|\cc(N)-\cc_{Bn}^{\text{opt}}(N)\right\|_2^2+\sum_{\omega\in R_{Bn}^{\text{opt}}\backslash R}|c_\omega|^2
  -\sum_{\omega\in R\backslash R_{Bn}^{\text{opt}}}|c_\omega|^2.
 \end{align*}
 For every $\omega\in R^{\text{opt}}_{Bn}\backslash R$ we know by Lemma \ref{lem:approx} that $|c_\omega|\leq\eps+4\delta$, because otherwise it would be contained in $R$. Since $R^{\text{opt}}_{Bn}\backslash R$ contains at most $Bn$ elements, this yields
 \begin{align*}
  &\|\cc(N)-\cc_R(N)\|_2^2
  =\left\|\cc(N)-\cc_{Bn}^{\text{opt}}(N)\right\|_2^2+\underbrace{\sum_{\omega\in R_{Bn}^{\text{opt}}\backslash R}|c_\omega|^2}_{\leq Bn(\eps+4\delta)^2}
  -\underbrace{\sum_{\omega\in R\backslash R_{Bn}^{\text{opt}}}|c_\omega|^2}_{\geq0} \\
  \leq& \left\|\cc(N)-\cc_{Bn}^{\text{opt}}(N)\right\|_2^2+Bn(\eps+4\delta)^2.
 \end{align*}
For the second summand in (\ref{eq:first_est}) consider an $\omega\in R$. For each of the more than $\frac{M}{2}$ 
good hashing primes it has to be contained in exactly one of the $R\mn$, so $\omega$ must have been reconstructed more than $\frac{K}{2}$ times by Algorithm 3 in \cite{Iw-arxiv}, applied to the entries congruent to $\nu\equiv\omega\mods u_m$. Hence we have by (\ref{eq:someapprox}) that
 \[
  \left|x_\omega\mn-c_\omega\right|\leq\sqrt2\delta\mn \quad\text{and}\quad |x_R(\omega)-c_\omega|\leq2\delta,
 \]
 analogously to the proof of Lemma \ref{lem:approx}. Since $R$ contains at most $Bn$ elements, we find
 \[
  \|\cc_R(N)-\x_R\|_2^2=\sum_{\omega\in R}\underbrace{|c_\omega-x_R(\omega)|^2}_{\leq 4\delta^2} 
  \leq4Bn\delta^2.
 \]
 Combining all these estimates we obtain that
 \begin{align*}
  \|\cc(N)-\x_R\|_2 \leq &\sqrt{\|\cc(N)-\cc_R(N)\|_2^2+\|\cc_R(N)-\x_R\|_2^2} \\
  \leq &\left\|\cc(N)-\cc_{Bn}^{\text{opt}}(N)\right\|_2+\sqrt{Bn}(\eps+4\delta)+2\sqrt{Bn}\delta \\
  =&\left\|\cc(N)-\cc_{Bn}^{\text{opt}}(N)\right\|_2+\sqrt{Bn}(\eps+6\delta).
 \end{align*}
 In order to determine the runtime of the algorithm let us first consider the runtime of the calculation of the DFTs in line \ref{line:dft}. It was shown in \cite{Iw-arxiv, iwenspencer} that
 \[
  t_L=\mathcal{O}\left(\log\frac{N}{Bdn}\right) \quad\text{and} \quad
  s_K=\mathcal{O}\left(dn\log_{dn}\frac{N}{B}\log\left(dn\log\frac{N}{B}\right)\right).
 \]
 Let $\pi$ be the prime-counting function,
 \[
  \pi(x)=\sum_{\substack{2\leq p\leq x \\ p \text{ prime}}}1.
 \]
 If $s_K\leq B$, we can set $u_1:=p_y>B\geq p_{y-1}$ to be the first prime greater than $B$, so by the Prime Number Theorem (see \cite{montgomery})
 \[
  y-1=\pi(B)=\mathcal{O}\left(\frac{B}{\log B}\right)
 \]
 and
 \[
  y-1+M=\mathcal{O}\left(\frac{B}{\log B}+n\log_BN\right)=\mathcal{O}\left(\frac{B+n\log N}{\log B}\right).
 \]
 An equivalent formulation of the Prime Number Theorem yields for $u_M=p_{y-1+M}$ that
 \[
  u_M=\mathcal{O}((y-1+M)\log(y-1+M))=\mathcal{O}\left(\frac{B+n\log N}{\log B}\log\left(\frac{B+n\log N}{\log B}\right)\right).
 \]
 In \cite{iwenspencer} it was proven that 
 \[
  \sum_{\substack{2\leq p\leq R \\ p \text{ prime}}}p=\mathcal{O}\left(\frac{R^2}{\log R}\right) .
 \]
 Because estimating $\sum_{m=1}^Mu_m\log u_m$ by summing $p\log p$ for all primes less than $u_M$ would take into account many primes that do not contribute to the sum, we use instead that
 \[
  \sum_{m=1}^Mu_m\log u_m=\mathcal{O}(M\cdot u_M\log u_M).
 \]
\begin{diss}
 Recall that 
 \[
  \widehat{\B^{m,\nu}_{s_kt_lu_m}}(j)=\widehat{\A_{s_kt_lu_m}}\left((j-\nu)w\mods s_kt_l)\cdot u_m+\nu\right) 
  \quad \forall j\in\{0,\dotsc,s_kt_l-1\}.
 \]
\end{diss}
 In line \ref{line:dft} we have to calculate the DFTs of length $s_kt_lu_m$ of the vectors $\A_{s_kt_lu_m}$ for all $k,l,m$. Even if some $\tilde m\in\NN$ is not a power of 2, computing a DFT of length $\tilde m$ has a runtime of $\mathcal{O}(\tilde m\log \tilde m)$ (see \cite{bluestein,rabiner}). Since $u_1>s_K$, we obtain a computational effort of
\begin{diss}
 \begin{align*}
  &\mathcal{O}\left(\sum_{k=1}^K\sum_{l=0}^L\sum_{m=1}^Ms_kt_lu_m\log(s_kt_lu_m)\right) 
  =\mathcal{O}\left(\sum_{k=1}^Ks_k\sum_{l=0}^Lt_l\sum_{m=1}^Mu_m\log u_m\right) \\
  =&\mathcal{O}\left(\frac{t_L^2}{\log t_L}\cdot \frac{s_K^2}{\log s_K}\cdot M\cdot u_M\log u_M\right) \\  
  =&\mathcal{O}\left(\frac{\log^2\frac{N}{Bdn}}{\log\log\frac{N}{Bdn}}
  \cdot\frac{(dn)^2\log^2\frac{N}{B}\log^2\left(dn\log \frac{N}{B}\right)}
  {\log^2(dn)\log\left(dn\log_{dn}\frac{N}{B}\log\left(dn\log\frac{N}{B}\right)\right)} \cdot n\log_BN\right.\\
  \cdot &\left.\left(\frac{B+n\log N}{\log B}\right)\log\left(\frac{B+n\log N}{\log B}\right)
  \log\left(\left(\frac{B+n\log N}{\log B}\right)\log\left(\frac{B+n\log N}{\log B}\right)\right)\right) \\
  =&\mathcal{O}\left(\frac{d^2n^3(B+n\log N)\cdot\log^2\frac{N}{Bdn}\log^2\frac{N}{B}\log N\log(dn\log \frac{N}{B})
  \log^2\left(\frac{B+n\log N}{\log B}\right)}{\log^2B\log^2(dn)\log\log\frac{N}{Bdn}}\right).  
 \end{align*}
\end{diss}
\begin{paper}
 \begin{align*}
  &\mathcal{O}\left(\sum_{k=1}^K\sum_{l=0}^L\sum_{m=1}^Ms_kt_lu_m\log(s_kt_lu_m)\right) 
  =\mathcal{O}\left(\sum_{k=1}^Ks_k\sum_{l=0}^Lt_l\sum_{m=1}^Mu_m\log u_m\right) \\
  =&\mathcal{O}\left(\frac{t_L^2}{\log t_L}\cdot \frac{s_K^2}{\log s_K}\cdot M\cdot u_M\log u_M\right) \\  
  =&\mathcal{O}\left(\frac{d^2n^3(B+n\log N)\cdot\log^2\frac{N}{Bdn}\log^2\frac{N}{B}\log N\log(dn\log \frac{N}{B})
  \log^2\left(\frac{B+n\log N}{\log B}\right)}{\log^2B\log^2(dn)\log\log\frac{N}{Bdn}}\right).  
 \end{align*}
\end{paper}
 For the sampling complexity we find
\begin{diss}
 \begin{align*}
  &\mathcal{O}\left(\sum_{k=1}^Ks_k\sum_{l=0}^Lt_l\sum_{m=1}^Mu_m\right) 
  =\mathcal{O}\left(\frac{t_L^2}{\log t_L}\cdot \frac{s_K^2}{\log s_K}\cdot M\cdot u_M\right) \\  
  =&\mathcal{O}\left(\frac{(dn)^2\log_{dn}^2\frac{N}{B}\log^2\left(dn\log \frac{N}{B}\right) \cdot \log^2\frac{N}{Bdn}
  \cdot n\log_BN\cdot\left(\frac{B+n\log N}{\log B}\right)\log\left(\frac{B+n\log N}{\log B}\right) }
  {\log\left(dn\log_{dn}\frac{N}{B}\log\left(dn\log\frac{N}{B}\right)\right)\cdot \log\log\frac{N}{Bdn}} \right) \\
  =&\mathcal{O}\left(\frac{d^2n^3(B+n\log N)\cdot\log^2\frac{N}{Bdn}\log^2\frac{N}{B}\log N\log(dn\log \frac{N}{B})
  \log\left(\frac{B+n\log N}{\log B}\right)}{\log^2B\log^2(dn)\log\log\frac{N}{Bdn}}\right).  
 \end{align*}
\end{diss}
\begin{paper}
 \begin{align*}
  &\mathcal{O}\left(\sum_{k=1}^Ks_k\sum_{l=0}^Lt_l\sum_{m=1}^Mu_m\right) 
  =\mathcal{O}\left(\frac{t_L^2}{\log t_L}\cdot \frac{s_K^2}{\log s_K}\cdot M\cdot u_M\right) \\  
  =&\mathcal{O}\left(\frac{d^2n^3(B+n\log N)\cdot\log^2\frac{N}{Bdn}\log^2\frac{N}{B}\log N\log(dn\log \frac{N}{B})
  \log\left(\frac{B+n\log N}{\log B}\right)}{\log^2B\log^2(dn)\log\log\frac{N}{Bdn}}\right).  
 \end{align*}
\end{paper}
 \begin{diss}
 If $\pi(B)<x-1+K$, then $s_K\geq B$, so we can set the $u_m$ as the first $M$ primes greater than $s_K$, i.e.\ $y=x+K$. Then 
 \begin{align*}
  &x-1+K+M \\
  =&8dn\left\lfloor\log_{s_1}\frac{N}{B}\right\rfloor+1+2(n+1)\left\lfloor\log_{u_1}N\right\rfloor+1
  +\mathcal{O}\left(\frac{dn}{\log dn}+\frac{\log\frac{N}{Bdn}}{\log\log\frac{N}{Bdn}}\right) \\
  =&\mathcal{O}\left(dn\log_{dn}N+\frac{dn}{\log dn}+\frac{\log\frac{N}{Bdn}}{\log\log\frac{N}{Bdn}}\right)
 \end{align*}
 and thus
 \begin{align*}
  u_M=\mathcal{O}\left((x-1+K+M)\log(x-1+K+M)\right)=\mathcal{O}\left(dn\log_{dn}N\log(dn\log N)\right).
 \end{align*}
 Combining all these estimates yields that the runtime of the computation of the DFTs in line \ref{line:dft} is
 \begin{align*}
  &\mathcal{O}\left(\sum_{k=1}^K\sum_{l=0}^L\sum_{m=1}^Ms_kt_lu_m\log(s_kt_lu_m)\right) \\
  =&\mathcal{O}\left(\frac{\log^2\frac{N}{Bdn}}{\log\log\frac{N}{Bdn}}
  \cdot \frac{(dn)^2\log^2\frac{N}{B}\log^2\left(dn\log\frac{N}{B}\right)}
  {\log^2(dn)\log\left(dn\log_{dn}\frac{N}{B}\log\left(dn\log \frac{N}{B}\right)\right)}\right. \\
  &\left.\cdot n\log_BN dn\log_{dn}N\log(dn\log N)\log\left(dn\log_{dn}N\log\left(dn\log N\right)\right)\right) \\
  =&\mathcal{O}\left(\frac{n^4d^3\log^2\frac{N}{Bdn}\log^2\frac{N}{B}\log^2N\log\left(dn\log\frac{N}{B}\right)\log^2(dn\log N)}
  {\log^3(dn)\log B\log\log \frac{N}{Bdn}}\right)
 \end{align*}
 and that the sampling complexity is 
 \begin{align*}
  &\mathcal{O}\left(\sum_{k=1}^Ks_k\sum_{l=0}^Lt_l\sum_{m=1}^Mu_m\right) 
  =\mathcal{O}\left(\frac{t_L^2}{\log t_L}\cdot \frac{s_K^2}{\log s_K}\cdot M\cdot u_M\right) \\  
  =&\mathcal{O}\left(\frac{\log^2\frac{N}{Bdn}}{\log\log\frac{N}{Bdn}}
  \cdot \frac{(dn)^2\log^2\frac{N}{B}\log^2\left(dn\log\frac{N}{B}\right)
  \cdot n\log_BN\cdot dn\log_{dn}N\log(dn\log N)}
  {\log^2(dn)\log\left(dn\log_{dn}\frac{N}{B}\log\left(dn\log \frac{N}{B}\right)\right)}\right) \\
  =&\mathcal{O}\left(\frac{n^4d^3\log^2\frac{N}{Bdn}\log^2\frac{N}{B}\log^2N\log\left(dn\log\frac{N}{B}\right)\log(dn\log N)}
  {\log^3(dn)\log B\log\log \frac{N}{Bdn}}\right).
 \end{align*}
\end{diss}
Now we can estimate the runtime of the remaining steps of the algorithm. The $q$ in line \ref{line:initialization} does not actually have to be computed, it is just defined there in order to introduce more readable notation.
In line \ref{line:apply_sft} we apply Algorithm 3 in \cite{Iw-arxiv} to the column corresponding to the residue $\nu$ modulo $u_m$ of $\GGG\cdot(\HHH\cast(\widehat{\A_q})^T)^T$, which was already computed in line \ref{line:dft}. We know from Remark \ref{rem:sparsesummary} that the runtime of Algorithm 3 in \cite{Iw-arxiv}
\begin{diss}
without the computation of the DFTs in line \ref{line:dft_sparse} is 
\begin{align*}
 &\mathcal{O}\left(\sum_{k=1}^Ks_k\log s_k+Kdn\sum_{l=0}^Lt_l+Kdn\log(Kdn)+Kdn\log K+dn\log dn\right) \\
 =&\mathcal{O}\left(\sum_{k=1}^K\sum_{l=0}^Ls_kt_l\log s_k+Kdn\log K+Kdn\log dn\right) \\
 =&\mathcal{O}\left(\sum_{k=1}^K\sum_{l=0}^Ls_kt_l\log s_k+(dn)^2\log^2N\right) \\
 =&\mathcal{O}\left(\sum_{k=1}^K\sum_{l=0}^Ls_kt_l\log s_k\right),
\end{align*}
so the runtime of these lines is actually much smaller than the runtime of line \ref{line:dft_sparse}, since the estimates used above are very rough. Consequently, we find that
\end{diss}
\begin{paper}
is dominated by the computation of the DFTs, so
\end{paper}
the runtime of lines \ref{line:freq_start} to \ref{line:freq_end} of Algorithm \ref{alg:fourierapprox} is dominated by the runtime of line \ref{line:dft}.
\begin{diss}
\[
 \mathcal{O}\left(\sum_{m=1}^M\sum_{\nu=0}^{u_m-1}\sum_{k=1}^K\sum_{l=0}^Lt_ls_k\log s_k\right)
 =\mathcal{O}\left(\sum_{k=1}^K\sum_{l=0}^L\sum_{m=1}^Ms_kt_lu_m\log s_k\right).
\]
\end{diss}
In order to find out for which frequencies line \ref{line:approx_start} to \ref{line:approx_end} have to be executed, we can sort the $2dn\sum_{m=1}^Mu_m$ frequencies that are returned by all the calls of Algorithm 3 in \cite{Iw-arxiv} by size and count how often each distinct frequency appears. This can be done in 
\begin{diss}
\begin{align*}
 &\mathcal{O}\left(2dn\left(\sum_{m=1}^Mu_m\right)\cdot\log\left(2dn\sum_{m=1}^Mu_m\right)\right) \\
 =&\mathcal{O}\left(dn\cdot M\cdot u_M\log(dn\cdot M\cdot u_M)\right)
 =\mathcal{O}\left(\frac{s_K^2}{\log s_K}\cdot\frac{t_L^2}{\log t_L}\cdot u_M^2\right)
\end{align*}
\end{diss}
\begin{paper}
\begin{align*}
 &\mathcal{O}\left(2dn\left(\sum_{m=1}^Mu_m\right)\cdot\log\left(2dn\sum_{m=1}^Mu_m\right)\right) 
 =\mathcal{O}\left(dn M u_M\cdot\log(dn M u_M)\right)
\end{align*}
\end{paper}
time, so it is also insignificant compared to the DFT computation. There are at most 
\begin{diss}
\[
 \frac{2}{M}\cdot\sum_{m=1}^M\sum_{\nu=0}^{u_m-1}2dn=\frac{4dn}{M}\cdot\sum_{m=1}^Mu_m
 =\mathcal{O}\left(\frac{4dnMu_M}{M}\right)=\mathcal{O}\left(4dn\cdot u_M\right)
\]
\end{diss}
\begin{paper}
\[
 \frac{2}{M}\cdot\sum_{m=1}^M\sum_{\nu=0}^{u_m-1}2dn=\frac{4dn}{M}\cdot\sum_{m=1}^Mu_m
 =\mathcal{O}\left(4dn\cdot u_M\right)
\]
\end{paper}
frequencies that can have been found more than $\frac{M}{2}$ times. If we fix one of these frequencies, $\omega$, then for each $u_m$ there is exactly one residue $\nu^{(m)}\in\{0,\dotsc,u_m-1\}$ with $\omega\equiv\nu^{(m)}\mods u_m$. Since the $2dn$ frequencies recovered for any fixed residue modulo some hashing prime are distinct, there can be at most $M$ frequencies $\tilde\omega$ satisfying $\tilde\omega=\omega$ found for all hashing primes. This means that the medians in lines \ref{line:coeffapprox_re} and \ref{line:coeffapprox_im} are taken over at most $M$ elements. As medians can be computed by sorting, both lines have a runtime of $\mathcal{O}(M\log M)$. Combining these considerations we obtain that lines \ref{line:approx_start} to \ref{line:approx_end} require 
\[
 \mathcal{O}\left(4dn\cdot u_M\cdot M\log M\right)
\]
arithmetical operations, which is dominated by the effort of the DFT computations in line \ref{line:dft}. Finally, sorting the $\mathcal{O}(4dn\cdot u_M)$ coefficient estimates in line \ref{line:sort_est} has a runtime of
\[
 \mathcal{O}(4dnu_M\log(4dnu_M)),
\]
so, as stated above, the runtime of Algorithm \ref{alg:fourierapprox} is determined by the one of line \ref{line:dft}.
\end{proof}

If $f+\eta$ is bandlimited, simplifying the above error bound yields Theorem \ref{thm:main_intro_blockSparse} in \S\ref{ch:intro_results}.
\begin{cor}\label{cor:bandlimited_general}
 Let $N\in\NN$ and $f\colon[0,2\pi]\rightarrow\CC$ be $P(n,d,B)$-structured sparse with noise $\eta$ such that $\cc(\eta)\in\ell^1$, $\|\cc(\eta)\|_\infty\leq\eps$ and $f$ and $f+\eta$ are bandlimited to $\intvl$. Choosing the $s_k,t_l,u_m$ as in Theorem \ref{thm:main}, the output $(R,\x_R)$ of Algorithm \ref{alg:fourierapprox} satisfies
 \[
  \|\cc(N)-\x_R\|_2\leq\left\|\cc(N)-\cc\opt_{Bn}(N)\right\|_2+\sqrt{Bn}\left(\eps+\frac{3}{dn}\left\|\cc(N)-\cc\opt_{2Bn}(N)\right\|_1\right).
 \] 
\end{cor}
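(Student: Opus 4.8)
The plan is to obtain the estimate directly from Theorem~\ref{thm:main} by simplifying the quantity $\delta$ under the extra bandlimiting hypothesis. First I would observe that when both $f$ and $f+\eta$ are bandlimited to $\intvl$, the embedding $\cc(N,\ZZ)$ coincides with the full coefficient sequence $\cc(f+\eta)$, so that for every modulus $u_m$ and residue $\nu$ the term $\left\|\cc(N,\ZZ,u_m,\nu)-\cc(u_m,\nu)\right\|_1$ vanishes. Consequently $\delta$ collapses to
\[
 \delta=\frac{1}{2dn}\max_{\substack{\nu=0,\dotsc,u_m-1\\ u_m\ \mathrm{hashes}\ \mathrm{well}}}\left\|\cc(N,u_m,\nu)-\cc\opt_{2dn}(N,u_m,\nu)\right\|_1,
\]
and, comparing the right-hand side of Theorem~\ref{thm:main} with the asserted bound, it remains only to show that $6\delta\leq\frac{3}{dn}\left\|\cc(N)-\cc\opt_{2Bn}(N)\right\|_1$; equivalently, that each per-residue $2dn$-term tail is dominated by the global $2Bn$-term tail $\left\|\cc(N)-\cc\opt_{2Bn}(N)\right\|_1$.

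Second, I would exploit the structural consequences of good hashing. By Lemma~\ref{lem:wellhashed}, whenever $u_m$ hashes all $S_j$ well each residue class modulo $u_m$ contains at most $dn$ frequencies of $S=\bigcup_{j}S_j$. Since every energetic frequency lies in $S$ (any $\omega\notin S$ has $|c_\omega|=|c_\omega(\eta)|\leq\eps$) and every energetic coefficient strictly exceeds in magnitude every non-energetic one, the class-$\nu$ optimal $2dn$-term approximant $\cc\opt_{2dn}(N,u_m,\nu)$ retains all energetic frequencies in that class, while the global optimal $2Bn$-term approximant $\cc\opt_{2Bn}(N)$ retains all of the at most $Bn$ energetic frequencies. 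The plan is then to compare the class-$\nu$ tail against the restriction of $\cc(N)-\cc\opt_{2Bn}(N)$ to that class: by optimality of the best $2dn$-term approximant within the class, $\left\|\cc(N,u_m,\nu)-\cc\opt_{2dn}(N,u_m,\nu)\right\|_1\leq\sum_{\omega\equiv\nu,\ \omega\notin R}|c_\omega|$ for any competitor set $R$ of cardinality at most $2dn$, in particular for $R=R_{2Bn}^{\mathrm{opt}}\cap\{\omega:\omega\equiv\nu\ (\mathrm{mod}\ u_m)\}$. Because the resulting right-hand side is, for each fixed $\nu$, already at most the full tail $\left\|\cc(N)-\cc\opt_{2Bn}(N)\right\|_1$, this simultaneously bounds the maximum over $\nu$.

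The main obstacle is precisely the counting step that licenses using $R_{2Bn}^{\mathrm{opt}}$ as a per-class competitor, namely controlling $\bigl|R_{2Bn}^{\mathrm{opt}}\cap\{\omega\equiv\nu\}\bigr|\leq 2dn$. Good hashing forces at most $dn$ of these intersected frequencies to be energetic, but the remaining non-energetic members of $R_{2Bn}^{\mathrm{opt}}$ (the filler frequencies padding $\cc\opt_{2Bn}(N)$ beyond the energetic support) could in principle cluster in one residue class, so this is the step I would scrutinize most carefully; it is where the structured-sparsity hypothesis is genuinely used, and where any additive slack of order $\eps$ would enter. Once the inequality $\max_{\nu,\ u_m\ \mathrm{well}}\left\|\cc(N,u_m,\nu)-\cc\opt_{2dn}(N,u_m,\nu)\right\|_1\leq\left\|\cc(N)-\cc\opt_{2Bn}(N)\right\|_1$ is established, substituting $6\delta\leq\frac{3}{dn}\left\|\cc(N)-\cc\opt_{2Bn}(N)\right\|_1$ into the $\ell^2$ bound of Theorem~\ref{thm:main} yields the claimed estimate at once.
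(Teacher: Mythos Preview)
Your overall strategy—kill the bandlimiting term in $\delta$ and then show $6\delta\le\frac{3}{dn}\bigl\|\cc(N)-\cc\opt_{2Bn}(N)\bigr\|_1$ so that Theorem~\ref{thm:main} finishes the job—is exactly the paper's. The divergence is in how you control $\delta$. You attack the single worst residue class head-on by proposing $R=R_{2Bn}^{\mathrm{opt}}\cap\{\omega\equiv\nu\pmod{u_m}\}$ as a competitor for the in-class best $2dn$-term approximation, and you are right to worry: that set can have more than $2dn$ elements, because the non-energetic ``filler'' frequencies padding out $R_{2Bn}^{\mathrm{opt}}$ are under no obligation to spread evenly across residue classes. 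Your route therefore stalls at precisely the point you flag, and the structured-sparsity hypothesis alone (which only controls the \emph{energetic} part of each class) does not rescue it.

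The paper sidesteps this per-class counting problem by a different, much simpler device. Since every $\delta^{(m',\nu)}\ge 0$, the maximum at the worst pair $(m',\nu')$ is trivially bounded by the sum over \emph{all} residues modulo the same good prime $u_{m'}$:
\[
\delta=\delta^{(m',\nu')}\le\sum_{\nu=0}^{u_{m'}-1}\delta^{(m',\nu)}
=\frac{1}{2dn}\sum_{\nu=0}^{u_{m'}-1}\left\|\cc(N,u_{m'},\nu)-\cc\opt_{2dn}(N,u_{m'},\nu)\right\|_1.
\]
Because the residue classes partition $\intvl$, summing the per-class tails produces a single global $\ell^1$ quantity, and one never needs to know how $R_{2Bn}^{\mathrm{opt}}$ intersects any individual class. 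The paper then passes from this sum to $\frac{1}{2dn}\bigl\|\cc(N)-\cc\opt_{2Bn}(N)\bigr\|_1$ using only that each class carries at most $dn$ frequencies of $S$ (so the per-class $2dn$-term approximant already captures all of $S$ in that class) together with $|S|\le Bn$. The ``max $\le$ sum'' step is the idea you are missing; it converts your delicate per-class competitor argument into a global one and is the route you should follow.
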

\begin{proof}
 By definition of $\delta$ we have that 
 \begin{align*}
  \delta:=&\max_{\substack{\nu=0,\dotsc,u_m-1 \\ u_m \text{ hashes well}}} \left\{\delta^{(m,\nu)}\right\} =\delta^{(m',\nu')} \\
  =& \frac{1}{2dn}\left\|\cc(N,u_{m'},\nu')-\cc^{\text{opt}}_{2dn}(N,u_{m'},\nu')\right\|_1
  +\left\|\cc(N,\ZZ,u_{m'},\nu')-\cc(u_{m'},\nu')\right\|_1
 \end{align*} 
 for some residue $\nu'$ modulo a good hashing prime $u_{m'}$. Since $f$ and $f+\eta$ are bandlimited, the second summand is 0. Due to the fact that $f$ is $P(n,d,B)$-structured sparse, any restriction $\cc(N,u_{m'},\nu)$ of $\cc(f)$ to the frequencies that are congruent to $\nu$ modulo $u_{m'}$ is $dn$-sparse. As there are at most $Bn$ energetic frequencies, we find the following estimate,
 \begin{align*}
  \delta&\leq\sum_{\nu=0}^{u_{m'}-1}\delta^{(m',\nu)} 
  \leq \sum_{\nu=0}^{u_{m'}-1}\frac{1}{2dn}\left\|\cc(N,u_{m'},\nu)-\cc^{\text{opt}}_{2dn}(N,u_{m'},\nu)\right\|_1 \\
  &\leq \frac{1}{2dn}\left\|\cc(N)-\cc^{\text{opt}}_{2Bn}(N)\right\|_1,
 \end{align*}
 so the error bound from Theorem \ref{thm:main} reduces to
 \begin{align*}
  \|\cc(N)-\x_R\|_2\leq &\left\|\cc(N)-\cc\opt_{Bn}(N)\right\|_2+\sqrt{Bn}\cdot(\eps+6\delta) \\
  \leq &\left\|\cc(N)-\cc\opt_{Bn}(N)\right\|_2+\sqrt{Bn}\cdot\left(\eps+\frac{3}{dn}\left\|\cc(N)-\cc\opt_{2Bn}(N)\right\|_1\right).
 \end{align*}

\end{proof}

\section{Algorithm for Functions with Simplified Fourier Structure}
\label{sec:Alg_for_Special_Props}

The algorithm introduced in \S\ref{ch:poly_sparse} always uses $M$ hashing primes of which more than $M/2$ are good. If we can guarantee that one hashing prime suffices, a simplified, faster version of Algorithm \ref{alg:fourierapprox} can be applied, which is what we will study in the following. 
\subsection{Structured Sparse Functions Requiring Only One Hashing Prime}
If certain additional information about the polynomials generating the support sets $S_1,\dotsc,S_n$ is known, the number of required hashing primes can be reduced to one. We know by Lemma \ref{lem:wellhashed_iff} that a prime $u$ does not hash a support set $S_j$ well if and only if $u$ divides all non-constant coefficients. Thus we can make the following observation.
\begin{thm}\label{thm:side}
 Let $N\in\NN$ and $f\colon[0,2\pi]\rightarrow\CC$ be $P(n,d,B)$-structured sparse with noise $\eta$ such that $\cc(\eta)\in\ell^1$ and $\|\cc(\eta)\|_\infty\leq\eps$. Let the support set $S=\bigcup_{j=1}^nS_j$ of $f$ be defined by the non-constant polynomials $P_j(x)=\sum_{k=0}^da_{jk}x^k$ for $j\in\{1,\dotsc,n\}$. Let $u>B$ be a prime such that for all $j\in\{1,\dotsc,n\}$ there exists a $k_j\in\{1,\dotsc,d\}$ with $p\nmid a_{jk_j}$. Then $u$ hashes all support sets well. Set $M=1$ and the $s_k$ and $t_l$ as in Theorem \ref{thm:main}. If $B>s_K$, the runtime of Algorithm \ref{alg:fourierapprox} reduces to 
 \[
  \mathcal{O}\left(\frac{u\log u\cdot(dn)^2\log^2\frac{N}{Bdn}\log^2\frac{N}{B}\log\left(dn\log\frac{N}{B}\right)}
  {\log^2(dn)\log\log\frac{N}{Bdn}}\right),
 \]
 while only 
 \[
  \mathcal{O}\left(\frac{u\cdot(dn)^2\log^2\frac{N}{Bdn}\log^2\frac{N}{B}\log\left(dn\log\frac{N}{B}\right)}
  {\log^2(dn)\log\log\frac{N}{Bdn}}\right)
 \]
 samples of $f+\eta$ are being used. If $B\leq s_K$, we obtain a runtime of  
 \[
  \mathcal{O}\left(\frac{u\cdot(dn)^2\cdot\log^2\frac{N}{Bdn}\log^2\frac{N}{B}\log^2\left(dn\log\frac{N}{B}\right)}{\log^2(dn)\log\log\frac{N}{Bdn}}\right)
 \]
 and a sampling complexity of
 \[
  \mathcal{O}\left(\frac{u\cdot (dn)^2\cdot \log^2\frac{N}{Bdn}\log^2\frac{N}{B}\log\left(dn\log\frac{N}{B}\right)}{\log^2(dn)\log\log\frac{N}{Bdn}}\right). 
 \]
\end{thm}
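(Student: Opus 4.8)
The plan is to split the claim into its qualitative part — that the single prime $u$ hashes all support sets well, so that one may set $M=1$ in Algorithm~\ref{alg:fourierapprox} — and its quantitative part, the four complexity bounds. I would treat the former as an immediate consequence of the machinery already assembled in \S\ref{sec:Preliminaries}, and the latter as a direct specialization of the bookkeeping in the proof of Theorem~\ref{thm:main}.

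First I would dispose of the well-hashing claim. For each $j\in\{1,\dotsc,n\}$ the hypothesis supplies an index $k_j\in\{1,\dotsc,d\}$ with $u\nmid a_{jk_j}$, i.e.\ a \emph{non-constant} coefficient of $P_j$ that is not annihilated modulo $u$. This is exactly the ``if'' direction of Lemma~\ref{lem:wellhashed_iff}, so $u$ hashes $S_j$ well; as $j$ was arbitrary, $u$ hashes all of $S_1,\dotsc,S_n$ well. By Lemma~\ref{lem:wellhashed}~(ii) each residue class modulo $u$ then meets each $S_j$ in at most $d$ frequencies, so every column $\bfrho_{u,\nu}^T$ of $(\HHH\cast(\widehat{\A_q})^T)^T$ is at most $dn$-sparse and Algorithm~3 in \cite{Iw-arxiv}, run with sparsity $dn$ on each such column, recovers the energetic frequencies and their coefficient estimates correctly for every $\nu$. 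Because there is now a single, guaranteed-good prime, the majority/median argument over $m=1,\dotsc,M$ that was needed in Lemma~\ref{lem:approx} collapses: one may take $M=1$, and the recovery guarantee is inherited from Theorem~\ref{thm:main} specialized to $M=1$.

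The substance is therefore the complexity count, which I would obtain by rerunning the estimate from the proof of Theorem~\ref{thm:main} with $M=1$ and the single prime $u=u_1$. The runtime is dominated, exactly as before, by the DFT computation in line~\ref{line:dft}; with $M=1$ this is $KL$ transforms of lengths $s_kt_lu$, costing
\[
 \mathcal{O}\left(\sum_{k=1}^K\sum_{l=0}^L s_kt_lu\log(s_kt_lu)\right)
 =\mathcal{O}\left(u\cdot\frac{s_K^2}{\log s_K}\cdot\frac{t_L^2}{\log t_L}\cdot\log(s_ku)\right),
\]
using $\sum_k s_k=\mathcal{O}(s_K^2/\log s_K)$ and $\sum_l t_l=\mathcal{O}(t_L^2/\log t_L)$. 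Substituting the bounds $t_L=\mathcal{O}(\log\frac{N}{Bdn})$ and $s_K=\mathcal{O}(dn\log_{dn}\frac{N}{B}\log(dn\log\frac{N}{B}))$ from Theorem~\ref{thm:main}, and simplifying $s_K^2/\log s_K$ to $\mathcal{O}((dn)^2\log^2\frac{N}{B}\log(dn\log\frac{N}{B})/\log^2(dn))$, reduces the prefactor to the common shape appearing in all four displayed bounds. The sampling complexity is the identical count with the per-transform $\log$ factor deleted.

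The one genuinely delicate point, and the step I expect to be the main obstacle, is resolving the factor $\log(s_kt_lu)$ in the two regimes. When $B>s_K$ the hypothesis $u>B$ forces $u>s_K$, so $u$ dominates all three length factors and $\log(s_kt_lu)=\mathcal{O}(\log u)$; this produces the $u\log u$ runtime and the $\log u$-free sampling bound. When $B\le s_K$ the binding logarithm is instead that of $s_K$, i.e.\ $\log(s_kt_lu)=\mathcal{O}(\log s_K)=\mathcal{O}(\log(dn\log\frac{N}{B}))$ (consistent with $u$ taken comparable to $s_K$ as in the construction of Remark~\ref{rem:estimates}), which promotes the single power of $\log(dn\log\frac{N}{B})$ seen in the sampling count to the squared power appearing in the runtime. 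I would also check the routine housekeeping — that $u$, being prime and fixed, may be assumed distinct from every $s_k$ and $t_l$ so that the pairwise-coprimality requirement of Definition~\ref{dfn:notation2} holds — and confirm that the remaining steps, namely the $M=1$ collapse of the median loops in lines~\ref{line:approx_start}--\ref{line:approx_end} and the final sort in line~\ref{line:sort_est}, stay dominated by line~\ref{line:dft}, exactly as in the proof of Theorem~\ref{thm:main}.
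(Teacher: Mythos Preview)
Your proposal is correct and follows essentially the same route as the paper: invoke Lemma~\ref{lem:wellhashed_iff} to get well-hashing, collapse $M$ to $1$, and then redo the DFT cost estimate from the proof of Theorem~\ref{thm:main} with the single factor $u$ in place of $\sum_m u_m$, splitting on whether $u$ or $s_K$ dominates the logarithm. The only point the paper makes slightly more explicit is the coprimality housekeeping: when $u\le s_K$ the prime $u$ may actually equal one of the $s_k$ or $t_l$, and the paper resolves this by shifting the affected primes up by one, noting that this does not change the asymptotic estimates---your ``may be assumed distinct'' is the same idea but deserves that one extra sentence.
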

\begin{proof}
 Lemma \ref{lem:wellhashed_iff} implies that $u$ hashes all $n$ support sets well, so the restriction to the frequencies congruent to $\nu$ modulo $u$ is at most $dn$-sparse for all residues. Hence, we can apply Algorithm 3 in \cite{Iw-arxiv} to $\GGG\cdot(\HHH\cast(\widehat{\A_q})^T)^T_{\bfrho_{m,\nu}^T}$ for every residue $\nu$ modulo $u$, and will always obtain a good reconstruction. As there are no residues modulo which more than $dn$ energetic frequencies can collide, it suffices to set $u_1=u$ and $M=1$. Lines \ref{line:approx_start} to \ref{line:approx_end} do not have to be executed, since every frequency will be recovered in line \ref{line:apply_sft} for exactly one residue.
  
 If $u>s_K$, we can define the $t_l$ and $s_k$ as in Remark \ref{rem:estimates}. 
\begin{paper}
 If $u\leq s_K$, then $u$ might collide with one of the $s_k$ or $t_l$. In that case we shift all the $t_l$ and $s_k$, starting by $u$, to the next largest prime, so they are at most the next largest prime greater than the original $t_l$ and $s_k$. This does not change the estimates in the proof of Theorem \ref{thm:main}.
\end{paper}
\begin{diss}
 If $u<s_K$, there are three possible cases, for which we introduce the slightly modified primes $\tilde s_k$ and $\tilde t_l$.
 \begin{enumerate}
  \item $t_L<u<s_1$ \label{item:side_i}
  \item $\exists k'\in\{1,\dotsc,K\}\colon u=s_{k'}$ \label{item:side_ii}
  \item $\exists l'\in\{1,\dotsc,L\}\colon u=t_{l'}$ \label{item:side_iii}
 \end{enumerate}
 In case \ref{item:side_i} we set $\tilde s_k:=s_k$ and $\tilde t_l:=t_l$ for all $k\in\{1,\dotsc,K\}$ and $l\in\{1,\dotsc,L\}$. In case 
 \ref{item:side_ii} we choose instead of the $s_k$ primes $\tilde s_k$ defined by
 \[
  \tilde s_k=\begin{cases}
              s_k, \quad &\text{if } k\in\{1,\dotsc,k'-1\}, \\
              s_{k+1}=p_{x+k}, \quad&\text{if } k\in\{k',\dotsc,K\}.
             \end{cases}
 \]
 Then $\tilde s_K=s_{K+1}:=p_{x+K}$, so we have that $\tilde s_K=\mathcal{O}(s_K)$, which means that the estimates from Remark \ref{rem:estimates} still hold. Analogously, we can introduce the primes $\tilde t_l$ and $\tilde s_k$ in case \ref{item:side_iii}, given by
  \[
  \tilde t_l=\begin{cases}
              t_l, \quad &\text{if } l\in\{1,\dotsc,l'-1\}, \\
              t_{l+1}=p_{l+1}, \quad&\text{if } l\in\{l',\dotsc,L\}.
             \end{cases}
 \]
 and
 \[
  \tilde s_1:=p_{\tilde x}>\max\{dn,\tilde t_L\}\geq p_{\tilde x-1}, \quad \tilde s_k=p_{\tilde x-1+k} \quad 
             \forall k\in\{1,\dotsc,K\}.
 \]
 Then $\tilde t_L=t_{L+1}:=p_{L+1}$ and $\tilde s_1$ is the smallest prime greater than both $dn$ and the first prime greater than $t_L$, so we have that $\tilde t_L=\mathcal{O}(t_L)$ and $\tilde s_K=\mathcal{O}(s_K)$. Thus we can use the same estimates as before. 
\end{diss} 
 
 Let us first consider the case that $u>s_K$. We obtain for the computation of the DFTs in line \ref{line:dft}, which dominates the runtime of Algorithm \ref{alg:fourierapprox}, that they require
\begin{diss}
 \begin{align*}
  &\mathcal{O}\left(\sum_{k=1}^K\sum_{l=0}^L s_k t_l u\log(s_k t_l u)\right)
  =\mathcal{O}\left(\sum_{k=1}^K\sum_{l=0}^Ls_kt_Lu(\log s_k+\log t_l+\log u)\right) \\
  =&\mathcal{O}\left(u\log u\cdot\sum_{l=1}^L t_L\sum_{k=1}^K s_k\right) 
  =\mathcal{O}\left(u\log u\cdot\frac{t_L^2}{\log t_L}\cdot\frac{s_K^2}{\log s_K}\right) \\
  =&\mathcal{O}\left(u\log u\cdot\frac{\log^2\frac{N}{Bdn}}{\log\log\frac{N}{Bdn}}
  \cdot\frac{(dn)^2\log^2\frac{N}{B}\log^2\left(dn\log\frac{N}{B}\right)}
  {\log^2(dn)\log\left(dn\log_{dn}\frac{N}{B}\log\left(dn\log\frac{N}{B}\right)\right)}\right) \\
  =&\mathcal{O}\left(\frac{u\log u\cdot(dn)^2\log^2\frac{N}{Bdn}\log^2\frac{N}{B}\log\left(dn\log\frac{N}{B}\right)}
  {\log^2(dn)\log\log\frac{N}{Bdn}}\right)
 \end{align*}
\end{diss}
\begin{paper}
 \begin{align*}
  &\mathcal{O}\left(\sum_{k=1}^K\sum_{l=0}^L s_k t_l u\log(s_k t_l u)\right) 
  =\mathcal{O}\left(u\log u\cdot\frac{t_L^2}{\log t_L}\cdot\frac{s_K^2}{\log s_K}\right) \\
  =&\mathcal{O}\left(\frac{u\log u\cdot(dn)^2\log^2\frac{N}{Bdn}\log^2\frac{N}{B}\log\left(dn\log\frac{N}{B}\right)}
  {\log^2(dn)\log\log\frac{N}{Bdn}}\right)
 \end{align*}
\end{paper}
 arithmetical operations and have a sampling complexity of
 \begin{align*}
  &\mathcal{O}\left(\sum_{k=1}^K\sum_{l=0}^L s_k t_l u\right)  
  =\mathcal{O}\left(u\cdot\frac{t_L^2}{\log t_L}\cdot \frac{s_K^2}{\log s_K}\right) \\
  =&\mathcal{O}\left(\frac{u\cdot(dn)^2\log^2\frac{N}{Bdn}\log^2\frac{N}{B}\log\left(dn\log\frac{N}{B}\right)}
  {\log^2(dn)\log\log\frac{N}{Bdn}}\right).
 \end{align*}
\begin{diss}
 If $u\leq s_K$, we have to introduce the modified primes $\tilde s_k$ and $\tilde t_l$, with which we obtain a runtime of
 \begin{align*}
  &\mathcal{O}\left(\sum_{k=1}^K\sum_{l=0}^L\tilde s_k\tilde t_l u\log(\tilde s_k\tilde t_l u)\right)
  =\mathcal{O}\left(u\cdot\sum_{l=1}^L\tilde t_l\sum_{k=1}^K\tilde s_k\log \tilde s_k\right) \\
  =&\mathcal{O}\left(u\cdot\frac{t_L^2}{\log t_L}\cdot s_K^2\right)
  =\mathcal{O}\left(\frac{u\cdot(dn)^2\cdot\log^2\frac{N}{Bdn}\log^2\frac{N}{B}\log^2\left(dn\log\frac{N}{B}\right)}
  {\log^2(dn)\log\log\frac{N}{Bdn}}\right)
 \end{align*}
 and a sampling complexity of 
 \begin{align*}
  &\mathcal{O}\left(\sum_{k=1}^K\sum_{l=0}^L\tilde s_k\tilde t_l u\right)  
  =\mathcal{O}\left(u\cdot\frac{t_L^2}{\log t_l}\cdot \frac{s_K^2}{\log s_K}\right) \\
  =&\mathcal{O}\left(\frac{u\cdot(dn)^2\cdot\log^2\frac{N}{Bdn}\log^2\frac{N}{B}\log^2\left(dn\log\frac{N}{B}\right)}
  {\log^2(dn)\log\log\frac{N}{Bdn}\log\left(dn\log_{dn}\frac{N}{B}\log\left(dn\log\frac{N}{B}\right)\right)}\right) \\
  =&\mathcal{O}\left(\frac{u\cdot(dn)^2\cdot\log^2\frac{N}{Bdn}\log^2\frac{N}{B}\log\left(dn\log\frac{N}{B}\right)}
  {\log^2(dn)\log\log\frac{N}{Bdn}}\right).
 \end{align*}
\end{diss}
\begin{paper}
 If $u\leq s_K$, we obtain a runtime of
 \begin{align*}
  &\mathcal{O}\left(u\cdot\sum_{l=0}^L t_l\sum_{k=1}^K s_k\log s_k\right) 
  =\mathcal{O}\left(u\cdot\frac{t_L^2}{\log t_L}\cdot s_K^2\right) \\
  =&\mathcal{O}\left(\frac{u\cdot(dn)^2\cdot\log^2\frac{N}{Bdn}\log^2\frac{N}{B}\log^2\left(dn\log\frac{N}{B}\right)}
  {\log^2(dn)\log\log\frac{N}{Bdn}}\right)
 \end{align*}
 and a sampling complexity of 
 \begin{align*}
  &\mathcal{O}\left(\sum_{k=1}^K\sum_{l=0}^L s_k t_l u\right)  
  =\mathcal{O}\left(u\cdot\frac{t_L^2}{\log t_L}\cdot \frac{s_K^2}{\log s_K}\right) \\
  =&\mathcal{O}\left(\frac{u\cdot(dn)^2\cdot\log^2\frac{N}{Bdn}\log^2\frac{N}{B}\log\left(dn\log\frac{N}{B}\right)}
  {\log^2(dn)\log\log\frac{N}{Bdn}}\right).
 \end{align*}
\end{paper}
 \end{proof}
We now give some conditions on the coefficients of the polynomials $P_1,\dotsc,P_n$ generating the support sets $S_1,\dotsc,S_n$ which guarantee that all $S_j$ are hashed well. All of the conditions arise by tightening the necessary and sufficient requirement of the existence of a non-constant coefficient that is not divisible by $u$ in Theorem \ref{thm:side}. Hence, all of the conditions are sufficient, but they may not be necessary anymore, which might make them easier to prove in practice.
\begin{lem}\label{lem:special_cases_B}
 Let $f$ be $P(n,d,B)$-structured sparse. In the following cases any prime $u>B$ is guaranteed to hash all frequency subsets well,
 \begin{enumerate}[series=specialcases]
  \item $\forall\, j\in\{1,\dotsc,n\}\colon \gcd\left(a_{j1},\dotsc,a_{jd}\right)<B$, which includes 
  $\gcd\left(a_{j_1},\dotsc,a_{jd}\right)=1$, \label{item:special_start}
  \item $\forall\, j\in\{1,\dotsc,n\}\, \exists\, k_j\in\{1,\dotsc,d\} \colon \left|a_{jk_j}\right|<B$, \label{item:special_mid}
  \item $\forall\, j\in\{1,\dotsc,n\}\, \exists\, k_j\in\{1,\dotsc,d\} \colon  a_{jk_j}=1$, which includes monic polynomials,
  \item $\forall\, j\in\{1,\dotsc,n\} \colon \deg(P_j)=1$ and $a_{j1}=1$, which is the block sparse case. \label{item:block} \label{item:special_end}
 \end{enumerate}
\end{lem}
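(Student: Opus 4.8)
The plan is to reduce each of the four cases to the necessary-and-sufficient criterion already established in Lemma~\ref{lem:wellhashed_iff}: a prime $u > B$ hashes the support set $S_j$ generated by $P_j(x) = \sum_{k=0}^d a_{jk} x^k$ well exactly when at least one non-constant coefficient $a_{jk}$ (with $k \geq 1$) satisfies $u \nmid a_{jk}$. Hence it suffices to show that, under each hypothesis and for every $j \in \{1, \dots, n\}$, some index $k \in \{1, \dots, d\}$ with $u \nmid a_{jk}$ exists; since the argument makes no use of a particular $u$ beyond $u > B$, this simultaneously covers all primes $u > B$ and all support sets.

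For case (i) I would argue by contradiction, working with $g_j := \gcd(a_{j1}, \dots, a_{jd})$. Because $P_j$ is non-constant, not all of $a_{j1}, \dots, a_{jd}$ vanish, so $g_j$ is a genuine positive integer with $1 \leq g_j < B < u$ by hypothesis. If $u$ divided every $a_{jk}$ for $k \in \{1, \dots, d\}$, then $u$ would be a common divisor of these coefficients and thus $u \mid g_j$; but a prime cannot divide a positive integer strictly smaller than itself. This contradiction produces the required coefficient, and the quoted subcase $g_j = 1$ is the obvious instance.

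The remaining cases follow because, read as hypotheses, they form a descending chain (iv) $\Rightarrow$ (iii) $\Rightarrow$ (ii) $\Rightarrow$ (i), so establishing (i) suffices once the implications are checked. In case (ii) the witnessing coefficient $a_{jk_j}$ must be taken nonzero --- otherwise the condition carries no content, as $P_j(x) = u x^2 + a_{j0}$ with $a_{j1} = 0$ shows --- and then $g_j \mid a_{jk_j}$ yields $g_j \leq |a_{jk_j}| < B$, which is (i); equivalently, $0 < |a_{jk_j}| < u$ already forces $u \nmid a_{jk_j}$ directly. Case (iii) is the instance $a_{jk_j} = 1$ of (ii), with monic polynomials contributing their leading coefficient, and $u \nmid 1$ is immediate since $u > B > 1$. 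Case (iv), the block sparse setting $\deg(P_j) = 1$ with $a_{j1} = 1$, is in turn a sub-case of (iii).

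I expect no real obstacle: once Lemma~\ref{lem:wellhashed_iff} is available the argument is elementary number theory. The only points needing care are confirming that $g_j$ is a positive integer --- which rests on the non-constancy of each $P_j$ built into the definition of $P(n,d,B)$-structured sparsity --- and recognising the implicit nonzero requirement on the small coefficient in case (ii), without which that hypothesis would not actually guarantee well-hashing.
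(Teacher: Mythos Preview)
Your argument is correct and matches the paper's intended route. The paper does not actually write out a proof of this lemma; it merely remarks, in the paragraph preceding the statement, that each condition arises by tightening the necessary-and-sufficient criterion from Theorem~\ref{thm:side} (equivalently, Lemma~\ref{lem:wellhashed_iff}) that some non-constant coefficient not be divisible by $u$. Your reduction to Lemma~\ref{lem:wellhashed_iff} and the chain (iv) $\Rightarrow$ (iii) $\Rightarrow$ (ii) $\Rightarrow$ (i) is precisely such a tightening, so you have filled in exactly what the authors left implicit.

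Your observation about case~(ii) is a genuine point: as literally written, the hypothesis $|a_{jk_j}| < B$ is vacuously satisfied by a zero coefficient, and your counterexample $P_j(x) = u x^2 + a_{j0}$ shows that without the implicit nonzero requirement the conclusion can fail. The paper does not flag this, so your reading --- that the witnessing coefficient must be understood as nonzero --- is the correct interpretation and worth stating.
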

If we have already fixed a prime $u>B$ that is supposed to be the hashing prime, the following conditions imply that $u$ indeed hashes all support sets well.
\begin{lem}\label{lem:special_cases_u}
Let $f$ be $P(n,d,B)$-structured sparse. In the following cases a fixed prime $u>B$ is guaranteed to hash all support sets well.
\begin{enumerate}[resume=specialcases]
 \item \ref{item:special_start} to \ref{item:special_end} from Lemma \ref{lem:special_cases_B} hold for $u$, and $B$ can be changed to $u$ for \ref{item:special_start} and \ref{item:special_mid}
 \item $\forall\, j\in\{1,\dotsc,n\}\colon u\nmid\sum_{k=1}^da_{jk},$
 \item $\forall\, j\in\{1,\dotsc,n\}\,\exists\,\eps_j\in\{0,1\}^d\colon u\nmid \sum_{k=1}^d(-1)^{\eps_{jk}}a_{jk}.$
\end{enumerate}
\end{lem}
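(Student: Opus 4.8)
The plan is to reduce every listed condition to the necessary and sufficient criterion of Lemma~\ref{lem:wellhashed_iff}: a prime $u>B$ hashes a support set $S_j$ well exactly when at least one non-constant coefficient $a_{jk}$ ($k\in\{1,\dotsc,d\}$) satisfies $u\nmid a_{jk}$. Taking the contrapositive, $u$ fails to hash $S_j$ well precisely when $u\mid a_{jk}$ for every $k\in\{1,\dotsc,d\}$. Thus for each condition it suffices to argue that $u$ cannot divide all of $a_{j1},\dotsc,a_{jd}$ simultaneously. I would fix an arbitrary $j\in\{1,\dotsc,n\}$ throughout and, since there are only finitely many conditions, treat them one at a time.

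For the first item I would take each of the four conditions of Lemma~\ref{lem:special_cases_B} in turn, now phrased with $u$ in place of $B$ (which is permissible in conditions (i) and (ii) because $u>B$). If $\gcd(a_{j1},\dotsc,a_{jd})<u$, then $u\mid a_{jk}$ for all $k$ would give $u\mid\gcd(a_{j1},\dotsc,a_{jd})$; as $P_j$ is non-constant this gcd is a positive integer, forcing $\gcd(a_{j1},\dotsc,a_{jd})\geq u$, a contradiction. If some non-constant coefficient is nonzero with $|a_{jk_j}|<u$, then $u\mid a_{jk_j}$ would force $|a_{jk_j}|\geq u$, again impossible. The remaining two conditions supply a coefficient equal to $1$, and $u>B\geq 2$ gives $u\nmid 1$ immediately. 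In every case a witness $a_{jk}$ with $u\nmid a_{jk}$ is produced, so Lemma~\ref{lem:wellhashed_iff} applies.

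The second and third items are the cleanest. If $u\nmid\sum_{k=1}^d a_{jk}$ but $u\mid a_{jk}$ for every $k\in\{1,\dotsc,d\}$, then $\sum_{k=1}^d a_{jk}$ is a sum of multiples of $u$ and hence itself divisible by $u$, a contradiction; so some $a_{jk}$ escapes divisibility by $u$. The signed version is identical: for any choice of signs $\eps_j\in\{0,1\}^d$, the combination $\sum_{k=1}^d(-1)^{\eps_{jk}}a_{jk}$ is again a $\ZZ$-linear combination of $a_{j1},\dotsc,a_{jd}$, so $u\mid a_{jk}$ for all $k$ would force $u$ to divide it, contradicting the hypothesis. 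Since $j$ was arbitrary, each condition yields that $u$ hashes every $S_j$ well.

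I expect no genuine obstacle here; the argument is essentially a direct unpacking of Lemma~\ref{lem:wellhashed_iff}. The only step requiring care is the implicit requirement that the witnessing coefficient in condition (ii) be \emph{nonzero}: a vanishing coefficient is trivially divisible by $u$ and carries no information, so one must read that condition as asserting the existence of a nonzero non-constant coefficient of small magnitude. An analogous remark ensures the gcd in condition (i) is a positive integer. Both facts follow from the standing assumption that the generating polynomials $P_1,\dotsc,P_n$ are non-constant, so that at least one of $a_{j1},\dotsc,a_{jd}$ is nonzero for each $j$.
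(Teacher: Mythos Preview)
Your proposal is correct and matches the paper's approach. The paper does not actually supply a written proof of this lemma; it states both Lemma~\ref{lem:special_cases_B} and Lemma~\ref{lem:special_cases_u} without proof, remarking only that ``all of the conditions arise by tightening the necessary and sufficient requirement of the existence of a non-constant coefficient that is not divisible by $u$'' from Lemma~\ref{lem:wellhashed_iff}. Your argument is precisely this unpacking, and your cautionary remark about reading condition~(ii) as requiring a \emph{nonzero} small-magnitude coefficient is a worthwhile observation that the paper leaves implicit.
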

\subsection{Block Frequency Sparse Functions}
\label{subsec:Alg_for_Special_Props_blsp}
Let us consider block frequency sparse functions (condition \ref{item:block} in Lemma \ref{lem:special_cases_B}) in more detail. In that case, the support sets $S_1,\dotsc,S_n$ are of the form
\[
 S_j=\left\{a_{j0},a_{j0}+1,\dotsc,a_{j0}+B-1\right\}, \quad j\in\{1,\dotsc,n\},
\] 
\begin{diss}
Input signals or functions of this type, often called \emph{multiband signals}, have already been considered in other publications, for example in \cite{eldar2010_theorytopractice,eldar2008_eff_sampl_sparsewideband,eldar2011_xampling_analog_digital_subnyquist,eldar2009_blind_multiband_signal_rec,fengbresler1996}. The special case where $n=1$, meaning that all possibly energetic frequencies occur in a single block, has also been investigated further in \cite{plonka_smallsupp} and \cite{bittens2016}.
\end{diss}
and we can improve the runtime of our algorithm even further. 
\begin{dfn}[$(n,B)$-block Sparsity]
 A $P(n,1,B)$-structured sparse function $f$ is called $(n,B)$\emph{-block sparse} if the support sets $S_1,\dotsc,S_n$ are generated by the monic linear polynomials
 \[
  P_j(x):=x+a_{j}, \quad j\in\{1,\dotsc,n\}.
 \]
\begin{diss}
 The frequency subsets are of the form $S_j=\{\omega_j,\omega_j+1,\dotsc,\omega_j+B-1\}$.
\end{diss}
\end{dfn}
For block sparse functions we can extend the definition of good hashing primes to integers, because we do not require the multiplicative invertibility of all nonzero elements anymore.
\begin{dfn}\label{dfn:wellhashed_int}
 Let $f$ be $(n,B)$-block sparse with support set $S=\bigcup_{j=1}^nS_j$ generated by the polynomials $P_1,\dotsc,P_n$. An integer $u>B$ \emph{hashes} a support set $S_j$ \emph{well} if
 \[
  |\{\omega \mods u:\omega\in S_j\}|=B \quad \forall j\in\{1,\dotsc,n\}.
 \] 
\end{dfn}
\begin{rem}
 For an $(n,B)$-block sparse function $f$ any integer $u>B$ hashes every support set $S_j$ well, since it consists of $B$ consecutive frequencies. Thus for every residue $\nu$ modulo $u$ the restriction of $S$ to the frequencies congruent to $\nu$ is at most  $n$-sparse,
 \[
  |\{\omega\equiv\nu\mods u\colon \omega\in S\}|\leq n \quad \forall \nu\in\{0,\dotsc,u-1\}.
 \]
\end{rem}
If $f$ is $(n,B)$-block sparse, we can choose the hashing integer $u$ to be the smallest power of 2 greater than the block length $B$. Then $u=\mathcal{O}(B)$, which allows us to give better runtime estimates. Additionally, computing DFTs of length $s\cdot t\cdot u$, where $s$ and $t$ are small primes and $u$ is a power of 2, is faster than if $u$ were a prime of the same size.

In Algorithm \ref{alg:blocksparse} we give the pseudocode for Algorithm \ref{alg:fourierapprox} in the special case of $(n,B)$-block sparse functions.
\begin{algorithm}
\caption{Fourier Approximation for $(n,B)$-block Sparse Functions}
\label{alg:blocksparse}
\begin{algorithmic}[1]
\renewcommand{\algorithmicrequire}{\textbf{Input:}}
\renewcommand{\algorithmicensure}{\textbf{Output:}}
\Require Function $f+\eta$, where $f$ is $(n,B)$-block sparse, bandwidth $N$.
\Ensure $R,x_R$, where $R$ contains the $nB$ frequencies $\omega$ with greatest magnitude coefficient estimates $x_R(\omega)$.
\State Set $u=2^\alpha$ with $\alpha=\lfloor\log_2B\rfloor+1$ and find $L$ and the smallest odd primes $t_1,\dotsc,t_L$ such that $\prod_{l=1}^{L-1}t_l<\frac{N}{un}\leq\prod_{l=1}^Lt_l$. Set $t_0=1$.
\State Let $s_1>\max(n,t_L)$ be prime, $K=2n\lfloor\log_{s_1}\frac{N}{u}\rfloor+1$ and $s_1<\dotsb<s_K$ be primes.
\State Initialize $R=\emptyset$, $\x_R=\mathbf{0}_{N}$
\For{$k$ from 1 to $K$} \label{line:dftstart2} \Comment{computation of $\GGG$ and $\EEE$}
\For{$l$ from 0 to $L$}
\State $\boldsymbol A_{s_kt_lu}\gets\left(f\left(\frac{2\pi j}{s_kt_lu}\right)\right)_{j=0}^{s_kt_lu-1}$
\State $\widehat{\boldsymbol A_{s_kt_lu}}\gets\textbf{DFT}[\boldsymbol A_{s_kt_lu}]$
\EndFor
\EndFor \label{line:dftend2}

\Statex \begin{center} \textsc{Identification of the Energetic Frequencies} \end{center}
\For{$\nu$ from 0 to $u-1$} \label{line:countstart}
\For{$k$ from 1 to $K$}
\State $(1,v,w)\gets\texttt{extended\_gcd}(s_k,u)$ \Comment{i.e., $1=v\cdot s_k+w\cdot u$} \label{line:euclidean}
\For{$h$ from 0 to $s_k-1$}\label{line:bthlargeststart}
\State $a_{0}\gets ((h-\nu)w\mods s_k)\cdot u+\nu$ \Comment{residue modulo $s_ku$}
\label{line:bthlargest}
\For{$l$ from 1 to $L$} \label{line:minimumstart}
\State $b_{\text{min}}\gets\underset{b\in\{0,\dotsc,{t}_l-1\}}{\argmin}\left|\widehat{A_{s_ku}}(a_{0})
-\widehat{A_{s_kt_lu}}(a_{0}+b\cdot s_ku)\right|$ \label{line:argmin}
\State $a_{l}\gets(a_{0}+b_{\text{min}}s_ku)\mods t_l$ \label{line:residues}
\EndFor \label{line:minimumend}
\State Reconstruct $\omega$ by $\omega\equiv a_{0}\mods s_ku$, $\omega\equiv a_{1}\mods t_1,\dotsc,\omega\equiv a_{L}\mods t_L$.\label{line:reconstruction}
\EndFor\label{line:bthlargestend}
\EndFor \label{line:identification_end}

\Statex \begin{center} \textsc{Fourier Coefficient Estimation} \end{center}
\For{\textbf{each} $\omega\equiv\nu\mods u$ reconstructed more than $\frac{K}{2}$ times}\label{line:energetic}
\State $\re\left(x_{\omega}\right)
\gets\text{median}\left\{\re\left(\widehat{A_{s_kt_Lu}}(\omega\mods s_kt_Lu)\right):k\in\{1,\dotsc,K\}\right\}$ \label{line:re}
\State $\im\left(x_{\omega}\right)
\gets\text{median}\left\{\im\left(\widehat{A_{s_kt_Lu}}(\omega\mods s_kt_Lu)\right):k\in\{1,\dotsc,K\}\right\}$ \label{line:im}
\EndFor
\State Sort the coefficients by magnitude s.t. $|x_{\omega_1}|\geq|x_{\omega_2}|\geq\dotsb$.
\State $R^{(1,\nu)}=\{\omega_1,\omega_2,\dotsc,\omega_{2n}\}$ 
\EndFor
\State Sort the coefficients in $\bigcup_{\nu=0}^{u-1}R^{(1,\nu)}$ by magnitude s.t. $|x_{\omega_1}|\geq|x_{\omega_2}|\geq\dotsb$.
\State Output: $R=\{\omega_1,\omega_2,\dotsc,\omega_{nB}\}, x_R$ \label{line:R_m_sparse}
\end{algorithmic}
\end{algorithm}

The function \texttt{extended\_gcd} in line \ref{line:euclidean} denotes the extended Euclidean algorithm, which finds the greatest common divisor $g$ of two integers $a$ and $b$, as well as two integers $v$ and $w$ such that Bézout's identity 
\[
 g=\gcd(a,b)=v\cdot a+w\cdot b
\]
is satisfied. By definition of $u$ and $s_k$ we always have $g=1$ in line \ref{line:euclidean}.

\begin{cor}\label{cor:multiblock}
 Let $N\in\NN$ and $f\colon[0,2\pi]\rightarrow\CC$ be $(n,B)$-block sparse with noise $\eta$ such that $\cc(\eta)\in\ell^1$ and $\|\cc(\eta)\|_\infty\leq\eps$. Set $u:=2^\alpha$, where $\alpha:=\left\lfloor\log_2 B\right\rfloor+1$, $M=1$ and the $s_k$ and $t_l$ as in Theorem \ref{thm:main}. If $u> s_K$, the runtime of Algorithm \ref{alg:fourierapprox} is given by 
 \[
  \mathcal{O}\left(\frac{B\log B\cdot n^2\log^2\frac{N}{Bn}\log^2\frac{N}{B}\log\left(n\log\frac{N}{B}\right)}{\log^2n\log\log\frac{N}{Bn}}\right), 
 \]
 and otherwise, if $u<s_K$, by
 \[
  \mathcal{O}\left(\frac{Bn^2\cdot\log^2\frac{N}{Bn}\log^2\frac{N}{B}\log^2\left(n\log\frac{N}{B}\right)}{\log^2n\log\log\frac{N}{Bn}}\right).
 \]
 In both cases the algorithm has a sampling complexity of
 \[
  \mathcal{O}\left(\frac{Bn^2\cdot\log^2\frac{N}{Bn}\log^2\frac{N}{B}\log\left(n\log\frac{N}{B}\right)}{\log^2n\log\log\frac{N}{Bn}}\right).
 \]
\end{cor}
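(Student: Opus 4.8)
The plan is to obtain this result as a direct specialization of Theorem~\ref{thm:side} to the block sparse regime, where the generating polynomials are linear (so $d=1$) and the hashing modulus is the power of two $u=2^\alpha$ with $B<u\leq 2B$. First I would record the elementary consequences of this choice: since $\alpha=\lfloor\log_2 B\rfloor+1$ we have $u=\mathcal{O}(B)$ and $\log u=\Theta(\log B)$, and because $u$ is a power of two while every $s_k$ and $t_l$ is an odd prime, the set $\{u,s_1,\dots,s_K,t_1,\dots,t_L\}$ is automatically pairwise relatively prime. I would then invoke the remark preceding the statement, together with Definition~\ref{dfn:wellhashed_int}: for an $(n,B)$-block sparse function each support set consists of $B$ consecutive integers, so any integer $u>B$ sends them to $B$ distinct residues and hence hashes every $S_j$ well. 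Consequently the restriction of the coefficient sequence to any residue class modulo $u$ is at most $n$-sparse, a single hashing modulus suffices, and we may take $M=1$.

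The one point requiring care is that Theorem~\ref{thm:side} was stated for a \emph{prime} $u$, whereas here $u=2^\alpha$ is composite. I would argue that primality entered Theorem~\ref{thm:side} solely through Lemma~\ref{lem:wellhashed_iff}, that is, through the root-counting argument guaranteeing that a degree-$d$ polynomial collides at most $d$ times modulo a prime. In the block case this argument is unnecessary: the well-hashing property is purely combinatorial, since consecutive integers remain distinct modulo any $u>B$, and so it survives the passage from a prime to the power of two $u$. Everything else in the reconstruction---the CRT recovery of the frequencies from their residues modulo $u$, the $s_k$, and the $t_l$, together with the B\'ezout-based identities computing the sample indices---relies only on the pairwise coprimality of the moduli, which I have already verified. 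The hard part is precisely this verification that dropping primality does not invalidate the analysis; once it is in place, the entire runtime and sampling bookkeeping of Theorem~\ref{thm:side} carries over verbatim with $d$ set to $1$ and $dn$ replaced throughout by $n$.

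It then remains to substitute the parameters into the bounds of Theorem~\ref{thm:side}, whose two regimes are governed precisely by the position of $u$ relative to $s_K$. Setting $d=1$ collapses $\log\frac{N}{Bdn}$ to $\log\frac{N}{Bn}$, the factor $(dn)^2$ to $n^2$, and $\log(dn\log\frac{N}{B})$ to $\log(n\log\frac{N}{B})$. In the regime $u>s_K$, replacing $\log u$ by $\Theta(\log B)$ and $u$ by $\mathcal{O}(B)$ in the corresponding runtime of Theorem~\ref{thm:side} yields the first displayed bound; in the regime $u<s_K$ (where $B<u$ forces $B<s_K$) the same substitutions in the other runtime yield the second, with $\log u=\mathcal{O}(\log s_K)$ being absorbed into the $\log(n\log\frac{N}{B})$ factor to produce its square. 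The sampling complexity of Theorem~\ref{thm:side} has the same form in both regimes, carrying a factor $u$ but no $\log u$, so the identical substitution produces the single claimed sampling bound in either case.
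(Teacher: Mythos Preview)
Your overall strategy matches the paper's: verify that the power-of-two modulus $u$ hashes every block well, argue that primality of $u$ was inessential so that the machinery of Theorem~\ref{thm:side} applies with $M=1$, and then substitute $d=1$ and $u=\mathcal{O}(B)$ into the runtime and sampling bounds. That is exactly what the paper does, and your discussion of why primality can be dropped is correct.

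There is, however, one concrete slip. You assert that ``every $s_k$ and $t_l$ is an odd prime'' and conclude that coprimality with $u=2^\alpha$ is automatic. But the $t_l$ in Theorem~\ref{thm:main} are the \emph{smallest} primes whose product exceeds $N/(Bdn)$, so $t_1=2$, which collides with $u$. The paper handles this explicitly: in the proof it resets $t_1:=3$ and takes the $t_l$ to be the smallest \emph{odd} primes with $\prod t_l\ge N/(un)$, then observes that this shifts $t_L$ by at most one prime so that the estimate $t_L=\mathcal{O}(\log\tfrac{N}{un})$ is unaffected. You need this adjustment (or an equivalent one) before the coprimality claim goes through; once it is in place, the rest of your argument is fine and agrees with the paper's computation.
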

\begin{proof}
Choosing $u$ as a power of 2 implies that we now have to slightly modify the $t_l$ and $s_k$. Similar to the choice of the primes in Remark \ref{rem:estimates}, we take the smallest $L$ odd primes such that their product is greater than or equal to $\frac{N}{un}$,
\[
 \prod_{l=1}^{L-1}t_l<\frac{N}{un}\leq\prod_{l=1}^Lt_l, \quad t_1:=3.
\]
This means that $t_l=p_{l+1}$. Let $s_1$ be the smallest prime that is greater than $n$ and $t_L$,
 \[
  s_1:=p_x>\max\{n,t_L\}\geq p_{x-1}.
 \]
In this setting we can use the minimal $K$,
\[
 K=8n\left\lfloor\log_{s_1}\frac{N}{u}\right\rfloor+1.
\]
The remaining $s_k$ can be set as $s_k:=p_{x-1+k}$ for $k\in\{1,\dotsc,K\}$. Then the set 
$\{t_1,\dotsc,t_L,s_1,\dotsc,s_K,u\}$ is pairwise relatively prime, $u>B$ and 
\[
 \prod_{l=1}^Lt_l\geq\frac{N}{s_1u_1},
\]
so the CRT can be applied. Since we chose $t_1=3$, the prime $t_L$ in this case is at most the smallest prime greater than the $t_L$ from Remark \ref{rem:estimates} for $d=1$, and thus we still have that 
\[
 t_L=\mathcal{O}\left(\log\frac{N}{un}\right)  \quad\text{and}\quad 
 s_K=\mathcal{O}\left(n\log_n\frac{N}{u}\log\left(n\log\frac{N}{u}\right)\right).
\]
Using additionally that $u=\mathcal{O}(B)$, the runtime of Algorithm \ref{alg:fourierapprox} for $u> s_K$ is given by 
\begin{diss}
\begin{align*}
 &\mathcal{O}\left(\sum_{k=1}^K\sum_{l=0}^Ls_kt_lu\log(s_kt_lu)\right)=\mathcal{O}\left(\sum_{k=1}^K\sum_{l=0}^Ls_kt_lu\log u\right) \\
 =&\mathcal{O}\left(u\log u\cdot\frac{s_K^2}{\log s_K}\cdot \frac{t_L^2}{\log t_L}\right) \\
 =&\mathcal{O}\left(B\log B\cdot\frac{n^2\log^2_n\frac{N}{u}\log^2\left(n\log \frac{N}{u}\right)}
 {\log\left(n\log_n\frac{N}{u}\log\left(n\log\frac{N}{u}\right)\right)}\cdot\frac{\log^2\frac{N}{u}}{\log\log\frac{N}{u}}\right) \\
 =&\mathcal{O}\left(\frac{B\log B\cdot n^2\log^2\frac{N}{Bn}\log^2\frac{N}{B}\log\left(n\log\frac{N}{B}\right)}{\log^2n\log\log\frac{N}{Bn}}\right). 
\end{align*}
\end{diss}
\begin{paper}
\begin{align*}
 &\mathcal{O}\left(\sum_{k=1}^K\sum_{l=0}^Ls_kt_lu\log(s_kt_lu)\right)
 =\mathcal{O}\left(u\log u\cdot\frac{s_K^2}{\log s_K}\cdot \frac{t_L^2}{\log t_L}\right) \\
 =&\mathcal{O}\left(\frac{B\log B\cdot n^2\log^2\frac{N}{Bn}\log^2\frac{N}{B}\log\left(n\log\frac{N}{B}\right)}{\log^2n\log\log\frac{N}{Bn}}\right). 
\end{align*}
\end{paper}
If $u<s_K$, we obtain a runtime of
\begin{diss}
\begin{align*}
 &\mathcal{O}\left(\sum_{k=1}^K\sum_{l=0}^Ls_kt_lu\log(s_kt_lu)\right)=\mathcal{O}\left(u\cdot\sum_{k=1}^K\sum_{l=0}^Lt_ls_k\log s_k\right) \\
 =&\mathcal{O}\left(u\cdot s_K^2\cdot \frac{t_L^2}{\log t_L}\right) \\
 =&\mathcal{O}\left(\frac{Bn^2\cdot\log^2\frac{N}{Bn}\log^2\frac{N}{B}\log^2\left(n\log\frac{N}{B}\right)}{\log^2n\log\log\frac{N}{Bn}}\right).
\end{align*}
\end{diss}
\begin{paper}
\begin{align*}
 &\mathcal{O}\left(\sum_{k=1}^K\sum_{l=0}^Ls_kt_lu\log(s_kt_lu)\right)=\mathcal{O}\left(u\cdot s_K^2\cdot \frac{t_L^2}{\log t_L}\right) \\
 =&\mathcal{O}\left(\frac{Bn^2\cdot\log^2\frac{N}{Bn}\log^2\frac{N}{B}\log^2\left(n\log\frac{N}{B}\right)}{\log^2n\log\log\frac{N}{Bn}}\right).
\end{align*}
\end{paper}

In both cases the number of required samples of $f+\eta$ is
\begin{align*}
 &\mathcal{O}\left(\sum_{k=1}^K\sum_{l=0}^Ls_kt_lu\right)=\mathcal{O}\left(u\cdot \frac{s_K^2}{\log s_K}\cdot\frac{t_L^2}{\log t_L}\right) \\
 =&\mathcal{O}\left(\frac{Bn^2\cdot\log^2\frac{N}{Bn}\log^2\frac{N}{B}\log\left(n\log\frac{N}{B}\right)}{\log^2n\log\log\frac{N}{Bn}}\right).
\end{align*}
\end{proof}

If $f+n$ is bandlimited, we can prove the 1-norm error bound in Theorem \ref{thm:main_intro_blockSparse} in \S\ref{ch:intro_results}.
\begin{cor}\label{cor:bandlimited_block}
 Let $N\in\NN$ and $f\colon[0,2\pi]\rightarrow\CC$ be $(n,B)$-block sparse with noise $\eta$ such that $\eta\in\ell^1$, $\|\cc(\eta)\|_\infty\leq\eps$ and $f$ and $f+\eta$ are bandlimited to $\intvl$. Setting $u:=u_1:=2^\alpha$, where $\alpha:=\left\lfloor\log_2 B\right\rfloor+1$, $M=1$ and the $s_k$ and $t_l$ as in Theorem \ref{thm:main}, the output $(R,\x_R)$ of Algorithm \ref{alg:fourierapprox} satisfies
 \[
   \|\cc(N)-\x_R\|_1\leq 4\left\|\cc(N)-\cc\opt_{Bn}(N)\right\|_1+2Bn\eps.
 \] 
\end{cor}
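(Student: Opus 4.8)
The plan is to prove the $\ell^1$ bound by decomposing the global reconstruction error over the $u$ residue classes modulo the hashing integer $u$, rather than by copying the $\ell^2$ argument of Theorem \ref{thm:main}. A direct $\ell^1$ analogue of that proof would only give $\|\cc(N)-\cc\opt_{Bn}(N)\|_1+Bn\eps+6Bn\delta$, and since $\delta=\max_\nu\delta^{(1,\nu)}$ while $d=1$ forces $\delta\le\tfrac{1}{2n}\|\cc(N)-\cc\opt_{2Bn}(N)\|_1$, the term $6Bn\delta$ would carry a spurious factor of $B$ on the tail (the max can be attained on a single class that holds all the tail mass). To obtain a constant in front of the tail I will instead exploit that, because $f$ is $(n,B)$-block sparse and $u>B$, Definition \ref{dfn:wellhashed_int} guarantees every support set is hashed well, so each residue class modulo $u$ contains at most $n$ energetic frequencies and Algorithm \ref{alg:fourierapprox} reduces (with $M=1$) to running Algorithm 3 of \cite{Iw-arxiv} with sparsity $n$ independently on each class.

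First I would record the exact identity
\[
 \|\cc(N)-\x_R\|_1=\sum_{\nu=0}^{u-1}\;\sum_{\omega\equiv\nu\mods u}\left|c_\omega-x_R(\omega)\right|,
\]
valid because the classes partition $\intvl$. Since $f$ and $f+\eta$ are bandlimited, the second summand in $\delta^{(1,\nu)}$ vanishes, so $\delta^{(1,\nu)}=\tfrac{1}{2n}\|\cc(N,u,\nu)-\cc\opt_{2n}(N,u,\nu)\|_1$. Next I would establish a per-class $\ell^1$ guarantee of the form
\[
 \|\cc(N,u,\nu)-\x^{(1,\nu)}\|_1\le 4\left\|\cc(N,u,\nu)-\cc\opt_{n}(N,u,\nu)\right\|_1+n\eps,
\]
by splitting the per-class error into the coefficient error on the recovered set $R^{(1,\nu)}$ and the mass of the missed frequencies. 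Remark \ref{rem:sparsesummary}, item \ref{item:sparsesum_approx}, bounds the estimate error of every recovered frequency by $\sqrt{2}\,\delta^{(1,\nu)}$, and Remark \ref{rem:sparsesummary}, item \ref{item:sparsesumm_addfreq} (as already used in Lemma \ref{lem:approx}), shows that every $\omega$ with $|c_\omega|>\eps+4\delta^{(1,\nu)}$ is recovered; since there are at most $n$ energetic frequencies in the class, both the number of recovered frequencies and the number of missed significant ones are $O(n)$. Summing the per-coefficient bounds therefore produces a contribution $O(n\,\delta^{(1,\nu)})$, and the decisive point is the identity $n\,\delta^{(1,\nu)}=\tfrac12\|\cc(N,u,\nu)-\cc\opt_{2n}(N,u,\nu)\|_1\le\tfrac12\|\cc(N,u,\nu)-\cc\opt_{n}(N,u,\nu)\|_1$: the factor $n$ cancels the $\tfrac{1}{2n}$ in the definition of $\delta^{(1,\nu)}$, so the $\delta$-terms collapse into a constant multiple of the per-class best $n$-term tail instead of scaling with the class size.

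Finally I would sum the per-class estimate over the $u=2^{\lfloor\log_2 B\rfloor+1}\le 2B$ classes. The $\eps$-terms accumulate to $u\cdot n\eps\le 2Bn\eps$, and for the tail I would invoke the block analogue of the reduction in Corollary \ref{cor:bandlimited_general}, namely
\[
 \sum_{\nu=0}^{u-1}\left\|\cc(N,u,\nu)-\cc\opt_{n}(N,u,\nu)\right\|_1\le\left\|\cc(N)-\cc\opt_{Bn}(N)\right\|_1,
\]
which holds because each residue class carries at most $n$ of the globally largest $Bn$ coefficients, so the best $Bn$-term approximation deposits at most $n$ terms in any single class. Combining the two displays yields exactly $\|\cc(N)-\x_R\|_1\le 4\|\cc(N)-\cc\opt_{Bn}(N)\|_1+2Bn\eps$. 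I also have to verify that the final thresholding to the $Bn$ largest estimates in Algorithm \ref{alg:fourierapprox} only discards frequencies whose coefficients are at the noise level, which follows from the same magnitude comparison as in the last part of Lemma \ref{lem:approx} and does not worsen the constants.

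I expect the main obstacle to be the per-class $\ell^1$ guarantee with a small enough leading constant: the results quoted from \cite{Iw-arxiv} in Remark \ref{rem:sparsesummary} are an $\ell^2$ bound together with a per-coefficient bound, so I must reassemble them into an $\ell^1/\ell^1$ statement and track every constant, making sure the cancellation $n\,\delta^{(1,\nu)}=\tfrac12\|\,\cdot\,\|_1$ is applied \emph{before} summing over the $O(B)$ classes rather than after. A secondary difficulty is the noise-dominated regime, in which the $Bn$ largest coefficients may reach the $\eps$-floor and a class could then hold more than $n$ of them; there the per-class-to-global tail inequality must be justified carefully (or its deficit absorbed into the $2Bn\eps$ slack) rather than taken for granted.
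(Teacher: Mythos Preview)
Your approach is essentially the same as the paper's: decompose $\|\cc(N)-\x_R\|_1$ over the $u$ residue classes, bound the contribution of each class using $|R^{(1,\nu)}|=2n$ together with the per-coefficient estimate $\sqrt{2}\,\delta^{(1,\nu)}$ from Remark~\ref{rem:sparsesummary}\,\ref{item:sparsesum_approx} and the bound $|c_\omega|\le\eps+2\sqrt{2}\,\delta^{(1,\nu)}$ for $\omega\in R^{(1,\nu),\text{opt}}_n\setminus R^{(1,\nu)}$ from the proof of Lemma~\ref{lem:approx}, then collapse $n\,\delta^{(1,\nu)}$ into $\tfrac12\|\cc(N,u,\nu)-\cc\opt_{2n}(N,u,\nu)\|_1$ and sum the per-class tails to a single global tail. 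The paper gets the slightly sharper constant $1+2\sqrt{2}$ before rounding up to $4$, and it does not explicitly address the final truncation to $Bn$ estimates that you flag; otherwise the arguments coincide.
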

\begin{proof}
As we do not have to take medians over the estimates obtained for the different hashing primes, we can consider the following inequality,
 \begin{align*}
  &\|\cc(N)-\x_R\|_1=\sum_{\omega=\intu}^{\into}|c_\omega-x_\omega|=\sum_{\nu=0}^{u-1}\sum_{\substack{\omega=\intu \\ \omega\equiv\nu\mods u}}^{\into}|c_\omega-x_\omega| \\
  =&\sum_{\nu=0}^{u-1}\biggl(\sum_{\substack{\omega\in R^{(1,\nu)} \\ \omega\equiv\nu\mods u}} |c_\omega-x_\omega|
  +\sum_{\substack{\omega\notin R^{(1,\nu)}\\ \omega\equiv\nu\mods u}} |c_\omega| \biggr) \\
  =&\sum_{\nu=0}^{u-1}\biggl(\sum_{\omega\in R^{(1,\nu)}}|c_\omega-x_\omega|+\sum_{\omega\notin R^{(1,\nu),\text{opt}}_n}|c_\omega| +\sum_{\omega\in R^{(1,\nu),\text{opt}}_n\backslash R^{(1,\nu)}}|c_\omega| -\sum_{\omega\in R^{(1,\nu)}\backslash R^{(1,\nu),\text{opt}}_n}|c_\omega|\biggr).
 \end{align*}
 By (\ref{eq:someapprox}) the $2n$ elements of $R^{(1,\nu)}$ satisfy $|c_\omega-x_\omega|\leq\sqrt{2}\delta^{(1,\nu)}$. From the proof of Lemma \ref{lem:approx} it follows that $|c_\omega|\leq\eps+2\sqrt{2}\delta^{(1,\nu)}$ for all $\omega\in R^{(1,\nu),\text{opt}}_n\backslash R^{(1,\nu)}$, because otherwise $\omega$ would be added to $R^{(1,\nu)}$. Recall the definition of $\delta^{(1,\nu)}$,
 \[
  \delta^{(1,\nu)}:=\frac{1}{2n}\left\|\cc(N,u,\nu)-\cc_{2n}\opt(N,u,\nu)\right\|_1+\underbrace{\left\|\cc(N,\ZZ,u,\nu)-\cc(u,\nu)\right\|_1}_{=0},
 \]
 since $f+\eta$ is bandlimited. Combining these considerations we find that
 \begin{align*}
  &\|\cc(N)-\x_R\|_1
  \leq\sum_{\nu=0}^{u-1}\left(2\sqrt{2}n\delta^{(1,\nu)}+\left\|\cc(N,u,\nu)-\cc_n\opt(N,u,\nu)\right\|_1+n\left(\eps+2\sqrt{2}\delta^{(1,\nu)}\right)\right) \\
  =&\sum_{\nu=0}^{u-1}\left(\left\|\cc(N,u,\nu)-\cc_n\opt(N,u,\nu)\right\|_1 +4\sqrt{2}n\left(\frac{1}{2n}\left\|\cc(N,u,\nu)-\cc_{2n}\opt(N,u,\nu)\right\|_1\right)\right) \\ 
  +&nu\eps.
 \end{align*}
 Because $f$ is $(n,B)$-block sparse, every restriction $\cc(N,u,\nu)$ of $\cc(f+\eta)$ to the frequencies that are congruent to $\nu$ modulo $u$ is $n$-sparse, so we can use the same idea as in the proof of Corollary \ref{cor:bandlimited_general} to obtain
 \begin{align*}
  \|\cc(N)-\x_R\|_1
  \leq& \left\|\cc(N)-\cc_{Bn}\opt(N)\right\|_1+2\sqrt{2}\cdot\left\|\cc(N)-\cc\opt_{2Bn}(N)\right\|_1 +nu\eps \\
  \leq& 4\cdot\left\|\cc(N)-\cc_{Bn}\opt(N)\right\|_1+2Bn\eps.
 \end{align*}
\end{proof}

\section{Numerical Evaluation}
\label{sec:Numerical_Eval}

In this section we evaluate the performance of two different variants of Algorithm \ref{alg:fourierapprox} including $(i)$ the deterministic variant for block sparse functions described in \S\ref{subsec:Alg_for_Special_Props_blsp} (referred to as the \textbf{F}ourier \textbf{A}lgorithm for \textbf{S}tructured sparsi\textbf{T}y (FAST) below), and $(ii)$ a randomized implementation of Algorithm \ref{alg:fourierapprox} which only utilizes a small random subset of the $M$ hashing primes used by FAST for each choice of its parameters (referred to as the \textbf{F}ourier \textbf{A}lgorithm for \textbf{S}tructured sparsi\textbf{T}y with \textbf{R}andomization (FASTR) below).  Both of these C++ implementations are publicly available.\footnote{\url{http://na.math.uni-goettingen.de/index.php?section=gruppe&subsection=software}.}  We also compare these implementations' runtime and robustness characteristics with GFFT,\footnote{Also available at \url{http://na.math.uni-goettingen.de/index.php?section=gruppe&subsection=software}.} FFTW 3.3.4,\footnote{\url{http://www.fftw.org/}} and sFFT 2.0.\footnote{\url{https://groups.csail.mit.edu/netmit/sFFT/}}

Note that FAST and FASTR are both designed to approximate functions that are $(n,B)-$block sparse in Fourier space. This means that both FAST and FASTR take upper bounds on the number of blocks, $n$, and length of each block, $B$, present in the spectrum of the functions they aim to recover as parameters.  In contrast, both GFFT (a deterministic sparse Fourier transform \cite{segal2013improved}) and SFFT 2.0 (a randomized noise robust sparse Fourier transform \cite{HIKP}) only require an upper bound on the effective sparsity, $s$, of the function's Fourier coefficients.  Herein $s$ is always set so that $s=Bn$ for these methods.  Finally, FFTW is a highly optimized and publicly available implementation of the traditional FFT algorithm which runs in $\mathcal{O}(N \log N)$-time for input vectors of length $N$.  All the FFTW results below were obtained using FFTW 3.3.4 with its FFTW\_MEASURE plan. 

For the runtime experiments below the trial signals were formed by choosing sets of frequencies with $(n,B)-$block sparsity uniformly at random from $\intvl$.  Each frequency in this set was then assigned a magnitude $1$ Fourier coefficient with a uniformly random phase. The remaining frequencies were all set to zero.  Every data point in a figure below corresponds to an average over 100 trial runs on 100 different trial signals of this kind.  For different $n$, $B$ and $N$, the parameters in each randomized algorithm (i.e. FASTR and sFFT 2.0) were chosen so that the probability of correctly recovering an $(n,B)$-block sparse function was at least 0.9 for each run.  Finally, all experiments were run on a Linux CentOS machine with 2.50GHz CPU and 16 GB RAM. 

\subsection{Runtime as Block Length $B$ Varies: $N=2^{26}$, $n=2$ and $n=3$}

\begin{figure}[!tbp]
  \centering
  \subfloat[Runtime comparison for bandwidth $N = 2^{26}$ and $n = 2$ blocks.]{\resizebox{0.47\textwidth}{!}{\input{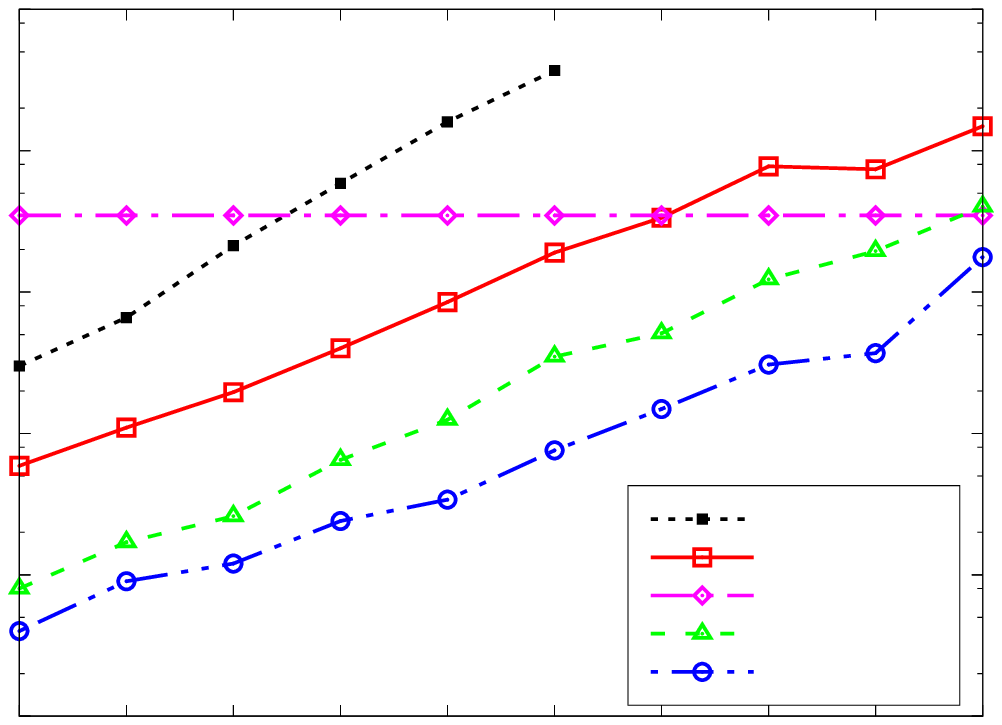}}\label{fig:n2}}
  \hfill
  \subfloat[Runtime comparison for bandwidth $N = 2^{26}$ and $n = 3$ blocks.]{\resizebox{0.47\textwidth}{!}{\input{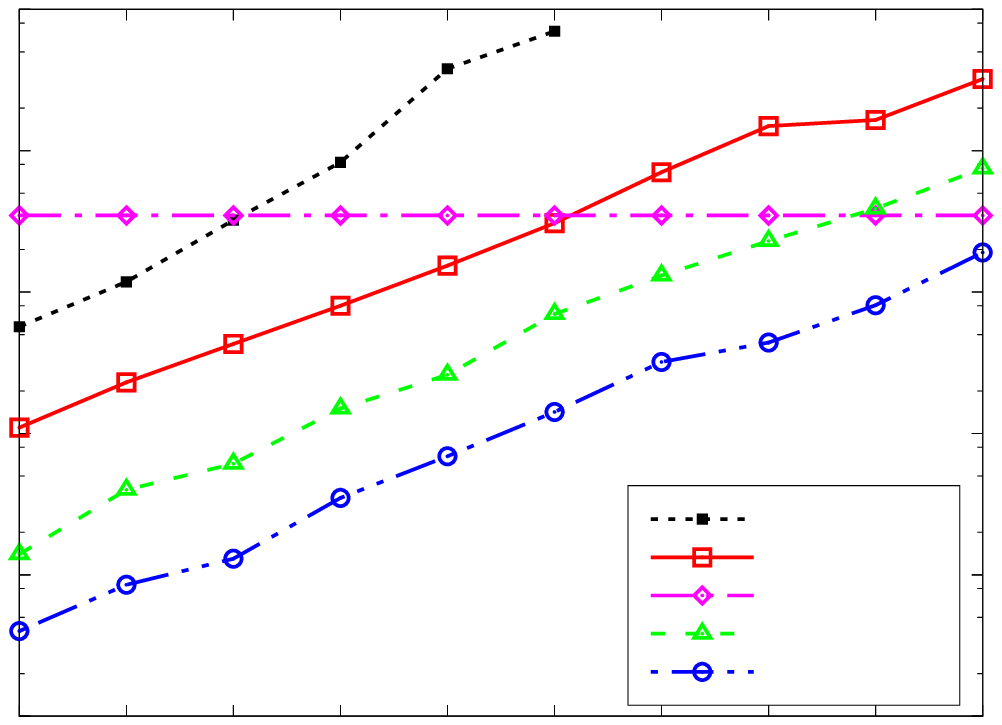}}\label{fig:n3}}
  \caption{Runtime plots for several algorithms and implementations of sparse Fourier transform for different $B$ settings}
\end{figure}

In Figure \ref{fig:n2} we fix the number of blocks to $n = 2$ and the bandwidth to $N = 2^{26}$, and then perform numerical experiments for 10 different block lengths $B$ = $2^{2}$, $2^{3}$, ..., $2^{11}$.  We then plot the runtime (averaged over 100 trial runs) for FAST, FASTR, GFFT, sFFT 2.0 and FFTW.  As expected, the runtime of FFTW is constant with increasing sparsity.  The runtimes of all the sparse Fourier transform algorithms other than GFFT are approximately linear in $B$, and they have similar slopes.  Figure \ref{fig:n2} demonstrates that allowing a small probability of incorrect recovery always lets the randomized algorithms (FASTR and sFFT 2.0) outperform the deterministic algorithms with respect to runtime. Among the deterministic algorithms, FAST is always faster than GFFT, and only becomes slower than FFTW when the value of $B$ is greater than 256.  The runtimes of both FASTR and sFFT 2.0 are still comparable with the one of FFTW when the block length $B$ is 2048. Comparing with sFFT 2.0, FASTR has better runtime performance on these block sparse functions, and is the only algorithm that is still faster than FFTW when $B=2048$.  In Figure \ref{fig:n3} we use the same settings of $N$ and $B$ as in the previous experiment and increase the number of blocks $n$ from 2 to 3.  With these settings the largest sparsity $s=Bn$ increases from 4048 ($2\cdot 2^{11}$) to 6144 ($3\cdot 2^{11}$).  The respective results for the methods are similar in this plot.

\subsection{Runtime as Number of Blocks $n$ Varies: $N = 2^{26}$ and $B = 32$}

\begin{figure}
  \centering
  \resizebox{0.6\textwidth}{!}{\input{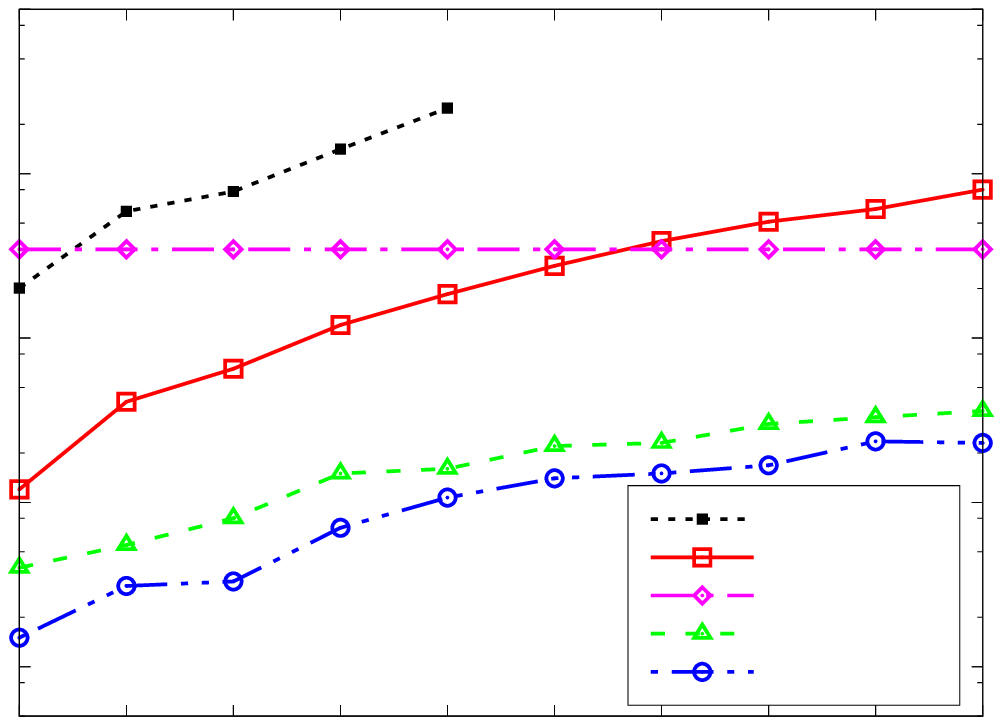}}
  \caption{Runtime comparison for bandwidth $N=2^{26}$ and block length $B=32$.}
  \label{fig:N26B32}
\end{figure}

In Figure \ref{fig:N26B32} we fix the bandwidth $N = 2^{26}$ and block length $B = 32$, then vary the number of blocks $n$ from 1 to 10.  Looking at  Figure \ref{fig:N26B32}, we can see that the deterministic sparse FFTs, GFFT and FAST, both have runtimes that increase more rapidly with $n$ than those of their randomized competitors.  Among the three deterministic algorithms, FAST has the best performance when the number of blocks is smaller than 6. Similar to the previous experiments, FFTW becomes the fastest deterministic algorithm when the sparsity $s = Bn$ gets large enough (greater than $224$ in this experiment). The two randomized algorithms are both faster than FFTW by an order of magnitude when the number of blocks is $10$. Similarly, FASTR is always faster than sFFT 2.0 for the examined value of $N$.

\subsection{Runtime as Signal Size $N$ Varies:  $n = 2$ and $B = 64$}

\begin{figure}
  \centering
    \resizebox{0.6\textwidth}{!}{\input{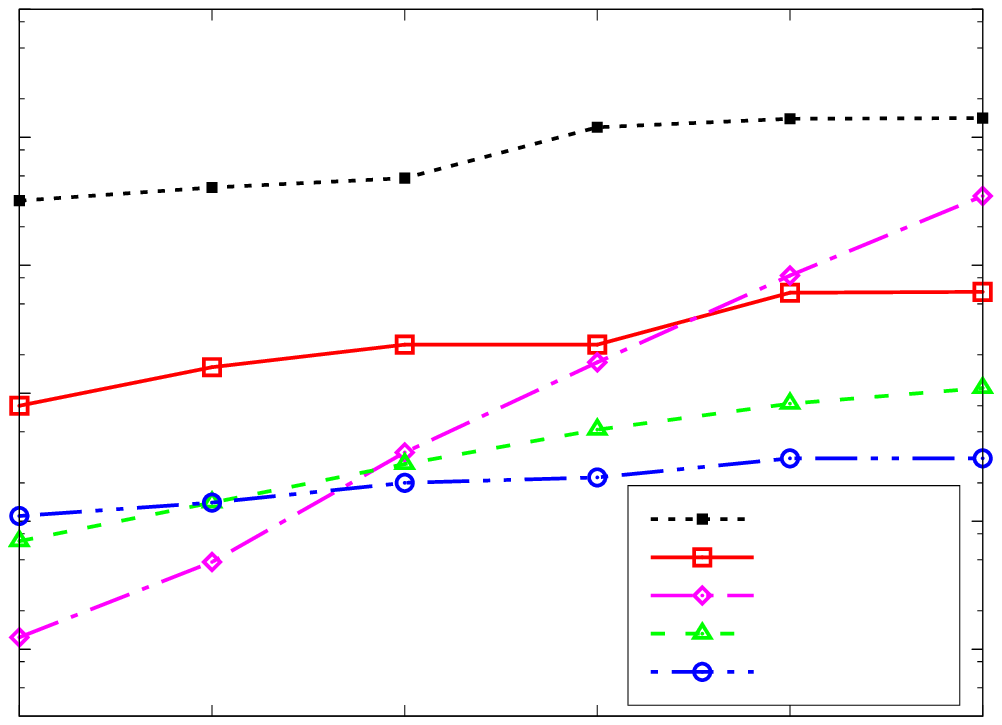}}
  \caption{Runtime comparison for $n=2$ blocks of length $B=64$.}
  \label{fig:n2B64}
\end{figure}

In Figure \ref{fig:n2B64} we fix the number of blocks $n = 2$ and block length $B=64$, then test the performance of the different algorithms with various bandwidths $N$. It can be seen in Figure \ref{fig:n2B64} that FFTW is the fastest deterministic algorithm for small bandwidth values. However, the runtime of FFTW becomes slower than the one of FAST when the bandwidth $N$ is greater than $2^{24}$. GFFT is the slowest deterministic algorithm for this sparsity level for all plotted $N$.  Comparing randomized SFT algorithms, FASTR always performs better than sFFT 2.0 when the bandwidth is greater than $2^{18}$. 

\subsection{Robustness to Noise}

To test the robustness of the methods to noise we add Gaussian noise to each of the signal samples utilized in each method and then measure the contamination of the recovered Fourier series coefficients for $(n,B)$-block sparse functions $f\colon  [0, 2 \pi] \rightarrow \mathbb{C}$ with bandwidth $N=2^{22}$, number of blocks $n=3$, and block length $B=2^{4}$.  More specifically, each method considered herein utilizes a set of samples from $f$ given by ${\f} = \left(  f(x_j) \right)^{m-1}_{j=0}$ for some $x_0, \dots, x_{m-1} \in [0, 2\pi)$ with $m \leq N$.  For the experiments in this section we instead provide each algorithm with noisy function evaluations of the form $\left(  f(x_j) + n_j\right)^{m-1}_{j=0}$,
where each $n_j \in \mathbb{C}$ is a complex Gaussian random variable with mean 0.  The $n_j$ are then rescaled so that the total additive noise $\mathbf{n} = \left( n_j \right)^{m-1}_{j=0}$ achieves the signal-to-noise ratios (SNRs) considered in Figure \ref{fig:robustness}.\footnote{The SNR is defined to be $\text{SNR}=20\log \left( \frac{\parallel {\f} \parallel_2}{\parallel \mathbf{n} \parallel_2} \right)$, where ${\f}$ and $\mathbf{n}$ are as given above.}

\begin{figure}
  \centering
    \resizebox{0.6\textwidth}{!}{\input{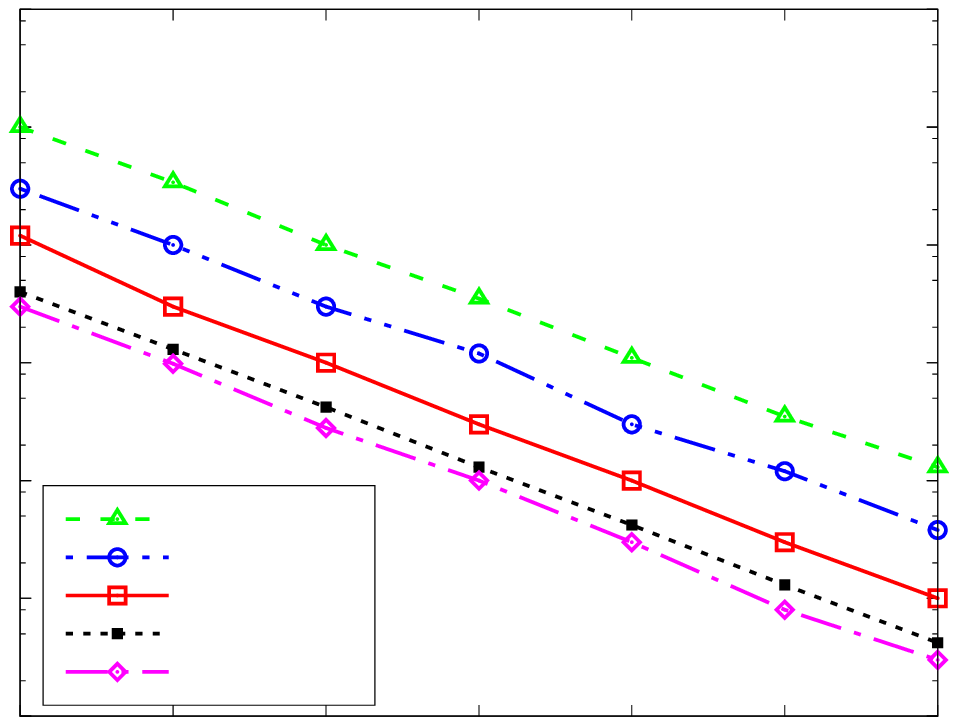}}
  \caption{Robustness to noise for bandwidth $N=2^{22}$ and $n=3$ blocks of length $B=2^4$.}
  \label{fig:robustness}
\end{figure}

Recall that the two randomized algorithms compared herein (SFT 2.0 and FASTR) are both tuned to guarantee exact recover of block sparse functions with probability at least 0.9 in all experiments.  For our noise robustness experiments this ensures that the correct frequency support, $S$, is found for at least 90 of the 100 trial signals used to generate each point plotted in Figure \ref{fig:robustness}.  All the other (deterministic) methods always find this correct support for all noise levels considered herein after sorting their output Fourier coefficient estimates by magnitude.  Figure \ref{fig:robustness} plots the average $\ell^1$-error over the true Fourier coefficients for frequencies in the correct frequency support $S$ of each trial signal, averaged over the at least 90 trial runs at each point for which each sparse Fourier transform correctly identified $S$.  More specifically, it graphs 
\[
\frac{1}{Bn}\sum_{\omega \in S} \big| c_\omega - x_\omega \big |,
\]
where $c_\omega$ are the true Fourier coefficients for frequencies $\omega \in S$, and $x_\omega$ are their recovered approximations, averaged over the at least 90 trial signals where each method correctly identified $S$.

Looking at Figure \ref{fig:robustness} one can see that all of the Fourier transform algorithms in our experiments are robust to noise. Overall, however, the deterministic algorithms (FAST, GFFT and FFTW) are more robust than randomized algorithms (FASTR and sFFT 2.0).  As expected, FFTW is the most robust algorithm in this experiment, followed closely by GFFT.  For the randomized algorithms, FASTR is more robust than sFFT 2.0.

\section{Conclusion}
\label{sec:Conclusion}

In this paper we developed the fastest known deterministic SFT method for the recovery of polynomially structured sparse input functions.  However, there are still some remaining avenues for future research. To begin with one could try to find other types of structured sparsity that also guarantee an upper bound on the sparsity of the frequency restrictions for all possible residues. Considering a structure generated by polynomials was merely the most obvious choice, as polynomials naturally agree well with hashing modulo prime numbers and therefore interact well with the the number theoretic constructions used herein.
One could also investigate whether utilizing structured sparsity might actually improve the runtimes of existing randomized SFT algorithms for unstructured sparsity. 

It would also be interesting to know whether the results presented herein can be transferred to the non-periodic, continuous case, i.e., to  sparse functions defined on the whole real line. Results in \cite{BCGLS} about porting randomized SFT algorithms to the continuous setting suggest that this should be possible for a randomized version of Algorithm \ref{alg:fourierapprox}.

\section*{Acknowledgements}  

Sina Bittens was supported in part by the DFG in the framework of the GRK 2088.  Mark Iwen and Ruochuan Zhang were both supported in part by NSF DMS-1416752.  The authors would also like to thank both Felix Krahmer for introducing them at TUM in the summer of 2016, as well as Gerlind Plonka for her ongoing support, and particularly for her generosity in providing resources that aided in the writing of this paper.

\bibliographystyle{abbrv}
\bibliography{SFFTrefs.bib}
\newpage



\end{document}